\newcommand{\bbD}{{\mathbbm{D}}}
\newcommand{\bbP}{{\mathbbm{P}}}
\newcommand{\bbQ}{{\mathbbm{Q}}}
\newcommand{\T}{{\mathbbm{T}}}
\newcommand{\bbR}{{\mathbbm{R}}}
\newcommand{\bbZ}{{\mathbbm{Z}}}
\newcommand{\SL}{{\mathrm{SL}}}
\newcommand{\tr}{{\mathrm{Tr}}}
\renewcommand{\Im}{\operatorname{Im}}
\renewcommand{\Re}{\operatorname{Re}}
  \DeclareMathOperator\supp{supp}
\newtheorem{theorem}{Theorem}[section]
\newtheorem{prop}[theorem]{Proposition}
\newtheorem{coro}[theorem]{Corollary}
\newtheorem{lemma}[theorem]{Lemma}
\theoremstyle{definition}
\newtheorem{definition}[theorem]{Definition}
\newtheorem{remark}[theorem]{Remark}
\definecolor{purple}{rgb}{.5,0,1}
\definecolor{orange}{rgb}{1,.5,0}
\definecolor{green}{rgb}{0,.4,0}
\numberwithin{equation}{section}
\title[Opening Gaps in the Spectrum of Strictly Ergodic Jacobi and CMV Matrices]{Opening Gaps in the Spectrum of Strictly Ergodic\\Jacobi and CMV Matrices}
\author{David Damanik}
\address{Department of Mathematics, Rice University, Houston, Texas, 77005}
\email{damanik@rice.edu}
\thanks{D.D.\ was supported in part by NSF grant DMS--2054752}
\author{Long Li}
\address{Department of Mathematics, Rice University, Houston, Texas, 77005}
\email{longli@rice.edu}
\begin{document}
\maketitle

\begin{abstract}
We prove that dynamically defined Jacobi and CMV matrices associated with generic continuous sampling functions have all gaps predicted by the Gap Labelling Theorem open. We also give a mechanism for generic gap opening for quasi-periodic analytic sampling functions in the subcritical region following from the analyticity of resonance tongue boundaries for both Jacobi and CMV matrices.
\end{abstract}

\setcounter{tocdepth}{1}

\tableofcontents

\section{Introduction} 

Given a dynamically defined family $\{H_\omega\}_{\omega \in \Omega}$ of self-adjoint (or unitary) operators and an ergodic probability measure $\mu$, the spectrum turns out to be almost surely equal to the same set $\Sigma$. A result of this nature is typically shown for certain classes of operators, defined on suitable Hilbert spaces. Specifically, the standard textbook treatments \cite{CL90, CFKS, DF22, PF92} focus on the case of Schr\"odinger operators on $\ell^2(\bbZ^d)$ or $L^2(\bbR^d)$, but the same proof works in more general settings.

Fixing an ergodic measure $\mu$, it is of course of interest to study the structure of the set $\Sigma$. It is convenient to view $\Sigma$ as obtained by removing its interior gaps from its convex hull:
\begin{equation}\label{e.gaps}
\Sigma = \mathrm{ConvHull}(\Sigma) \setminus \bigcup_{j \in J} (g_j^{(-)}, g_j^{(+)}).
\end{equation}

It is a crucial observation that the underlying dynamics may impose restrictions on the structure of these interior gaps. In order to exhibit this phenomenon, one first observes that there is a natural labelling of the gaps and then proceeds to show that these labels are confined to a suitable (at most) countable set $\mathcal{L}$ that is completely determined by the dynamics and must be the same for all operator families that can be generated by the dynamics.

For definiteness let us discuss the case where the operators in question are bounded self-adjoint operators on $\ell^2(\bbZ^d)$ and the dynamics is given by a $\bbZ^d$-action of homeomorphisms on a compact metric space $\Omega$. The compatibility condition is phrased in the terms of the covariance property
\begin{equation}\label{e.covariance}
H_{T^n \omega} = U_n^* H_\omega U_n,
\end{equation}
where $n = (n_1,\ldots,n_d) \in \bbZ^d$ and $U_n$ is the unitary map on $\ell^2(\bbZ^d)$ induced by the translation by $n$ on $\bbZ^d$. The \emph{density of states measure} (DSM) $dk$ is given by the $\mu$-average over $\omega$ of the spectral measure corresponding to the pair $(H_\omega,\delta_0)$,
\begin{equation}\label{e.dos}
\int g \, dk = \int_\Omega \langle \delta_0, g(H_\omega) \delta_0 \rangle \, d\mu(\omega),
\end{equation}
and the \emph{integrated density of states} (IDS) is its accumulation function,
\begin{equation}\label{e.ids}
k(E) = \int \chi_{(-\infty,E]} \, dk .
\end{equation}
By construction, $k$ is a non-decreasing function of $E \in \bbR$ with range $[0,1]$. Moreover, under suitable locality conditions, which will be satisfied for the operators considered in this paper, it can be shown that $k$ is continuous.

It turns out that the almost sure spectrum is given by the set of growth points of the IDS. In other words, it is equal to the topological support of the DSM,
\begin{equation}\label{e.dsmspec}
\mathrm{supp} (dk) = \Sigma = \sigma(H_\omega) \; \text{ for $\mu$-a.e. } \omega \in \Omega.
\end{equation}

This addresses the first issue: the gaps can be labeled by the constant value the IDS takes on each of them. That is, by \eqref{e.dsmspec}, $k$ is constant on each $(g_j^{(-)}, g_j^{(+)})$ and hence one can assign its value to the gap, and again by  \eqref{e.dsmspec}, the values corresponding to different gaps are different. 

The second issue is addressed by showing that each gap label, given by the value of the IDS, must belong to a certain subgroup of $\bbR$ that is at most countable. 

In the general setting, such a countable subgroup of $\bbR$ is given by the range of the normalized trace induced by the ergodic measure, applied to the $K_0$-group associated with the $C^*$-algebra generated by the base dynamics. Let us denote this group by
\begin{equation}\label{e.ellk}
\mathcal{L}_K = \tau^{(\mu)}_*(K_0(C^*(\Omega,T))).
\end{equation}
Thus, for each spectral parameter $E$ in an interior gap $(g_j^{(-)}, g_j^{(+)})$, the value of the IDS at that point must obey $k(E) \in \tau^{(\mu)}_*(K_0(C^*(\Omega,T))) \cap (0,1)$; compare \cite{Be86, Be92}.

In the case $d = 1$, and again assuming suitable locality conditions, there is a second approach. One-dimensionality is used to connect the IDS to the rotation number via Sturm oscillation theory. The latter can then be conveniently determined via the existence of stable and unstable sections for the transfer matrix cocycle for all $E$ outside $\Sigma$. This in turn shows that the rotation number, and hence the IDS, must take values in the range of the Schwartzman homomorphism, and one can use the following label set,
\begin{equation}\label{e.ells}
\mathcal{L}_S = \mathcal{A}^{(\mu)}(C^\#(\tilde \Omega, \tilde T)).
\end{equation}
Here, $(\tilde \Omega, \tilde T)$ denotes the suspension of the dynamics, $C^\#(\tilde \Omega, \tilde T)$ denotes the countable set of homotopy equivalence classes of continuous maps $\phi : \tilde \Omega \to \T$, and $\mathcal{A}^{(\mu)}$ denotes the Schwartzman homomorphism that associates the rotation number with such a homotopy class. See \cite{DF22, DF23b, J86, JONNF} for more details.

Given these general gap labelling theorems, the following questions arise naturally:
\begin{itemize}

\item[(Q1)] When $d = 1$, do we always have $\mathcal{L}_K = \mathcal{L}_S$?

\item[(Q2)] Considering one of the label sets $\mathcal{L} \in \{ \mathcal{L}_K, \mathcal{L}_S \}$ (whenever it is defined), is it a minimal set of labels? That is, is each $\ell \in \mathcal{L} \cap (0,1)$ the label of some interior gap for some operator family with the given $(\Omega,T,\mu)$?

\end{itemize}

Note that as soon as a label does arise for some operator family, it does so for an open set of operator families, as gap boundaries vary continuously. Thus, as the set of labels is countable, genericity issues come down to answering the following question:
\begin{itemize}

\item[(Q3)] If a label $\ell \in \mathcal{L} \cap (0,1)$ is assumed on some interior gap for some operator family, is the set of operator families for which this happens dense?

\end{itemize}

Questions Q2 and Q3 are wide open when $d > 1$, but there are results when $d = 1$. In the latter case, Q1 then becomes interesting as well, and there are some results in that direction as well; compare \cite{PV80, PV80a}.

For these reasons, let us now restrict our attention to the case $d = 1$. Regarding Q1, there is no general result, but there is also no counterexample. That is, $\mathcal{L}_K = \mathcal{L}_S$ has been shown for certain classes of $(\Omega,T,\mu)$, namely by computing both label sets explicitly and observing that they are equal, and there is no known example for which $\mathcal{L}_K \not= \mathcal{L}_S$.

Some of the existing results regarding Q2 and Q3 answer both questions in one fell swoop. Indeed, in order to show that a gap with a given label $\ell \in \mathcal{L} \cap (0,1)$ exists, one starts with a reference operator family, considers an energy $E$ for which $k(E) = \ell$ (which must exist since $k$ is a continuous function with range $[0,1]$) and shows that a suitable small perturbation of the operator family has an open gap (near $E$) with that label. This shows existence, and if one can start with any reference operator family, also denseness. We would call results of this nature \emph{global}.

On the other hand, there are also some results that apply only in the vicinity of a special reference model, and such results are naturally called \emph{local}.

Finally, there are some results that are in between these two cases --- they apply to reference models that belong to a large subset of the space of all models, but not the full space. These will be referred to as \emph{semi-global}.

In this paper we will be interested in establishing global and semi-global results. The general point we wish to make is that it is essential to complement gap labelling theorems with ranges of applicability. The scarcity of the existing global and semi-global results naturally motivates the desire and effort to establish additional ones.

In terms of global results, we start our discussion with a result of Avila-Bochi-Damanik \cite{ABD12} that shows open and dense gap opening for all $\ell \in \mathcal{L}_S$ for Schr\"odinger operators in $\ell^2(\bbZ)$ defined by continuous sampling functions over strictly ergodic base dynamics with a finite-dimensional factor. We prove the analog of this result for Jacobi matrices and extended CMV matrices in $\ell^2(\bbZ)$. The interest in such a result stems from the fact that the three classes of operators that are now covered (Schr\"odinger, Jacobi, CMV) are precisely those classes of self-adjoint and unitary dynamically defined operators in $\ell^2(\bbZ)$ that are of broadest appeal in mathematical physics, spectral theory, and approximation theory. Especially from an inverse spectral theory perspective, the extension from Schr\"odinger to Jacobi and CMV is of fundamental interest.

As a result of semi-global nature, we investigate gap opening for analytic quasi-periodic one-frequency Jacobi and (extended) CMV operators in the subcritical region. Avila's global theory partitions the portion of the energy axis/circle that corresponds to the spectrum into three regions (via the associated cocycle behavior) -- subcritical, critical, and supercritical -- and shows that the critical region is typically empty. There is work by Goldstein and Schlag on gap opening in the supercritical region \cite{GS11}, and we complement this here by addressing the subcritical region.

The organization of the paper is as follows. In Section~\ref{sec.results} we describe our main results in detail. Then we collect in Section~\ref{sec.pre} some basic definitions and known results that we need in the proofs of these results. In particular, we present \cite[Lemma 10]{ABD09}  and \cite[Theorem 3]{ABD12} in Lemma~\ref{lem.ToLocal} and Lemma~\ref{thm.AUH} as $\chi$-valued versions, where $\chi$ can be either $\mathrm{SL}(2,\bbR)$ or $\mathrm{SU}(1,1)$. This allows us to essentially reduce our global results to suitable versions of the projection lemma for Jacobi and CMV matrices. We prove the projection lemma for CMV matrices in Section~\ref{CMVProject} and for Jacobi matrices in Section~\ref{JacobiProject}. The reason for this arrangement is that the proof in the CMV case is more difficult and interesting in itself, and requires new ideas, whereas the extension from the Schr\"odinger to the Jacobi case is comparatively more straightforward. In Section~\ref{mainProof} we use the projection lemmas to  prove our pair of global results, Theorem~\ref{thm.main1} and Theorem~\ref{thm.main3}. Finally, Section~\ref{analyticCase} addresses our pair of semi-global results for subcritical quasi-periodic Jacobi and CMV operators, Theorem~\ref{thm.main7} and Theorem~\ref{thm.main8}.

\section{The Setting and the Main Results}\label{sec.results}

In this section we describe the setting in which we work and the results we will prove. Specifically, we need to specify requirements on the base dynamics $T : \Omega \to \Omega$ and the operators on $\ell^2(\bbZ)$ we generate via iterations of $T$.

\subsection{The Base Dynamics}

We assume that $\Omega$ is a compact metric space and  $T : \Omega \to \Omega$ is a strictly ergodic homeomorphism with respect to an invariant probability measure $\mu$. We assume furthermore that $T$ has a non-periodic finite-dimensional factor \footnote{That is, there is a homeomorphism $T' : \Omega' \to \Omega'$, where $\Omega'$ is an infinite compact subset of some Euclidean space $\bbR^d$, and there is an onto continuous map $h: \Omega \to \Omega'$ such that $h \circ T = T' \circ h.$ }.  In particular, this setting includes the following extensively studied cases:
\begin{itemize}
\item  minimal translations $\omega \mapsto \omega + \alpha$ of the $d$-torus $\T^d$ for any $d\geq 1$;
\item the skew-shift $(\omega_1,\omega_2) \to(\omega_1 + \alpha,\omega_1+\omega_2)$ on $\T^2$, where $\alpha$ is irrational.
\end{itemize}

\subsection{The Operators}

The two classes of operators of interest are given by Jacobi matrices and extended CMV matrices with coefficients generated by the base dynamics $T : \Omega \to \Omega$.

Beginning with the Jacobi case, let $a \in C(\Omega,\bbR_+)$, $b\in C(\Omega,\bbR)$, and set
\begin{equation}\label{eq.JacobiC}
a_n = a(T^n(\omega)), \quad b_n=b(T^n (\omega)). 
\end{equation}
These coefficients define a two-sided Jacobi matrix
$$J=\left(\begin{matrix}\cdots&\cdots&\cdots&\cdots&\cdots&\cdots&\cdots\\
\cdots&a_0 &b_1&a_1&0&\cdots&\cdots\\
\cdots&0&a_1&b_2&a_2&0&\cdots\\
\cdots&0&0&a_2&b_3&a_3&\cdots\\
\cdots&\cdots&\cdots&\cdots&\cdots&\cdots&\cdots\end{matrix}\right),$$
acting on $\ell^2(\bbZ)$ as a bounded self-adjoint operator. As $T$ is strictly ergodic, the spectrum is independent of $\omega \in \Omega$ and may be denoted by $\Sigma$. Whenever we want to emphasize the dependence of $\Sigma$ on the sampling functions, we will write $\Sigma_{a,b}$.

Observe that a Jacobi matrix takes the form of a standard Schr\"odinger operator on $\ell^2(\bbZ)$ in the special case $a(\omega) \equiv 1$. Aside from being more general than Schr\"odinger operators, Jacobi matrices are the more natural objects in inverse spectral theory considerations because there is a nice and well-understood bijection between various explicit classes of (one-sided) Jacobi and explicit classes of probability measures on the real line. Via the spectral theorem it then follows that one-sided Jacobi matrices are the natural building blocks of general self-adjoint operators. Two-sided Jacobi matrices are the more natural object in the dynmically defined setting and much of their analysis goes through the study of the one-sided case anyway. See \cite{Simon3} for more details.

Turning now to the CMV case, denote the open unit disc in the complex plane by $\bbD$, that is, $\bbD=\{z\in\mathbb{C}:|z|<1\}$. Given $v \in C(\Omega, \bbD)$, we can set
\begin{equation}\label{eq.VerblunskyC}
\alpha_n=v(T^{n}(\omega)) \quad \omega \in \Omega
\end{equation}
and view them as the Verblunsky coefficients defining an extended CMV matrix
\begin{equation*}
\mathcal{E}=\left(
\begin{matrix}
\cdots&\cdots&\cdots&\cdots&\cdots&\cdots&\cdots\\
\cdots&-\overline{\alpha}_0\alpha_{-1}&\overline{\alpha}_1\rho_{0}&\rho_1\rho_0&0&0&\cdots&\\
\cdots&-\rho_0\alpha_{-1}&-\overline{\alpha}_1\alpha_{0}&-\rho_1\alpha_0&0&0&\cdots&\\
\cdots&0&\overline{\alpha}_2\rho_{1}&-\overline{\alpha}_2\alpha_{1}&\overline{\alpha}_3\rho_2&\rho_3\rho_2&\cdots&\\
\cdots&0&\rho_2\rho_{1}&-\rho_2\alpha_{1}&-\overline{\alpha}_3\alpha_2&-\rho_3\alpha_2&\cdots&\\
\cdots&0&0&0&\overline{\alpha}_4\rho_3&-\overline{\alpha}_4\alpha_3&\cdots&\\
\cdots&\cdots&\cdots&\cdots&\cdots&\cdots&\cdots
\end{matrix}\right),
\end{equation*}
where $\rho_n=\sqrt{1-|\alpha_n|^2}$. As before, the strict ergodicity of $T$ ensures that the spectrum is independent of $\omega \in \Omega$ and may be denoted by $\Sigma$. Whenever we want to emphasize the dependence of $\Sigma$ on the sampling function, we will write $\Sigma_{v}$.

Comments similar to the Jacobi case apply. There is a bijection between one-sided CMV matrices and probability measures on the unit circle. Again by the spectral theorem, a general unitary operator can then be written as a direct sum of operators that are (unitarily equivalent to) one-sided CMV matrices. The two-sided case is again more natural in the dynamical setting. See \cite{Simon1, Simon2} for more details.

In summary, the two classes of operators we consider are canonical in the self-adjoint and unitary settings, respectively. Our focus is on the case of dynamically defined coefficients and the structure of their spectrum.

\subsection{Gap Labelling via the Schwartzman Homomorphism}

Gap labelling via the Schwartzman homomorphism rests on three pillars. First, the solutions to the generalized eigenvalue equation for a dynamically defined Jacobi or CMV matrix can be described via a cocycle. Second, Johnson's theorem states that the complement of $\Sigma$ is characterized by the uniform hyperbolicity of the associated cocycle. This results in the existence of continuous (unstable and stable) sections for the cocycle. Third, the Schwartzman homomorphism applied to (a continuous interpolation of) such a continuous section then determines the rotation number, which is directly related to the value of the IDS via oscillation theory.

Let us make this explicit in the cases at hand. In the Jacobi case, the generalized eigenvalue equation is given by
$$
(Ju)_n = a_n u_{n+1} + b_n u_n + a_{n-1}u_{n-1} = Eu_n
$$
for a spectral parameter (energy) $E \in \bbR$. A sequence $\{u_n\}$ is a solution if and only if it satisfies 
\begin{equation}\label{eq.transfer}
\begin{bmatrix}u_{n+1}\\a_nu_{n}\end{bmatrix}=\frac{1}{a_n}\begin{bmatrix}E-b_n&-1\\a_n^2&0\end{bmatrix}\begin{bmatrix}u_n\\a_{n-1}u_{n-1}\end{bmatrix}.
\end{equation}
Since the coefficients $\{ a_n, b_n \}$ are dynamically defined via \eqref{eq.JacobiC}, the transfer matrix appearing in \eqref{eq.transfer} is as well, namely via the $\mathrm{SL}(2,\bbR)$-cocycle
\begin{equation}\label{eq.JacobiCocycle}
(T,A_{a,b}) : \Omega \times \bbR^2 \to \Omega \times \bbR^2 , (\omega , v) \mapsto \Big( T(\omega), A_{a,b}^{(E)}(\omega) v \Big)
\end{equation}
with
\begin{equation}\label{eq.JacobiMap}
A_{a,b}^{(E)}(\omega) = \frac{1}{a(\omega)}\begin{bmatrix} E - b(\omega) & -1 \\ a(\omega)^2 & 0 \end{bmatrix}.
\end{equation}
Since $A_{a,b}^{(E)}(\omega) \in \mathrm{SL}(2,\bbR)$, one may projectivize \eqref{eq.JacobiCocycle} and consider instead
\begin{equation}\label{eq.JacobiCocycleProj}
(T,A_{a,b}) : \Omega \times \bbR\bbP^1 \to \Omega \times \bbR\bbP^1 , (\omega , [v]) \mapsto \Big( T(\omega), [A_{a,b}^{(E)}(\omega) v] \Big).
\end{equation}
Johnson's theorem now states that for every $E \in \bbR \setminus \Sigma$, there is a continuous invariant section for \eqref{eq.JacobiCocycleProj}. Since \eqref{eq.JacobiMap} is homotopic to the identity, we can extend \eqref{eq.JacobiCocycle} to a continuous flow on the mapping torus $(\tilde \Omega, \tilde T)$ (a.k.a.\ suspension) of $(\Omega,T)$, and the same is true for the continuous invariant section. Thus, we obtain an element of $C(\tilde \Omega,\T)$ if we identify $\bbR\bbP^1$ and $\T$ in the natural way. This in turn allows us to apply the Schwartzman homomorphism to its homotopy class and obtain the rotation number $\rho(E)$, which by Sturm oscillation theory is equal to $1 - k(E)$; compare \cite{DFZ23}.


\medskip

Turning to the CMV case, denote by $z\in\partial\bbD=\{z\in\mathbb{C}:|z|=1\}$ the spectral parameter and consider the generalized eigenvalue equation 
$$
\mathcal{E}u=zu.
$$
Then solutions can be described in terms of the {\it normalized Szeg\H{o} cocycle map} given by  
\begin{equation}\label{eq.SzegoMap}
S_v^{(z)}(\omega) = \frac{z^{-\frac{1}{2}}}{\sqrt{1-|v(\omega)|^2}} \begin{bmatrix} z & -\overline{v(\omega)} \\ -v(\omega) & 1 \end{bmatrix},
\end{equation}
which takes values in $\mathrm{SU}(1,1)$. Recalling that 
\begin{equation}\label{eq.su11sl2r}
M^{-1} \mathrm{SU}(1,1) M = \mathrm{SL}(2,\bbR) \text{ with } M = \frac{1}{1+i}\begin{bmatrix}1&-i\\1&i\end{bmatrix},
\end{equation}
we can again view the transfer matrices as being generated by an $\mathrm{SL}(2,\bbR)$-valued cocycle, which allows us to proceed as above with defining the associated rotation number $\rho(z)$; compare \cite{GJ96}.

The upshot is that in both cases, gaps of $\Sigma$ can be labeled with elements of the range $\mathcal{L}_S$ of the Schwartzman homomorphism, intersected with the range of the rotation number/IDS.

\subsection{The Main Results}

In this subsection we state our main results. We begin with global results in the continuous category for which we assume the base dynamics to be as above.

\begin{theorem}[Gap opening for Jacobi matrices]\label{thm.main1}
For every $a\in C(\Omega,\bbR_+)$, $b\in C(X,\bbR)$, and $\ell\in\mathcal{L}_S \cap (0,1)$, there exists a continuous path $b_t\in C(\Omega,\bbR), 0\leq t\leq 1$ with $b_0=b$ such that for every $t>0$, the spectrum $\Sigma_{a,b_t}$ has an open gap with label $\ell.$
\end{theorem}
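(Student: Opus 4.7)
The plan is to adapt the Avila--Bochi--Damanik framework from \cite{ABD12} to the Jacobi setting, with the projection lemma from Section~\ref{JacobiProject} as the crucial new input. First, since gap openness is a $C^0$-open condition on $b$, it suffices to produce, for every $\varepsilon > 0$, a $b'\in C(\Omega,\bbR)$ with $\|b' - b\|_\infty < \varepsilon$ such that $\Sigma_{a,b'}$ has an open gap labelled $\ell$: from a sequence $b_n \to b$ with this property, connected by short straight-line segments inside the open sets where the label-$\ell$ gap stays open, one extracts the desired continuous path $b_t$. (If $\Sigma_{a,b}$ already has an open gap with label $\ell$, any small perturbation along a transverse direction works.)

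To produce such a $b'$, use continuity and surjectivity of the IDS to choose an energy $E_0$ with $k(E_0) = \ell$, equivalently with rotation number $1-\ell$ for the Jacobi cocycle $A_{a,b}^{(E_0)}$ defined in \eqref{eq.JacobiMap}. This rotation number corresponds, via the Schwartzman homomorphism and the suspension construction recalled in Section~\ref{sec.results}, to a distinguished homotopy class on $\tilde\Omega$. The $\mathrm{SL}(2,\bbR)$-valued versions of the two reduction results, Lemma~\ref{lem.ToLocal} and Lemma~\ref{thm.AUH}, together reduce the existence of a nearby $b'$ whose Jacobi cocycle at $E_0$ is uniformly hyperbolic with the prescribed Schwartzman class to an abstract density question in $C(\Omega,\mathrm{SL}(2,\bbR))$: arbitrarily $C^0$-close to the unperturbed cocycle, uniformly hyperbolic cocycles in the correct homotopy class exist freely.

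The bridge between this abstract density statement and the restricted class of perturbations arising from varying $b$ alone is the Jacobi projection lemma. It guarantees that the image of the map $b' \mapsto A_{a,b'}^{(E_0)}$ is $C^0$-dense (modulo the Schwartzman constraint) in a neighborhood of $A_{a,b}^{(E_0)}$ inside $C(\Omega,\mathrm{SL}(2,\bbR))$, so that the abstract perturbation provided by Lemma~\ref{thm.AUH} can be approximated by a genuine Jacobi cocycle. Once the projection lemma is in hand, a diagonal argument yields $b'$ with $\|b'-b\|_\infty < \varepsilon$ such that $A_{a,b'}^{(E_0)}$ is uniformly hyperbolic (openness of uniform hyperbolicity preserves the property under small further error), so by Johnson's theorem $E_0\notin \Sigma_{a,b'}$, and by the Schwartzman identification the gap containing $E_0$ has label exactly $\ell$.

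The main obstacle is the projection lemma itself. Perturbing only $b$, while keeping $a$ fixed, produces only a one-parameter perturbation of the $(1,1)$-entry $E_0 - b(\omega)$ inside the matrix $\frac{1}{a(\omega)}\begin{bmatrix} E_0 - b(\omega) & -1 \\ a(\omega)^2 & 0 \end{bmatrix}$. Unlike the Schr\"odinger case, where $a\equiv 1$ removes the weighting, the normalization $1/a(\omega)$ and the off-diagonal factor $a(\omega)^2$ couple the allowed perturbation to the fixed sampling function $a$, and one has to verify that the resulting restricted direction in $C(\Omega,\mathrm{SL}(2,\bbR))$ is still rich enough to realize the perturbations produced by the $\chi$-valued Lemma~\ref{thm.AUH}. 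The authors indicate that this Jacobi projection lemma is comparatively more tractable than its CMV analogue in Section~\ref{CMVProject}, but it is where the bulk of the technical work specific to Theorem~\ref{thm.main1} is concentrated.
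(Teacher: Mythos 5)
Your overall architecture matches the paper's: pick $E_0$ with $k(E_0)=\ell$, apply the accessibility lemma (Lemma~\ref{thm.AUH}) to the Jacobi cocycle at $E_0$ to get a path into the uniformly hyperbolic region preserving the rotation number, and then use the Jacobi projection lemma to return to cocycles of Jacobi form. However, your description of the mechanism of the projection lemma is not correct, and this is where the argument as you wrote it breaks down. You claim Lemma~\ref{lem.JprojectionLem} ``guarantees that the image of the map $b' \mapsto A_{a,b'}^{(E_0)}$ is $C^0$-dense in a neighborhood of $A_{a,b}^{(E_0)}$ inside $C(\Omega,\mathrm{SL}(2,\bbR))$.'' That statement is false: the set of $\mathcal{J}$-valued cocycles with the off-diagonal entries pinned by the fixed $a$ is a thin (infinite-codimension) subset of $C(\Omega,\mathrm{SL}(2,\bbR))$, and no density holds. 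What the projection lemma actually provides is a pair of continuous maps $(\Phi,\Psi)$ defined on a neighborhood $\mathcal W$ of $A$ such that every $B\in\mathcal W$ is \emph{conjugated}, via $\Psi(B)$ close to the identity, to $\Phi(B)\in C(\Omega,\mathcal J)$. Conjugation by a map homotopic to the identity preserves the rotation number exactly, and uniform hyperbolicity is a conjugacy invariant. So no approximation step, no ``diagonal argument,'' and no appeal to openness of $\mathcal{UH}$ under small further error is needed: $\tilde A_t := \Phi(A_t)$ is \emph{exactly} uniformly hyperbolic for $t>0$ with the prescribed rotation number.

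This also streamlines your construction of the path. You propose to produce a sequence $b_n\to b$ and stitch it together by straight-line segments lying in the open sets where the label-$\ell$ gap stays open; that requires an additional argument to guarantee the segments can be chosen inside those open sets uniformly. The paper avoids this entirely: Lemma~\ref{thm.AUH} already produces a \emph{continuous} path $A_t$, $\Phi$ is continuous, so $\tilde A_t=\Phi(A_t)$ is a continuous path in $C(\Omega,\mathcal J)$ for $t$ small, and since elements of $\mathcal J$ with the given $a$ are determined by the single scalar in the $(1,1)$-entry, one reads off a continuous path $b_t\in C(\Omega,\bbR)$ with $b_0=b$ directly from $\tilde A_t = \frac{1}{a}\begin{bmatrix}E_0-b_t&-1\\a^2&0\end{bmatrix}$. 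For $t>0$, $\tilde A_t\in\mathcal{UH}$ so $E_0\notin\Sigma_{a,b_t}$ by Johnson's theorem, and the preserved rotation number identifies the label as $\ell$.

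One further note: you offhandedly handle the case where $\Sigma_{a,b}$ already has an open gap labelled $\ell$ by ``any small perturbation along a transverse direction.'' But the theorem asks only for \emph{a} path that keeps the gap open for $t>0$; if the gap is already open, the constant path $b_t\equiv b$ works, so nothing needs to be said. The substantive case is the collapsed gap, which is the one the paper treats.
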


This implies that for every $a\in C(\Omega,\bbR_+)$ and every $\ell\in\mathcal{L}_S \cap (0,1)$, the set of $b\in C(\Omega,\bbR)$ such that $\Sigma_{a,b}$ has an open gap with label $\ell$ is open and dense. Since $\mathcal{L}_S$ is countable, we have the following as a direct consequence:
\begin{coro}\label{coro.main2}
For every $a\in C(\Omega,\bbR_+)$, there is a dense $G_\delta$ set of $b\in C(\Omega,\bbR)$ such that $\Sigma_{a,b}$ has all gaps allowed by the Gap Labelling Theorem open.
\end{coro}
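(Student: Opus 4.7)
The plan is to deduce Corollary~\ref{coro.main2} from Theorem~\ref{thm.main1} by a standard Baire category argument, so the work is entirely in packaging the density result together with an openness observation and a countability count.

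For a fixed $a\in C(\Omega,\bbR_+)$ and each $\ell\in\mathcal{L}_S\cap(0,1)$, I would introduce
\[
U_\ell=\set{b\in C(\Omega,\bbR):\Sigma_{a,b}\text{ has an open gap with label }\ell}
\]
and show that $U_\ell$ is open and dense in $C(\Omega,\bbR)$ (equipped with the uniform norm). Density is immediate from Theorem~\ref{thm.main1}: starting from any $b_0\in C(\Omega,\bbR)$, the theorem produces a path $b_t$ with $b_t\to b_0$ as $t\to 0^+$ and $b_t\in U_\ell$ for all $t>0$, so $b_0\in\overline{U_\ell}$.

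Openness is the only step that requires invoking anything beyond Theorem~\ref{thm.main1}. Here I would use that the spectrum $\Sigma_{a,b}$ depends continuously on $b$ in the Hausdorff metric (a standard consequence of norm-continuity of $J$ in the coefficients together with uniform upper semi-continuity of the spectrum for self-adjoint operators), and that the IDS $k$ is continuous in both the energy and the coefficients. Consequently, the endpoints $g_j^{(\pm)}$ of any gap of $\Sigma_{a,b}$ depend continuously on $b$, while the label $\ell=k(g_j^{(\pm)})$ is locally constant in $b$ because $\ell\in\mathcal{L}_S\cap(0,1)$ is isolated in the sense that nearby allowed labels cannot suddenly coincide with it under small perturbation (continuity of the IDS forces the label of a continuously varying gap to vary continuously inside a countable set, hence to remain constant). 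Thus, if $b\in U_\ell$, then every sufficiently close $b'\in C(\Omega,\bbR)$ is also in $U_\ell$. This is the step I would expect to be the main technical obstacle, though it is known in the literature --- it is exactly the remark made in the introduction that ``as soon as a label does arise for some operator family, it does so for an open set of operator families.''

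With openness and density in hand, I would note that $\mathcal{L}_S$ is at most countable (by definition of the Schwartzman label set, being the image under $\mathcal{A}^{(\mu)}$ of the countable set $C^\#(\tilde\Omega,\tilde T)$), so $\mathcal{L}_S\cap(0,1)$ is countable. Applying the Baire category theorem in the complete metric space $C(\Omega,\bbR)$ to the countable family $\{U_\ell\}_{\ell\in\mathcal{L}_S\cap(0,1)}$ gives that
\[
\mathcal{G}=\bigcap_{\ell\in\mathcal{L}_S\cap(0,1)}U_\ell
\]
is a dense $G_\delta$ subset of $C(\Omega,\bbR)$. By construction, for every $b\in\mathcal{G}$ the spectrum $\Sigma_{a,b}$ has an open gap with label $\ell$ for every $\ell\in\mathcal{L}_S\cap(0,1)$, which is precisely the statement that all gaps allowed by the Gap Labelling Theorem are open. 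This yields the corollary.
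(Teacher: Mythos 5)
Your proposal is correct and is essentially the paper's own argument: the paper derives Corollary~\ref{coro.main2} directly from Theorem~\ref{thm.main1} by noting that each set $U_\ell$ is dense (via the path $b_t$) and open (since gap boundaries and the IDS vary continuously, so a realized label persists under small perturbation), and then intersecting over the countably many $\ell\in\mathcal{L}_S\cap(0,1)$ via Baire category. No substantive difference from the paper's route.
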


The analogous results for extended CMV matrices read as follows:

\begin{theorem}[Gap opening for extended CMV matrices]\label{thm.main3}
For every $v \in C(\Omega,\bbD)$  and $\ell\in\mathcal{L}_S \cap (0,1)$, there exists a continuous path $v_t\in C(\Omega,\bbD), 0\leq t\leq 1$ with $v_0=v$ such that for every $t>0$, $\Sigma_{v_t}$ has an open gap with label $\ell$. 
\end{theorem}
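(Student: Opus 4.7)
The plan is to carry out the $\mathrm{SU}(1,1)$-valued analog of the argument used to prove Theorem~\ref{thm.main1}. The key ingredients are in place: Lemma~\ref{lem.ToLocal} and Lemma~\ref{thm.AUH} are stated in a $\chi$-valued form that covers $\chi = \mathrm{SU}(1,1)$, and the CMV projection lemma is the new input furnished by Section~\ref{CMVProject}. First, using the continuity of the IDS/rotation number together with its range being $[0,1]$, I would locate a target point $z_0 \in \partial\bbD$ at which the rotation number matches the prescribed label $\ell$. If $z_0 \notin \Sigma_v$, Johnson's theorem already guarantees an open gap with label $\ell$, and the constant path $v_t \equiv v$ trivially works. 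So assume $z_0 \in \Sigma_v$; then the Szeg\H{o} cocycle $(T, S_v^{(z_0)})$ fails to be uniformly hyperbolic.

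At this fixed energy $z_0$, I would next apply the CMV projection lemma from Section~\ref{CMVProject} to deform within the Szeg\H{o} family: it produces a $C^0$-small perturbation $\tilde v \in C(\Omega,\bbD)$ of $v$ such that $(T, S_{\tilde v}^{(z_0)})$ is uniformly hyperbolic. Invoking the $\mathrm{SU}(1,1)$-valued versions of Lemma~\ref{lem.ToLocal} and Lemma~\ref{thm.AUH}, this local perturbation can be upgraded to a genuine continuous path $v_t \in C(\Omega,\bbD)$ with $v_0 = v$ such that $(T, S_{v_t}^{(z_0)})$ is uniformly hyperbolic for every $t > 0$. By Johnson's theorem, $z_0 \notin \Sigma_{v_t}$ for each $t > 0$, so $z_0$ lies in an open gap of $\Sigma_{v_t}$. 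A standard continuity argument --- the rotation number is $C^0$-continuous in $v$ and locally constant in $z$ on the resolvent set --- shows that for $t$ sufficiently small the label of this gap stays equal to $\ell$, and after reparameterizing the path we may arrange this for all $t \in (0,1]$.

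The main obstacle, and the reason the paper isolates Section~\ref{CMVProject} as the technically hard part, is the CMV projection lemma itself. In the Jacobi/Schr\"odinger setting the cocycle map $A_{a,b}^{(E)}$ depends on $(a,b)$ in a highly flexible way, so the "admissible" perturbation directions inside the image of the sampling parameterization are ample. For the Szeg\H{o} map $v \mapsto S_v^{(z)}$ the dependence involves $v$, $\overline{v}$, and $\rho = \sqrt{1 - |v|^2}$ simultaneously, so the directions of perturbation inside the Verblunsky class form a noticeably more constrained subset of $C(\Omega, \mathrm{SU}(1,1))$. Establishing that these constrained directions nonetheless suffice to push a non-uniformly-hyperbolic cocycle into the uniformly hyperbolic region is the substantive new idea. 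Once that is in hand, the passage through the $\chi$-valued Lemma~\ref{lem.ToLocal}/Lemma~\ref{thm.AUH} and the subsequent continuity of the rotation number are formally parallel to the Jacobi case, with all the $\mathrm{SU}(1,1)$-valued objects transported to $\mathrm{SL}(2,\bbR)$ via the conjugation~\eqref{eq.su11sl2r} whenever convenient.
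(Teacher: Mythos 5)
Your proposal assembles the right ingredients and correctly identifies the CMV projection lemma as the new hard, model-dependent input, but the roles you assign to the lemmas are scrambled in a way that would derail an actual write-up. You claim that the CMV projection lemma ``produces a $C^0$-small perturbation $\tilde v$ \ldots such that $(T, S_{\tilde v}^{(z_0)})$ is uniformly hyperbolic,'' and then that Lemma~\ref{lem.ToLocal} and Lemma~\ref{thm.AUH} ``upgrade'' this local perturbation to a path. This is backwards on both counts. The projection lemma (Lemma~\ref{lem.project}) has no uniform-hyperbolicity content whatsoever: it is a purely geometric statement that an $\mathrm{SU}(1,1)$-valued perturbation of an $\mathcal{S}_{\theta}$-cocycle can be conjugated into an $\mathcal{S}_{\theta}$-valued perturbation. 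The lemma that supplies uniform hyperbolicity is Lemma~\ref{thm.AUH}, and it acts \emph{first}: starting from the Szeg\H{o} cocycle $A = S_v^{(z_0)}$ with $2\rho_{T,A} = \ell \in \mathcal{L}_S$, it produces a continuous path $A_t \in C(\Omega,\mathrm{SU}(1,1))$ with $A_0 = A$, $A_t \in \mathcal{UH}$ for $t > 0$, and constant rotation number. Only \emph{then} does the projection lemma enter, to conjugate $A_t$ (for $t$ small) back into the class $\mathcal{S}_{\theta_0}$, which is what lets one read off a Verblunsky path $v_t$. Lemma~\ref{lem.ToLocal} is an internal ingredient in the proof of the projection lemma itself (it localizes perturbations to a wandering compact set), not something invoked separately at the stage you indicate.

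Two related remarks. First, the closing continuity argument you add to recover the label $\ell$ is unnecessary: Lemma~\ref{thm.AUH} already preserves the rotation number along the entire path, and the projection step is a conjugation and therefore also leaves the rotation number unchanged, so label preservation is automatic. Second, the projection must be carried out in the specific class $\mathcal{S}_{\theta_0}$ with $\theta_0$ frozen by $z_0 = e^{i\theta_0}$, and the whole point of the difficulty in Section~\ref{CMVProject} is that this freezes the diagonal entries and leaves fewer free directions than in the Jacobi case; your final paragraph gestures at this correctly, but the sketch above it does not track how that constraint forces the order of operations just described.
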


\begin{coro}\label{coro.main4}
There is a dense $G_{\delta}$ set of $v \in C(\Omega,\bbD)$ such that  $\Sigma_{v}$ has all gaps allowed by the Gap Labelling Theorem open.
\end{coro}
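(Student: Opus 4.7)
\textbf{Proof plan for Corollary \ref{coro.main4}.} The plan is to deduce the corollary from Theorem~\ref{thm.main3} by a standard Baire category argument, using countability of $\mathcal{L}_S$ and openness of the ``gap opened with label $\ell$'' condition. First, enumerate $\mathcal{L}_S \cap (0,1) = \{\ell_k\}_{k\in\bbN}$, which is possible since the range of the Schwartzman homomorphism is at most countable. For each $k$, define
\begin{equation*}
U_k = \{ v \in C(\Omega,\bbD) : \Sigma_v \text{ has an open gap with label } \ell_k\}.
\end{equation*}
The goal is to show that each $U_k$ is open and dense in $C(\Omega,\bbD)$ (with the uniform topology), and then to apply the Baire category theorem in this complete metric space.

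Density of $U_k$ is an immediate consequence of Theorem~\ref{thm.main3}: given any $v \in C(\Omega,\bbD)$, the theorem produces a continuous path $v_t$ with $v_0 = v$ such that $v_t \in U_k$ for all $t > 0$, in particular providing elements of $U_k$ arbitrarily close to $v$. Openness of $U_k$ rests on continuity of the spectrum and of the IDS in $v$: the Verblunsky coefficients $\alpha_n(\omega) = v(T^n\omega)$ depend continuously (and uniformly in $\omega,n$) on $v\in C(\Omega,\bbD)$ in the uniform topology, hence the CMV matrices $\mathcal{E}_{v,\omega}$ depend continuously in operator norm, and consequently $\Sigma_v$ varies continuously in the Hausdorff metric and $k_v(E)$ varies continuously in $v$. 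Thus if $(g^-,g^+) \subset \bbR\setminus\Sigma_v$ with $k_v \equiv \ell_k$ on this interval, then for all $v'$ in a sufficiently small neighborhood of $v$ there is still a nonempty open interval of $\partial\bbD$ (or rather the corresponding arc, parametrized by the rotation number) disjoint from $\Sigma_{v'}$ with IDS value $\ell_k$, giving $v' \in U_k$.

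Given these two facts, the Baire category theorem applied to the complete metric space $C(\Omega,\bbD)$ implies that
\begin{equation*}
G = \bigcap_{k\in\bbN} U_k
\end{equation*}
is a dense $G_\delta$ subset of $C(\Omega,\bbD)$. For any $v \in G$, every label $\ell \in \mathcal{L}_S \cap (0,1)$ corresponds to some open gap of $\Sigma_v$, which is precisely the statement that all gaps allowed by the Gap Labelling Theorem are open.

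The only nontrivial point beyond invoking Theorem~\ref{thm.main3} is verifying the openness claim, which is not really an obstacle but does rely on (standard) continuity of the spectrum and of the rotation number/IDS in the Verblunsky coefficients; this is the place where some care is needed to state the argument precisely in the CMV setting, but it follows from well-known results such as those in \cite{Simon1, Simon2}.
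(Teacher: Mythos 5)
Your proposal is correct and follows essentially the same route the paper takes: the corollary is deduced from Theorem~\ref{thm.main3} by noting that for each label $\ell\in\mathcal{L}_S\cap(0,1)$ the set of $v$ with an open gap of that label is dense (by the theorem) and open (since gap boundaries and the IDS vary continuously with the sampling function), and then intersecting over the countably many labels via Baire category. The only cosmetic slip is writing $(g^-,g^+)\subset\bbR\setminus\Sigma_v$ where the CMV spectrum lives on $\partial\bbD$, which you already correct yourself by passing to the corresponding arc.
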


\begin{remark}
(a) In the Schr\"odinger case, results of this nature were obtained by Avila-Bochi-Damanik in \cite{ABD12}, providing a negative answer to one of Bellissard's questions, ``whether mixing avoids Cantor spectrum" \cite[Problem 5]{Be82}; compare \cite[Appendix E]{ABD12}.
\\[1mm]
(b) The work of \cite{ABD12} proceeds in two steps. First, uniform hyperbolicity is produced for a perturbed $\mathrm{SL}(2,\bbR)$-valued cocycle that retains the rotation number. Second, a projection lemma then explains how to model this with cocycles of Schr\"odinger form. The first part will be unchanged for our consideration of the cocycles describing solutions to the generalized eigenvalue equation for Jacobi and extended CMV matrices, as explained above. The second part, however, is model-dependent and we will have to establish suitable projection lemmas for the two cases. 
\\[1mm]
(c) These results provide new examples related to Bellissard's question and enrich the family of  operators for which the simultaneous opening of all gaps can be guaranteed in the generic sense. Moreover, it also gives a positive answer to another question of Bellissard, namely ``whether Cantor spectrum is generic whenever it is not precluded by the Gap Labeling Theorem" \cite [Question 6]{Be86}, in the settings of Jacobi and extended CMV matrices.
\end{remark}

\medskip

We now turn to our semi-global result in the subcritical regime of analytic one-frequency quasi-periodic Jacobi and extended CMV matrices. Thus, for this discussion, we consider $\Omega = \T$, $T \omega = \omega + \alpha$ with $\alpha$ irrational, and analytic sampling functions $a,b,v$. Note that an irrational rotation of the circle is strictly ergodic, with Lebesgue measure as the unique invariant probability measure on $\T$.

Let us begin by recalling results known in the Schr\"odinger case ($a \equiv 1$). The \emph{Ten Martini Problem} asks whether for $b(\omega) = 2 \lambda \cos (2 \pi \omega)$ with $\lambda > 0$, $\Sigma$ is a Cantor set. This is by now known to be the case for all parameter values; see Avila-Jitomirskaya \cite{AJ1} and references therein.  The stronger version, where all gaps allowed by the Gap Labelling Theorem are shown to be open, is known as the \emph{Dry Ten Martini Problem}. This statement is now known in almost all cases; compare \cite{CEY, Pui04, AJ2, AYZ}.

Very recently, Ge-Jitomirskaya-You \cite{GJY} showed that ``having all gaps are open" is a stable phenomenon in the neighborhood of almost Mathieu operators with coupling constant $0<|\lambda|<1$. Together with the progress of Xu-Ge-Wang \cite{GWX} for sufficiently large coupling constants, having all gaps open is a rather robust property. It would be interesting to see if this is true for general analytic potentials; see \cite{GJY24}.

To discuss the general analytic quasi-periodic one-frequency case, let us recall Avila's classification of cocycle behavior and the resulting partition of the spectrum. Given that the complement of the spectrum consists of those spectral parameters for which the cocycle is uniformly hyperbolic by Johnson's theorem, the points in the spectrum are partitioned into subcritical, critical, and supercritical values, based on the behavior of the associated cocycle. The supercritical case simply corresponds to the case of a positive Lyapunov exponent. Points in the spectrum where the Lyapunov exponent vanishes, are either subcritical or critical, depending on whether the cocycle retains a subexponential upper bound after complexification or not; see \cite{A} for details. Avila also shows in \cite{A} that for typical sampling functions, there are no critical values of the spectral parameter and hence the spectrum can be viewed as the disjoint union of the subcritical part and the supercritical part. Gaps of $\Sigma$ in the supercritical part were studied by Goldstein and Schlag in \cite{GS11}. Here we address the subcritical region and prove the following result, which is of semi-global nature:


\begin{theorem}\label{thm.main7} Let $\alpha$ be Diophantine and $a\in C^{\omega}(\T,\mathbb{R}_{+})$. Then, for a generic $b\in C^{\omega}(\T,\bbR)$, the corresponding quasi-periodic Jacobi matrix $J_{a,b}$ has all its spectral gaps open in the subcritical region. 
\end{theorem}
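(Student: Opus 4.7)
The plan is to establish a Baire category statement in the analytic category. For fixed $a \in C^\omega(\T,\bbR_+)$ and fixed strip width $h>0$, I would work in the Banach space $C^\omega_h(\T,\bbR)$ of real-analytic functions extending holomorphically to $\{|\Im \omega| < h\}$ with bounded extension. For each label $\ell \in \mathcal{L}_S \cap (0,1)$, I aim to show that the set $\mathcal{B}_\ell$ of $b$'s for which $\ell$ corresponds to a subcritical energy in $\Sigma_{a,b}$ but the gap labeled $\ell$ is closed is locally contained in a proper real-analytic subvariety, hence nowhere dense. Since $\mathcal{L}_S$ is countable, $\bigcup_\ell \mathcal{B}_\ell$ is meager, so its complement is residual; letting $h \to 0^+$ (or working in the natural inductive-limit topology) then produces the desired generic $b \in C^\omega(\T,\bbR)$.

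The analyticity step proceeds as follows. Let $\mathcal{S}_\ell \subset C^\omega_h(\T,\bbR)$ be the set of $b$'s for which the label $\ell$ corresponds to the subcritical part of $\Sigma_{a,b}$. By continuity of the Lyapunov exponent for analytic Jacobi cocycles and openness of the subcritical stratum in Avila's global theory, $\mathcal{S}_\ell$ is open. On $\mathcal{S}_\ell$, Avila's almost reducibility theorem applies to the $\SL(2,\bbR)$-valued Jacobi cocycle \eqref{eq.JacobiCocycle}, so near the gap edges the cocycle can be reduced to a constant (parabolic or elliptic) normal form. This normal-form reduction is the Jacobi analog of the analyticity of resonance tongue boundaries in the Schr\"odinger case, and it implies that the upper and lower gap edges $g_\ell^{\pm}(b)$, and hence the gap-width function $w_\ell(b) := g_\ell^{+}(b) - g_\ell^{-}(b)$, are real-analytic functions of $b$ on $\mathcal{S}_\ell$.

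The non-triviality step requires checking that $w_\ell$ does not vanish identically on any connected component of $\mathcal{S}_\ell$. Here I would invoke Theorem~\ref{thm.main1} to furnish a continuous perturbation opening the gap labeled $\ell$, and then approximate this perturbation by analytic functions, using that $C^\omega_h$ is dense in $C(\T,\bbR)$ as $h \to 0^+$ and that gap edges depend continuously on the coefficients. Alternatively, an explicit leading-order computation via the Eliasson--Puig KAM scheme in the almost-reducible regime shows that the gap opens under some analytic perturbation. Combined with the real-analyticity of $w_\ell$, this forces $\{b \in \mathcal{S}_\ell : w_\ell(b) = 0\}$ to be a proper real-analytic subvariety of $\mathcal{S}_\ell$, which is meager.

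The principal technical obstacle is the analyticity of the tongue boundaries on $\mathcal{S}_\ell$, i.e., the Jacobi version of the analyticity of resonance tongue boundaries. For the Schr\"odinger case this rests on Avila's almost reducibility theorem for $\SL(2,\bbR)$-cocycles together with the KAM framework of Eliasson, Hou--You, and others. Extending this to the Jacobi setting requires tracking the additional positive analytic prefactor $1/a(\omega)$ in \eqref{eq.JacobiMap}; since $a$ is bounded away from zero and real-analytic this factor does not destroy subcriticality or reducibility in principle, but the KAM iteration and the associated normal-form computation near the gap edges must be carried out in the Jacobi framework. The Diophantine condition on $\alpha$ enters at exactly this point to control the small divisors in the KAM scheme.
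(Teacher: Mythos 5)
Your plan hinges on a claim that is much stronger than anything the available machinery provides: that the gap edges $g_\ell^{\pm}$, and hence the width $w_\ell$, are real-analytic \emph{as functions of $b$ on the Banach space} $C^\omega_h(\T,\bbR)$, so that $\{w_\ell=0\}$ is a proper analytic subvariety. Almost reducibility plus the Eliasson--Puig normal form does not give this; what it gives (and what is actually proved here, via Lemma~\ref{twoCases} and Lemma~\ref{ARC}) is analyticity of the tongue boundaries $E^k_\pm(\delta)$ in a \emph{one-dimensional coupling parameter} $\delta$ along the family $b_\delta=\delta b$, and even that is conditional on nondegeneracy of the linearized perturbation $P$ at the reduced (parabolic or identity) normal form --- in the Jacobi case one must verify $[b_{22}^2]\neq 0$ when $c\neq 0$ and, via Cauchy--Schwarz and the conjugacy equation, that the coefficient of $\Delta^2$ in $\det[P]$ is nonzero when $c=0$ (Theorem~\ref{thm.JacobiBound}). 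Your proposal asserts the analytic dependence "and hence" the analyticity of $g_\ell^\pm(b)$ without addressing either the passage from one scalar parameter to an infinite-dimensional one or these nondegeneracy checks; in general, gap edges are only Lipschitz in the coefficients, and analytic continuation of the two edges \emph{through} a collapsed gap is precisely the delicate point, not a formal consequence of reducibility. Without infinite-dimensional analyticity of $w_\ell$, the "proper analytic subvariety, hence meager" step has no foundation.

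The non-triviality step is also not secured. Approximating the continuous path of Theorem~\ref{thm.main1} by analytic functions controls only the $C^0$ (operator-norm) distance: the approximants can have arbitrarily large norm in $C^\omega_h$, need not lie in the same connected component of your set $\mathcal{S}_\ell$, and need not remain subcritical (subcriticality is a statement about complexified iterates and is not stable under $C^0$-small but strip-norm-large perturbations). So "$w_\ell\not\equiv 0$ on every component of $\mathcal{S}_\ell$" is not established. The paper's route avoids both problems: it works with the one-parameter family $\delta b$, proves analyticity of the tongue boundaries in $\delta$ throughout the subcritical region (so for fixed $b$ the collapsed gaps occur at only countably many $\delta$), and then proves a separate transversality statement (Theorem~\ref{thm.transver}) that for each fixed $\delta_0$ an open dense set of $b$ makes every collapsed subcritical gap open linearly in $\delta$; the denseness there is obtained by an explicit $\mathrm{SU}(1,1)$ computation of $\det[P]$ and a first-order KAM/cohomological-equation argument showing a small analytic $f$ perturbs the discriminant off zero --- a local perturbative mechanism, not an appeal to the continuous-category global theorem. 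If you want to salvage your scheme, you would need to either prove Banach-space analyticity of the gap boundaries in the almost-reducible regime (a substantial new result) or restructure the argument around one-parameter slices as the paper does.
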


\begin{remark}
For Diophantine $\alpha$ and $a\equiv 1$, the theorem implies that for a generic  $b\in C^{\omega}(\T,\mathbb{R})$, the corresponding Schr\"odinger operator has all its spectral gaps open in the subcritical region. This can also be approached by a combination of Avila's almost reducibility conjecture \cite{Avila10, Avila24} with Puig's argument \cite{Puig006}. The same result for Liouville $\alpha$ was recently obtained by Li-Zhou \cite{LZ24}.
\end{remark}
For quasi-periodic CMV matrices, we have the following:

\begin{theorem}\label{thm.main8}
Let $\alpha$ be Diophantine. Then, for a generic $(\lambda,h)\in (0,1)\times C^{\omega}(\T,\bbR)$, the  extended CMV matrix generated by the sampling function $v=\lambda e^{ih}$ has all its spectral gaps open in the subcritical region.
\end{theorem}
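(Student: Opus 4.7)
My plan is to follow the strategy of Theorem~\ref{thm.main7}, working in the unitary category by transferring Avila's almost reducibility theorem and Puig's gap-opening argument to the Szeg\H{o} cocycle via the conjugation~\eqref{eq.su11sl2r}. Fix a label $\ell\in\mathcal{L}_S\cap(0,1)$ and set
\begin{equation*}
B_\ell = \{(\lambda,h)\in(0,1)\times C^\omega(\T,\bbR) : \Sigma_v \text{ has a collapsed gap of label } \ell \text{ in the subcritical region}\},
\end{equation*}
where $v=\lambda e^{ih}$. It suffices to show that each $B_\ell$ is contained in a proper real-analytic subvariety of the parameter space; indeed $\bigcap_\ell B_\ell^c$ is then residual and the theorem follows since $\mathcal{L}_S$ is countable.

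The first step is to produce analytic local parameterizations of the gap endpoints near any $(\lambda_0,h_0)\in B_\ell$ with collapsed endpoint $z_0\in\partial\bbD$. By Avila's almost reducibility theorem applied to the SL(2,\bbR)-cocycle $M^{-1}S_v^{(z_0)}(\cdot)M$, the conjugated Szeg\H{o} cocycle is analytically conjugate on a thinner strip to an arbitrarily small perturbation of a constant cocycle. Since $z_0$ lies at a gap edge with label $\ell$, the rotation number $\rho(z_0)=1-\ell$ is in resonance with $\alpha$, and Puig's scheme \cite{Puig006} then upgrades almost reducibility to genuine analytic reducibility, producing a Floquet normal form whose entries depend jointly analytically on $(\lambda,h,z)$ in a neighborhood of $(\lambda_0,h_0,z_0)$. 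The two gap endpoints $z_\pm(\lambda,h)$ are then the analytic roots of a scalar discriminant extracted from this normal form.

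With $z_\pm$ available, the collapse condition $z_-=z_+$ cuts out an analytic subset of $(0,1)\times C^\omega(\T,\bbR)$. To verify that this subvariety is proper, I would exhibit an explicit first-order perturbation that opens the gap: adding a small multiple of the resonant Fourier mode dictated by the equation $1-\ell\equiv k\alpha\pmod 1$ to $h_0$ perturbs the off-diagonal entry of the Floquet normal form linearly in the perturbation parameter, separating $z_-$ from $z_+$ to first order in the standard Puig calculation. This gives an analytic function vanishing on the collapse locus but not identically, so $B_\ell$ is contained in a proper real-analytic subvariety, hence meager.

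The main obstacle is the first step: obtaining almost reducibility and subsequent Puig reducibility in a form that is jointly analytic in the coefficient parameters $(\lambda,h)$ and the spectral parameter $z$, with uniform control on a fixed neighborhood of $(\lambda_0,h_0,z_0)$. The conjugation~\eqref{eq.su11sl2r} reduces this to an SL(2,\bbR) statement, but one must carefully track how $z$ enters the cocycle both through the rotation prefactor $z^{-1/2}$ and through the matrix entry, and verify that the KAM-type scheme runs uniformly in all three parameters. Once this technical package is in place, the remainder is the CMV analogue of the Schr\"odinger calculation underpinning Theorem~\ref{thm.main7}.
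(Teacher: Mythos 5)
Your toolkit (almost reducibility for the subcritical Szeg\H{o} cocycle via \eqref{eq.su11sl2r}, Eliasson--Puig reducibility at the resonant rotation number, analytic dependence of the gap edges, and a first-order opening argument) is essentially the one the paper uses, but the decisive step is not justified as stated. You claim that adding a small multiple of the resonant Fourier mode dictated by $k\alpha$ to $h_0$ separates $z_-$ from $z_+$ to first order ``in the standard Puig calculation.'' That calculation is valid only when the reducing conjugation is (essentially) constant, i.e.\ at zero coupling: this is exactly Theorem~\ref{thm.CMVTrans}, where the opening condition is $\hat h_k\neq 0$. At a general subcritical parameter $(\lambda_0,h_0)$ the matrix $B$ that conjugates the cocycle to its normal form is a nonconstant analytic $\mathrm{SU}(1,1)$-valued map, and the first-order effect of a perturbation of the sampling function on the off-diagonal of the normal form is governed by Fourier coefficients of quadratic expressions in the entries of $B$ (the quantities $h,g$ in the proof of Theorem~\ref{thm.transver} and the discriminant condition \eqref{eq.criticalCondition}); this discriminant can a priori vanish, and the resonant mode of $h$ is no longer the quantity that controls it. This is why the paper does not exhibit an explicit opening perturbation, but instead proves (Theorem~\ref{thm.transver}, Corollary~\ref{cor.CMVTrans}) that the non-degeneracy holds for an open and dense set of sampling functions, by perturbing with an arbitrary small $f$ and tracking, through the first KAM step, how the linear part of the new conjugation depends linearly on the Fourier coefficients of $f$. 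Without an argument of this type your properness claim is unsupported.

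A second, structural issue: the conclusion ``$B_\ell$ is contained in a proper real-analytic subvariety, hence meager'' does not go through in $(0,1)\times C^{\omega}(\T,\bbR)$, because your Floquet normal form, and hence the analytic function whose zero set you invoke, is only defined locally near each $(\lambda_0,h_0,z_0)$; a union of uncountably many local analytic zero sets need not be meager, and the subcritical region itself moves with the parameters. The countable Baire structure has to come from the labels together with openness of the set where a given gap is open, with density supplied precisely by the transversality step above. The paper organizes this differently: it introduces a coupling constant ($v=\lambda e^{i\delta h}$), shows the resonance tongue boundaries are analytic in $\delta$ throughout each subcritical component (Theorem~\ref{thm.CMVBoundary} combined with Lemma~\ref{ARC}), excludes the at most countably many $\delta$ at which boundaries can coincide, and then applies the generic transversality in $h$ at fixed $\delta_0$. (A minor slip: in the CMV case the label relation is $2\rho_{v}(z_0)=k_{v}(z_0)=\ell$, not $\rho=1-\ell$, which is the Jacobi/Schr\"odinger convention.)
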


\section{Preliminaries}\label{sec.pre}

\subsection{Cocycle Dynamics}
Let $\chi$ be either $\mathrm{SL}(2,\bbR)$ or $\mathrm{SU}(1,1)$ and $A\in C(\Omega,\chi)$, where $\Omega$ is a compact metric space, and let $T: \Omega \to \Omega$ be a homeomorphism.
Given such a pair $(T,A)$, referred to as a ($\chi$-)cocycle, define
$$
A^n(\omega)=\left\{\begin{aligned}
&A(T^{n-1}(\omega))\cdot A(T^{n-2}(\omega))\cdots A(\omega)~ n\geq 0;\\
&A^{-1}(T^n(\omega))\cdot A^{-1}(T^{n-1}(\omega))\cdots A^{-1}(T(\omega))~ n<0.
\end{aligned}\right.$$
In the present paper, we will adopt different notations for cocycles, depending on what dependence of parameters we want to make explicit. We hope the readers will not be confused.

\begin{definition}\label{def.UH}
A cocycle $(T,A)$ is said to be {\it uniformly hyperbolic} if there exist constants $c$ and $\lambda>1$ such that $\Vert A^n(x)\Vert> c\lambda^n$ for every $\omega$ and $n>0$.
\end{definition}
We say two cocycles $(T,A)$ and $(T,B)$ are {\it conjugate} if there exists a conjugacy $Z\in C(\Omega,\chi)$ such that 
$$
B(\omega)=Z(T(\omega))A(\omega)[Z(\omega)]^{-1}.
$$ 
Note that uniform hyperbolicity is a conjugacy invariant.

For a Jacobi matrix $J$ with dynamically defined coefficients as in \eqref{eq.JacobiC} and the corresponding Jacobi cocycles $(T,A_E)$, in which we make the dependence of the spectral parameter $E \in \bbR$ explicit, $E \in \sigma(J)$ if and only if the cocycle $(T,A_E)$ is uniformly hyperbolic. That is, let $\mathcal{UH}$ be the set of all uniformly hyperbolic cocycles, then \begin{equation}\label{eq.JacobiUH}
\sigma(J)=\{E\in\bbR:(T,A_E)\notin \mathcal{UH}\}.
\end{equation}

Analogously, for an extended CMV matrix $\mathcal{E}$ with dynamically defined coefficients as in \eqref{eq.VerblunskyC} and the associated Szeg\H{o} cocycle $(T,A_z)$, where $z$ is the spectral parameter, we have $z\in\sigma(\mathcal{E})$ if and only if $(T,A_z)$ is not uniformly hyperbolic, that is
\begin{equation}\label{eq.CMVUH}
\sigma(\mathcal{E})=\{z\in\partial\bbD: (T,A_z)\notin\mathcal{UH}\}.
\end{equation}

\subsection{Fibered Rotation Number}

The following definition is for $\mathrm{SL}(2,\bbR)$-valued cocycles and can be translated to a $\mathrm{SU}(1,1)$ version immediately by the isomorphism \eqref{eq.su11sl2r}. Alternatively, one can also compare the definition for Szeg\H{o} cocycles in \cite[Chapter 8.3]{Simon1}.

\begin{definition}[Rotation number]
Let $(T,A)$ be an $\mathrm{SL}(2,\bbR)$ valued cocycle that is homotopic to a constant. Then one can define a homeomorphism $F : \Omega \times \bbR \to \Omega \times \bbR$ by $F(\omega,t) = (T(\omega),F_2(\omega,t))$, where $F_2(\omega,t)$ is the argument of $A(\omega)\cdot [\cos(2\pi t),\sin(2\pi t)]^{t}$ by identifying $\bbR^2$ with $\mathbb{C}$ in the usual way. $F_2(\omega,t)$ is called a {\it lift} of $(T,A)$. The lift is not unique since for any continuous function $\phi : \Omega \to \mathbb{Z}$, $F_2(\omega,t)+\phi(\omega)$ is also a lift.  Let $F^n_2(\omega,t)$ be the second component of $F^n(\omega,t)$, then 
the limit 
$$
\rho_{T,A} = \lim_{n\to\infty} \frac{F_2^n(\omega,t)-t}{n} 
$$ 
exists uniformly and is independent of $(\omega,t)$. It is called the {\it fibered rotation number} of the cocycle $(T,A)$ \cite{Her83}. Sometimes we will also use notations $\rho_{T,b}(E)$ and $\rho_{T,v}(z)$ to make the dependence of the rotation numbers on sampling functions $b, v$ and spectral parameters $E,z$ explicit whenever the cocycle maps are clear under the context.
\end{definition}

\begin{remark}
The literature contains various ways of introducing and discussing the concept of the rotation number. In particular, the definition just given reproduces the rotation number obtained via the Schwartzman homomorphism, up to a factor $2$ (due to the use of the unit circle as opposed to the real projective line). We work with the definition given here to conform with \cite{ABD12}, but care needs to be exercised regarding the factor $2$.
\end{remark}

\medskip

Let $\Omega$ be a compact metric space with at least three points, for any $A \in C(\Omega,\chi)$ and $K \subset \Omega$, let $C_{A,K}(\Omega,\chi)$ be the set of all $B$ such that $B(\omega)=A(\omega)$ for $\omega$ outside $K$ and denote by $\mathcal{W}_{A,K}$ a neighborhood of $A$ in $C_{A,K}(\Omega,\chi)$. We need the following result, which is \cite[Lemma 10]{ABD09} (parsed as a $\chi$-valued version with ease), to convert the problem to a local one:

\begin{lemma}\label{lem.ToLocal}
Let $V\subset \Omega$ be any nonempty open set, and let $A\in C(\Omega,\chi)$ be arbitrary. Then there exist an open neighborhood $\mathcal{W}_{A,V}\subset C(\Omega,\chi)$ of $A$ and continuous maps 
$$
\Phi = \Phi_{A,V}: \mathcal{W}_{A,V}\to C_{A,\overline{V}}(\Omega,\chi)
$$
and 
$$
\Psi=\Psi_{A,V}: \mathcal{W}_{A,V}\to C(\Omega,\chi)
$$
such that 
$$
\Psi(B)(T(\omega))\cdot B(\omega) \cdot [\Psi(B)(\omega)]^{-1} = \Phi(B)(\omega)
$$
$$
\Phi(A) = A \quad \text{and} \quad \Psi(A) = id.
$$
\end{lemma}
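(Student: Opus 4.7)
My plan is to deduce the lemma from its $\mathrm{SL}(2,\bbR)$-valued prototype \cite[Lemma 10]{ABD09} by (a) observing that the construction there is group-agnostic, depending only on the existence of a smooth exponential map on $\chi$ near the identity, and (b) using the constant conjugation \eqref{eq.su11sl2r} to transfer the $\mathrm{SU}(1,1)$ case to the $\mathrm{SL}(2,\bbR)$ case. Verifying (a) is the only real obstacle.

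For $\chi = \mathrm{SL}(2,\bbR)$ the statement is \cite[Lemma 10]{ABD09}, but it is worth naming the essential inputs so that they can be ported to $\mathrm{SU}(1,1)$. One fixes an auxiliary nonempty open $V_* \Subset V$ and a continuous cutoff $\phi : \Omega \to [0,1]$ supported in $V$ with $\phi \equiv 1$ on $\overline{V_*}$. For $B$ in a small sup-norm neighborhood $\mathcal{W}_{A,V}$ of $A$, the element $B(\omega) A(\omega)^{-1}$ stays uniformly in a small ball about the identity of $\chi$, so $X_B(\omega) = \log\bigl(B(\omega) A(\omega)^{-1}\bigr)$ is a well-defined element of the Lie algebra $\mathfrak{X}$, depends continuously on $(\omega,B)$, and vanishes at $B = A$. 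The minimality of $T$ (implied by strict ergodicity) yields a uniformly bounded backward-hitting time to $V_*$, and one propagates the cohomological identity $\Psi(B)(T\omega)\, B(\omega)\, \Psi(B)(\omega)^{-1} = A(\omega)$ for $\omega \notin \overline{V}$ by a telescoping product of the form $A^n(T^{-n}\omega)\,[B^n(T^{-n}\omega)]^{-1}$ along those backward orbits. The cutoff $\phi$, combined with the exponential on $\mathfrak{X}$, is used to interpolate continuously between the telescoping products associated to adjacent stopping-time pieces inside $V$; the remaining relation then \emph{defines} $\Phi(B)(\omega)$ inside $\overline{V}$. None of this uses anything specific to $\mathrm{SL}(2,\bbR)$ beyond the existence of $\log$ and $\exp$ in a neighborhood of the identity.

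For the $\mathrm{SU}(1,1)$ case, I would piggyback on the $\mathrm{SL}(2,\bbR)$ case via \eqref{eq.su11sl2r}: setting $\tilde A(\omega) := M^{-1} A(\omega) M \in \mathrm{SL}(2,\bbR)$ and applying the $\mathrm{SL}(2,\bbR)$ version to $(\tilde A, V)$ produces $\widetilde{\mathcal{W}}_{\tilde A, V}$, $\tilde\Phi$, $\tilde\Psi$, from which I define
\begin{align*}
\mathcal{W}_{A,V} & = \{B \in C(\Omega, \chi) : M^{-1} B M \in \widetilde{\mathcal{W}}_{\tilde A, V}\},\\
\Phi_{A,V}(B) & = M\, \tilde\Phi_{\tilde A, V}(M^{-1} B M)\, M^{-1},\\
\Psi_{A,V}(B) & = M\, \tilde\Psi_{\tilde A, V}(M^{-1} B M)\, M^{-1}.
\end{align*}
Since $M$ is constant in $\omega$, these objects land back in $C(\Omega, \chi)$, continuity together with the normalizations $\Phi(A) = A$ and $\Psi(A) = \mathrm{id}$ are immediate, the support condition $\Phi(B)(\omega) = A(\omega)$ off $\overline{V}$ is equivalent to its tilde counterpart, and the conjugacy identity transfers by inserting $M M^{-1}$ factors on either side. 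Both cases of the lemma are thus established once (a) is verified.
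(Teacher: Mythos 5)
Your proposal is correct. The paper itself gives no proof of this lemma; it simply cites \cite[Lemma 10]{ABD09} and asserts that the statement is "parsed as a $\chi$-valued version with ease," which is precisely your route~(a): the ABD09 construction of $\Phi$ and $\Psi$ (telescoping products along backward orbits to a small open set, interpolated using a cutoff and the exponential map, with uniform bounds from minimality) never uses more than the Lie-group structure of the target, so it applies verbatim with $\mathrm{SL}(2,\bbR)$ replaced by any matrix Lie group, in particular $\mathrm{SU}(1,1)$. Your route~(b), conjugating by the constant matrix $M$ from \eqref{eq.su11sl2r}, is a cleaner way to settle the $\mathrm{SU}(1,1)$ case because it deduces it formally from the already-established $\mathrm{SL}(2,\bbR)$ result without re-auditing the ABD09 argument for group-specific dependencies; the verification you give — that the constant conjugation carries over the neighborhood, the support condition relative to $\overline{V}$, the normalizations, and the cohomological identity — is complete. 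Note that (b) alone already proves the lemma once the $\mathrm{SL}(2,\bbR)$ case is taken as known, so (a) is not strictly needed; your closing sentence ("once (a) is verified") slightly understates the independence of route~(b), but this is a cosmetic issue and not a gap.
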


The following result (see \cite[Theorem 3]{ABD12}) is the key to opening gaps:

\begin{lemma}[Accessibility by uniformly hyperbolic cocycles]\label{thm.AUH}
Suppose $A\in C(\Omega,\chi)$ is homotopic to a constant and obeys $2\rho_{T,A}\in \mathcal{L}_S$. Then there exists a continuous path $A_t\in C(\Omega,\chi)$, $0\leq t\leq 1$ with $A_0=A$ and such that $A_t\in\mathcal{UH}$ for every $t>0$. In particular, $\rho_{T,A_t}$ is independent of $t$. 
\end{lemma}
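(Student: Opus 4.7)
My plan is first to reduce the $\SU(1,1)$-valued case to the $\SL(2,\bbR)$-valued case, and then to follow the strategy of \cite[Theorem 3]{ABD12}. For the reduction, let $M$ be as in \eqref{eq.su11sl2r} and set $\hat A(\omega) := M^{-1} A(\omega) M \in \SL(2,\bbR)$. The conjugation $X \mapsto M^{-1} X M$ is a continuous group isomorphism that preserves homotopy classes of cocycles, preserves uniform hyperbolicity (it is a fixed global change of basis and the norms are comparable up to the constants $\|M\|,\|M^{-1}\|$), and intertwines the $\SU(1,1)$-action on $\partial\bbD$ with the $\SL(2,\bbR)$-action on $\bbR\bbP^1$; in particular, fibered rotation numbers are preserved because the lifts differ by a bounded $\omega$-independent function. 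Any path $\hat A_t$ satisfying the conclusion for $\hat A$ then yields $A_t := M \hat A_t M^{-1}$ as a path satisfying it for $A$. Thus from now on I assume $\chi = \SL(2,\bbR)$.

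In the $\SL(2,\bbR)$ setting I would unpack the hypothesis $2\rho_{T,A} \in \mathcal{L}_S = \mathcal{A}^{(\mu)}(C^\#(\tilde\Omega, \tilde T))$ by picking a continuous $\phi : \tilde\Omega \to \T$ whose Schwartzman rotation number equals $2\rho_{T,A}$. Restricting to the zero-section of the mapping torus and using $\T \simeq \bbR\bbP^1$, I obtain a continuous section $\xi : \Omega \to \bbR\bbP^1$ which carries the prescribed winding; in particular, any uniformly hyperbolic cocycle over $T$ that admits $\xi$ as its unstable section automatically has fibered rotation number $\rho_{T,A}$.

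Using $\xi$, I would construct the path. After choosing a continuous $P : \Omega \to \SO(2)$ with $P(\omega) \cdot [e_1] = \xi(\omega)$, the reference cocycle
\begin{equation*}
B_\lambda(\omega) = P(T(\omega)) \cdot \mathrm{diag}(\lambda, \lambda^{-1}) \cdot P(\omega)^{-1}
\end{equation*}
lies in $\mathcal{UH}$ for every $\lambda > 1$, has unstable section $\xi$, and therefore has rotation number $\rho_{T,A}$. The family $A_t$ is then obtained by deforming $A$ via a hyperbolic factor aligned with $\xi$: schematically, one inserts along each orbit a one-parameter subgroup of hyperbolic elements in $\SL(2,\bbR)$ fixing $\xi$, with expansion rate $s(t)$ satisfying $s(0)=0$ and $s(t) > 0$ for $t > 0$. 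The resulting path is continuous on $[0,1]$, starts at $A$, and for every $t > 0$ the coherent alignment of the hyperbolic insertions with $\xi$ produces a uniformly expanding direction along orbits, forcing $A_t \in \mathcal{UH}$. Once this is in hand, the independence of $\rho_{T,A_t}$ on $t$ is automatic: the rotation number is continuous on $[0,1]$ and locally constant on $\mathcal{UH}$, so it is constant on the connected arc $\{A_t : t \in (0,1]\}$ and then also equal to $\rho_{T,A}$ at $t=0$ by continuity.

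\textbf{Main obstacle.} I expect the genuinely delicate point to be arranging uniform hyperbolicity of $A_t$ for \emph{every} $t > 0$, not merely for $t$ beyond some threshold. A naive straight-line interpolation between $A$ and $B_\lambda$ would typically leave $\mathcal{UH}$ for small $t$, since $A$ itself is not UH. The way around this, following \cite[Theorem 3]{ABD12}, is to design the perturbation so that even arbitrarily small hyperbolic insertions, once compounded along the orbit and coherently aligned with the Schwartzman section $\xi$, produce uniform exponential growth; this is verified by a direct analysis of the projective action on $\bbR\bbP^1$ near $\xi$. This analysis is formulated purely in terms of the $\SL(2,\bbR)$-structure and of continuous sections of $\bbR\bbP^1$-bundles over $\Omega$, so it transfers without modification to the present $\chi$-valued statement after the conjugation by $M$.
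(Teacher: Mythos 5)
Your route coincides with the paper's: the paper does not reprove this statement but presents it as \cite[Theorem 3]{ABD12} ``parsed'' to the $\chi$-valued setting, and your reduction of the $\mathrm{SU}(1,1)$ case to $\mathrm{SL}(2,\bbR)$ via the fixed conjugation $M$ from \eqref{eq.su11sl2r} (which preserves homotopy to a constant, uniform hyperbolicity, and the fibered rotation number by its very definition) followed by an appeal to the ABD12 argument is exactly that parsing. Your additional sketch of the internals of \cite[Theorem 3]{ABD12} is only heuristic (the hard point of staying in $\mathcal{UH}$ for all $t>0$, which uses strict ergodicity and the finite-dimensional factor, is deferred to that reference), but since the paper itself relies on the citation rather than a proof, this is consistent with its approach.
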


\section{The Projection Lemma for CMV Matrices}\label{CMVProject}

Since any $Z\in\mathrm{SU}(1,1)$ can be written as $\begin{bmatrix}a&b\\\bar{b}&\bar{a}\end{bmatrix}, |a|^2-|b|^2=1$, let $v,\theta,\varphi$ be determined by  $$a=\frac{1}{\sqrt{1-v^2}}e^{i\theta}, b=\frac{v}{\sqrt{1-v^2}}e^{-i\varphi},$$
therefore, \begin{equation}\label{eq.SU11}Z=\frac{1}{\sqrt{1-v^2}}\begin{bmatrix}e^{i\theta} &ve^{-i\varphi}\\ve^{i\varphi}&e^{-i\theta}\end{bmatrix}.\end{equation} 
This implies that every $Z\in\mathrm{SU}(1,1)$ can be parametrized as $(\theta, \varphi, v),\theta,\varphi\in\bbR, v\in[0,1).$
Recall a Szeg\H{o} cocycle takes exactly the same form. Now let's fix $\theta\in [0,\pi]$ and denote
\begin{equation}\label{eq.szegoM}\mathcal{S}_{\theta}=\left\{\frac{1}{\sqrt{1-v^2}}\begin{bmatrix}e^{i\theta}&ve^{-i\varphi}\\ve^{i\varphi}&e^{-i\theta}\end{bmatrix}:\varphi\in\bbR, v\in[0,1)\right\}.\end{equation}
Notice that by the definition, an element of $\mathcal{S}_\theta$ has upper left entry  $\frac{e^{i\theta}}{\sqrt{1-v^2}}$ with $\theta$ being a constant, which leads to a difference when it is a function in \eqref{eq.SU11}.

The following lemma is the central part of this section and is the only model-dependent part of the proof.

\begin{lemma}[Projection Lemma: CMV case]\label{lem.project}
Let $T: \Omega \to \Omega$ be a minimal homeomorphism of a compact metric space with at least three points, and let $A\in C(\Omega,\mathcal{S}_{\theta})$. Then there exist a neighborhood $\mathcal{W}\subset C(\Omega,\mathrm{SU}(1,1))$ of $A$ and continuous maps 
$$
\Phi = \Phi_A: \mathcal{W} \to C(\Omega,\mathcal{S}_\theta)\quad \text{and}\quad \Psi=\Psi_A: \mathcal{W} \to C(\Omega,\mathrm{SU}(1,1))
$$
such that for any $B \in \mathcal{W}$,
$$
\Psi(B)(T(\omega)) \cdot B(\omega) \cdot [\Psi(B)(\omega)]^{-1} = \Phi(B)(\omega)
$$
$$
\Phi(A) = A,~\Psi(A)=id.
$$
In particular, an $\mathrm{SU}(1,1)$-valued perturbation of a Szeg\H{o} cocycle corresponding to parameter $\theta$ is conjugate to an $\mathcal{S}_\theta$ valued perturbation.
\end{lemma}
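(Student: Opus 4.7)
The plan is to establish this projection lemma via an implicit function theorem argument in the Banach manifold $C(\Omega, \mathrm{SU}(1,1))$, combined with the localization provided by Lemma~\ref{lem.ToLocal}. The key geometric observation is that $\mathcal{S}_\theta$ is a real codimension-one submanifold of $\mathrm{SU}(1,1)$, cut out by the single equation $\arg(M_{11}) = \theta$. Thus, defining
\begin{equation*}
G(B, Z)(\omega) := \arg\!\big([Z(T\omega)\, B(\omega)\, Z(\omega)^{-1}]_{11}\big) - \theta
\end{equation*}
as a smooth functional from a neighborhood of $(A, \mathrm{id})$ in $C(\Omega, \mathrm{SU}(1,1))^2$ into $C(\Omega, \bbR)$ (a consistent branch of $\arg$ exists near $(A,\mathrm{id})$ because $G(A,\mathrm{id})\equiv 0$), the construction of $\Psi(B)$ reduces to solving the single scalar equation $G(B,Z)=0$ for $Z$ near the identity; $\Phi(B)(\omega) := \Psi(B)(T\omega)\,B(\omega)\,\Psi(B)(\omega)^{-1}$ then lies in $C(\Omega,\mathcal{S}_\theta)$ automatically.

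Writing $Z = \exp(\zeta)$ with $\zeta(\omega) = \begin{bmatrix} i\alpha(\omega) & \beta(\omega) \\ \overline{\beta(\omega)} & -i\alpha(\omega) \end{bmatrix} \in \mathfrak{su}(1,1)$ (so $\alpha \in C(\Omega,\bbR)$, $\beta \in C(\Omega,\bbC)$), a direct computation using the parametrization \eqref{eq.SU11} for $A$ yields the linearization
\begin{equation*}
L\zeta(\omega) := \partial_Z G\big|_{(A,\mathrm{id})}\zeta = \big[\alpha(T\omega) - \alpha(\omega)\big] + v_A(\omega)\, \Im\!\Big(e^{-i\theta}\big[\beta(T\omega)\, e^{i\varphi_A(\omega)} - \overline{\beta(\omega)}\, e^{-i\varphi_A(\omega)}\big]\Big).
\end{equation*}
The technical core is to construct a bounded right inverse of $L$, after which a contraction-mapping argument promotes this to a solution of the nonlinear equation $G(B,Z) = 0$ and yields the continuous maps $\Psi$ and $\Phi$. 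I would first invoke Lemma~\ref{lem.ToLocal} to reduce to perturbations $B$ that agree with $A$ outside a prescribed small open set $K$, so that the obstruction $g := G(B,\mathrm{id})$ is supported in $K$. Because $T$ is a non-periodic minimal homeomorphism of a space with at least three points, $K$ can be shrunk to enforce the wandering condition $K \cap T(K) = \emptyset$; on the resulting disjoint set $K \sqcup T(K)$ the two occurrences $\beta(\omega)$ (for $\omega \in K$) and $\beta(T\omega)$ (lying in $T(K)$) become genuinely independent unknowns, so with $\alpha \equiv 0$ the equation $L\zeta = g$ decouples into a pointwise linear equation in $\beta(\omega) \in \bbC$ that is uniquely solvable wherever $v_A(\omega) > 0$.

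The principal obstacle I anticipate is the degenerate locus $\{\omega : v_A(\omega) = 0\}$, on which $A(\omega)$ is diagonal and the coefficient of $\beta$ in $L$ collapses, destroying the pointwise $\beta$-inversion. There one is forced to engage the diagonal direction $\alpha$, but the operator $\alpha \mapsto \alpha\circ T - \alpha$ is never surjective onto $C(\Omega,\bbR)$ for a minimal $T$, so the diagonal freedom cannot cover the obstruction globally. The resolution should be a careful local splitting: use $\alpha$ only to absorb the piece of $g$ near $\{v_A = 0\}$ (where it can be made small by shrinking $K$ and using continuity), use $\beta$ on the complement, and glue via smooth cutoffs while preserving the bounded-inverse estimates needed to run the contraction. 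Establishing this decomposition and showing that it depends continuously on $B \in \mathcal{W}$ is precisely the model-dependent step that distinguishes the CMV case from the Schr\"odinger/Jacobi cases, and it constitutes, as advertised in the introduction, the main new ingredient of this section.
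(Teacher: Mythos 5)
Your proposal takes a genuinely different route from the paper — an implicit-function-theorem/contraction-mapping argument against the codimension-one constraint $\arg(M_{11})=\theta$, rather than the paper's explicit algebraic construction. The paper, after the same reduction via Lemma~\ref{lem.ToLocal} to a compact $K$ with $K\cap T(K)=K\cap T^2(K)=\emptyset$, parametrizes a triple product $A_3A_2A_1$ by $(\varphi_1,v_2,\varphi_3)$ (an argument at $T^{-1}(K)$, a \emph{modulus} at $K$, an argument at $T(K)$) and proves by a direct geometric analysis of the cardioid-shaped region $G(v_2)\subset\bbC$ that the resulting map $\eta:\mathcal{S}_\theta^3\to\SU(1,1)$ is a local homeomorphism onto a full neighborhood; $\Phi,\Psi$ are then read off from $\eta^{-1}$. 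Your IFT framing is plausible in spirit but differs on both the choice of unknowns and the mechanism for surjectivity.

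That said, there are two concrete gaps in your argument.

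First, the claim that $L\zeta=g$ ``decouples into a pointwise linear equation in $\beta(\omega)$'' on $K$ understates the system. Once $Z=\exp\zeta$ is supported on $K\sqcup T(K)$, the constraint $\arg\bigl([Z(T\omega)B(\omega)Z(\omega)^{-1}]_{11}\bigr)=\theta$ becomes active on the \emph{three} sets $T^{-1}(K)$, $K$, $T(K)$: at $\omega\in T^{-1}(K)$ it imposes a real constraint on $\beta|_K$ (through $\beta(T\omega)$), and at $\omega\in T(K)$ a real constraint on $\beta|_{T(K)}$. After these two ``boundary'' constraints one is left with two real degrees of freedom against one remaining equation on $K$, so the linear problem is not pointwise-uniquely-solvable in the way you assert; it is (generically) underdetermined, and one must exhibit a specific bounded right inverse, which you do not. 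The paper's $\eta$ handles exactly this three-site coupling in one stroke, which is why the parameters $(\varphi_1,v_2,\varphi_3)$ are distributed over $T^{-1}(K)$, $K$, $T(K)$.

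Second, the proposed resolution of the degenerate locus $\{v_A=0\}$ by leaning on the diagonal parameter $\alpha$ cannot work as described, and the ``careful local splitting with cutoffs'' is the entire content of the lemma, which you do not supply. The $\alpha$-contribution to $L\zeta$ is the coboundary $\alpha\circ T-\alpha$, which telescopes: if $\alpha$ is supported on $\bigcup_{j=0}^{m}T^j(K)$, summing the required identities over the orbit segment forces $\sum_j g(T^j\omega_0)=0$, i.e., the obstruction $g$ must have zero cumulative mass along the orbit; this fails generically, and no choice of cutoff removes the constraint. So on a region where $v_A$ is small, your linearization collapses and the $\alpha$-direction cannot absorb the residue. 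The paper sidesteps this by making one of the free parameters a modulus ($v_2$), which generates movement in the $(1,1)$-entry that is not a coboundary; the cardioid/simple-connectedness lemma is precisely the verification that this extra direction suffices. In short, the geometric core that your proposal defers to ``cutoffs'' is where the actual argument lives, and the specific splitting you outline is obstructed.
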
 

This result is an $\mathrm{SU}(1,1)$ and Szeg\H{o} analogue of \cite[Lemma 1.3]{ABD12}; however, we would like to mention that representing a general $\mathrm{SU}(1,1)$ matrix as a product of $\mathcal{S}_{\theta}$-valued matrices is in general impossible. The main obstruction is the loss of freedom of the upper-left elements of $\mathcal{S}_{\theta}$ valued matrices as $\theta$ is fixed.

Lemma~\ref{lem.project} is much weaker than \cite[Lemma 11]{ABD09}; in particular, after the completion of \cite{ABD12}, the inverse map is well defined for any $\mathrm{SL}(2,\bbR)$ valued perturbations with desired smoothness. But in the actual applications, the projection map only needs to be applied to $\mathrm{SL}(2,\bbR)$ matrices that are close enough to Schr\"odinger cocycles. This is the reason that Lemma~\ref{lem.project} is sufficient for our purpose.

Due to Lemma~\ref{lem.ToLocal}, to prove Lemma~\ref{lem.project}, it is sufficient to prove the following local version:

\begin{lemma}\label{lem.KeyLem}
Let $K\subset \Omega$ be a compact set such that $K \cap T(K) = \emptyset$ and $K \cap T^2(K) = \emptyset$. Let $A \in C(\Omega,\mathcal{S}_\theta)$. Then there exist an open neighborhood $\mathcal{W}_{A,K}\subset C_{A,K}(\Omega,\mathrm{SU}(1,1))$ of $A$ and continuous maps 
$$
\Phi = \Phi_{A,K} : \mathcal{W}_{A,K} \to C(\Omega,\mathcal{S}_\theta) \quad \text{and}\quad \Psi=\Psi_{A,K}:\mathcal{W}_{A,K} \to C(\Omega,\mathrm{SU}(1,1))
$$
such that for $B\in \mathcal{W}_{A,K}$,
$$
\Psi(B)(T(\omega)) \cdot B(\omega) \cdot [\Psi(B)(\omega)]^{-1} = \Phi(B)(x)
$$
$$
\Phi(A)=A,~\Psi(A)=id.
$$
\end{lemma}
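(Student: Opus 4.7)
The plan is to build $\Psi(B)$ explicitly with support in $T(K) \cup T^2(K)$, reducing the projection to finitely many pointwise equations. The hypotheses propagate one step under $T$: $T^3(K) \cap T(K) = T(T^2(K) \cap K) = \emptyset$ and $T^3(K) \cap T^2(K) = T^2(T(K) \cap K) = \emptyset$. Hence for every $\omega_0 \in K$, the points $T\omega_0 \in T(K)$, $T^2\omega_0 \in T^2(K)$, and $T^3\omega_0 \notin T(K) \cup T^2(K)$ lie in pairwise disjoint sets. Setting $\Psi(B) = \mathrm{id}$ off $T(K) \cup T^2(K)$ makes $\Phi = \Psi(T\cdot)\,B\,\Psi^{-1}$ automatically equal $A \in \mathcal{S}_\theta$ outside $K \cup T(K) \cup T^2(K)$, and at each $\omega_0 \in K$ reduces to three pointwise constraints:
\begin{align*}
\Psi(T\omega_0)\,B(\omega_0) &\in \mathcal{S}_\theta,\\
\Psi(T^2\omega_0)\,A(T\omega_0)\,\Psi(T\omega_0)^{-1} &\in \mathcal{S}_\theta,\\
A(T^2\omega_0)\,\Psi(T^2\omega_0)^{-1} &\in \mathcal{S}_\theta.
\end{align*}
Potential overlap points (e.g., $\omega \in T^{-1}(K) \cap T^2(K)$, which the hypotheses do not exclude) cause no inconsistency: the equation at such $\omega$ coincides with the one of the three above corresponding to the unique $\omega_0 \in K$ with $\omega \in \{T\omega_0, T^2\omega_0\}$.

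Next, I would invoke the implicit function theorem at the base point $\Psi(A) = \mathrm{id}$, $B = A$. Since $\mathcal{S}_\theta$ is the codimension-one submanifold of $\mathrm{SU}(1,1)$ cut out by the single real equation $\arg M_{11} = \theta$, the three constraints amount to a $3 \times 6$ real linear system in the tangent vectors $Y_1, Y_2 \in \mathfrak{su}(1,1)$ representing perturbations of $\Psi(T\omega_0)$ and $\Psi(T^2\omega_0)$. A continuous selection from the three-dimensional kernel, depending smoothly on $B$, yields the desired $\Psi(B) \in C(\Omega, \mathrm{SU}(1,1))$ and $\Phi(B) \in C(\Omega, \mathcal{S}_\theta)$ with $\Psi(A) = \mathrm{id}$ and $\Phi(A) = A$; continuity of $\Psi$ across the boundary of $T(K) \cup T^2(K)$ is automatic, since $B$ approaches $A$ on $\partial K$ forces the linear system to have $\Psi = \mathrm{id}$ as its distinguished solution there.

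The main obstacle I anticipate is establishing surjectivity of the $3 \times 6$ linearization uniformly in $\omega_0 \in K$. A direct computation (writing $Y_j$ in the basis with entries $ia_j, b_j, \bar b_j, -ia_j$) reduces the three equations to a single scalar equation of the form
$$\Im\bigl((\alpha_0 + \bar\beta_1)\, b_1 + (\alpha_1 + \bar\beta_2)\, b_2\bigr) = -\dot\pi_0,$$
where $\alpha_i = \bar A_{12}/A_{11}$ and $\beta_i = A_{12}/A_{11}$ evaluated at $T^i \omega_0$, and $\dot\pi_0 = \Im(\dot B_{11}/A_{11})(\omega_0)$. Surjectivity thus holds iff the coefficient pair $(\alpha_0 + \bar\beta_1,\, \alpha_1 + \bar\beta_2) \in \bbC^2$ is nonzero --- a codimension-four (hence non-generic) condition on the triple $(A(\omega_0), A(T\omega_0), A(T^2\omega_0)) \in \mathcal{S}_\theta^3$. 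Handling the degenerate locus where both components vanish is the point at which, as the authors note in the remark following the statement, the CMV case genuinely departs from its Schr\"odinger counterpart and requires new ideas. I would address it either by shrinking the open set $V$ used in Lemma~\ref{lem.ToLocal} until $A$ is nearly constant on $\overline V \cup T\overline V \cup T^2 \overline V$ (precluding the degeneracy), or by enlarging the ansatz to prescribe $\Psi$ on a fourth disjoint iterate of $\overline V$ afforded by minimality of $T$, obtaining additional tangential degrees of freedom sufficient to recover surjectivity.
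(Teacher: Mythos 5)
Your reduction is sound as far as it goes, and it is in fact the paper's own reduction shifted by one iterate: setting $C_1=\Psi(T\omega_0)B(\omega_0)$, $C_2=\Psi(T^2\omega_0)A(T\omega_0)\Psi(T\omega_0)^{-1}$, $C_3=A(T^2\omega_0)\Psi(T^2\omega_0)^{-1}$, your three pointwise constraints are exactly the statement that the perturbed product $A(T^2\omega_0)A(T\omega_0)B(\omega_0)$ lies in the image of the triple-product map $\eta:\mathcal{S}_\theta^3\to\mathrm{SU}(1,1)$ near the base triple, which is the paper's $\eta$; the disjointness bookkeeping and the overlap remark are fine. Where you diverge is in how local surjectivity of $\eta$ is obtained: you linearize and invoke the implicit function theorem, and your computation of the obstruction is correct --- in the $(\varphi,v)$ coordinates of $\mathcal{S}_\theta$ your pair $(\alpha_0+\bar\beta_1,\alpha_1+\bar\beta_2)$ equals $\bigl(v_0e^{i(\phi_0-\theta)}+v_1e^{i(\phi_1+\theta)},\,v_1e^{i(\phi_1-\theta)}+v_2e^{i(\phi_2+\theta)}\bigr)$, and the $3\times6$ system is onto precisely when this pair is nonzero.

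The genuine gap is that the degenerate locus cannot be avoided, and neither of your proposed remedies repairs the argument. For constant $A$ both entries of the pair equal $2v\cos\theta\,e^{i\phi}$, which vanishes identically when $\theta=\pi/2$ (i.e.\ $z_0=-1$) or when $v=0$; so ``shrink $V$ until $A$ is nearly constant'' does not preclude degeneracy --- at $\theta=\pi/2$ it makes the linearization (almost) singular at \emph{every} $\omega_0\in K$, for every choice of $V$, and the lemma must hold for all $\theta\in[0,\pi]$ and all $A\in C(\Omega,\mathcal{S}_\theta)$, including sampling functions with small or vanishing modulus, since that is how it is used in the proof of Theorem~\ref{thm.main3}. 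Adding a fourth (or any further) iterate does not help either: with $n$ factors the same kernel computation forces all multipliers to be proportional and yields the consecutive-pair conditions $v_je^{i(\phi_j-\theta)}+v_{j+1}e^{i(\phi_{j+1}+\theta)}=0$, all of which again hold for (near-)constant $A$ at $\theta=\pi/2$ and for $v\equiv0$, so no number of extra tangential degrees of freedom restores surjectivity of the differential. This is exactly the model-dependent heart of the lemma and the point where the CMV case departs from the Schr\"odinger one: the paper does not argue by transversality at all, but fixes the specific parameters $(\varphi_1,v_2,\varphi_3)$ and proves covering through a global analysis of the attainable upper-left entries --- the region $G(v_2)$ is simply connected and grows monotonically once $v_2\ge v$ (the cardioid ``fills in'') --- which gives local surjectivity of $\eta$ even where its differential is singular. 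Without an argument of this global or higher-order type, your proof does not go through at, e.g., $z_0=-1$ with near-constant Verblunsky data; as a secondary point, your continuity-of-$\Psi$ claim at $\partial K$ also presupposes a canonical solution selection, which again relies on the (unavailable) uniform surjectivity.
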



\begin{proof}
Let $A_1,A_2,A_3\in\mathcal{S}_{\theta}$ be parameterized in the following way:
\begin{equation}\label{eq.parameterization}
\begin{aligned}
&A_1=\frac{1}{\sqrt{1-v^2}}\begin{bmatrix}e^{i\theta}&ve^{-i\varphi_1}\\ve^{i\varphi_1}&e^{-i\theta}\end{bmatrix},~\varphi_1\in\bbR,\\
&A_2=\frac{1}{\sqrt{1-v_2^2}}\begin{bmatrix}e^{i\theta}&ve^{-i\varphi}\\ve^{i\varphi}&e^{-i\theta}\end{bmatrix},~v_2\in[0,1),\\
&A_3=\frac{1}{\sqrt{1-v^2}}\begin{bmatrix}e^{i\theta}&ve^{-i\varphi_3}\\ve^{i\varphi_3}&e^{-i\theta}\end{bmatrix},~\varphi_3\in\bbR,
\end{aligned}
\end{equation}
where $v,\varphi$ are any constants, that is, we take the triple $(\varphi_1,v_2,\varphi_3)$ as our undetermined parameters.
Let us consider the following injective map:
$$
\eta : \mathcal{S}_{\theta}^3 \to \mathrm{SU}(1,1),
$$
$$
(A_1,A_2,A_3)\to A_3\cdot A_2\cdot A_1.
$$
A direct computation shows $A_3A_2A_1=\frac{1}{\sqrt{1-v_2}(1-v^2)}\begin{bmatrix}a&\bar{b}\\b&\overline{a}\end{bmatrix}$, where 
$$
\begin{aligned}&a=e^{3i\theta}+\left(vv_2e^{i(\varphi-\varphi_1+\theta)}+vv_2e^{i(\theta-\varphi+\varphi_3)}+v^2e^{i(\varphi_3-\theta-\varphi_1)}\right),\\
&b=ve^{i(\varphi_1+2\theta)}+v_2e^{i\varphi}+ve^{i(\varphi_3-2\theta)}+v^2v_2e^{i(\varphi_1+\varphi_3-\varphi)}.\end{aligned}
$$
Comparing \eqref{eq.SU11} with \eqref{eq.szegoM}, the perturbations of the off-diagonal element of \eqref{eq.SU11} can be viewed as perturbations of the off-diagonal element of \eqref{eq.szegoM}. The main difficulty is that the upper-left entry of the element of \eqref{eq.szegoM} has less freedom. One can easily add a perturbation to it and kick it out of $\mathcal{S}_\theta$. Now let us focus on the analysis of the upper-left element of the product $A_3A_2A_1$.

For fixed $\theta,\varphi,v$, denote the region formed by the values of $a$ by $G(v_2)$. Then it is clear from \eqref{eq.ulRange} below that $G(v_2)$ is continuous in $v_2$. More importantly, as $v_2$ increases, the outer boundaries expand in a monotonic way, but we still need to address the inner boundaries, which disappear as $v_2$ hits a certain threshold.  This creates the wiggle room for $\eta$ to set up a local homeomorphism between $\mathcal{S}_\theta$ and $\mathrm{SU}(1,1)$.

To make this precise, let $g:(\varphi_1,v_2,\varphi_3)\to\mathbb{C}$ be 
\begin{equation}\label{eq.ulRange}
vv_2e^{i(\varphi-\varphi_1+\theta)}+vv_2e^{i(\theta-\varphi+\varphi_3)}+v^2e^{i(\varphi_3-\theta-\varphi_1)}.\end{equation} 
The following lemma visualizes the range of $g$ and shows that the inner boundaries will not be a problem.

\begin{lemma}
For any $v\leq v_2<1$, $\mathrm{Ran}(g)$ is simply connected, that is, there is no hole in $\mathrm{Ran}(g)$.
\end{lemma}

\begin{proof}
For any $v_2$, it follows from periodicity that $\mathrm{Ran}(g)$ is closed. It suffices to show that the range of $e^{i\varphi_1}+e^{i\varphi_3}+\lambda e^{i(\varphi_1+\varphi_3)},$ where $0<\lambda\leq 1,$ is simply connected. 
\begin{center}

\begin{tabular}{ccc}

\begin{tikzpicture}
\draw (-2.5,0) -- (2.5,0);
\draw (0,-2.5) -- (0,2.5);
\draw [blue] (0.707,0.707) circle (1.4);
\draw (-0.707,-0.707) circle (0.7368);
\end{tikzpicture}
&\begin{tikzpicture}
\draw (-2.5,0) -- (2.5,0);
\draw (0,-2.5) -- (0,2.5);
\draw [blue] (-0.707,0.707) circle (0.7368);
\draw (0.707,-0.707) circle (1.4);
\end{tikzpicture}
&\begin{tikzpicture}
\draw (-2.5,0) -- (2.5,0);
\draw (0,-2.5) -- (0,2.5);
\draw [blue] (-1,0) circle (0.5);
\draw (1,0) circle (1.5);

\end{tikzpicture}
\\
$\lambda=1/2,\varphi_1=\pi/4$&$\lambda=1/2,\varphi_1=3\pi/4$&$\lambda=1/2,\varphi=\pi$\\
\end{tabular}
\end{center}
Let $D(z,r)$ be the disc centered at $z\in\mathbb{C}$ with radius $r$.
For each fixed $\varphi_1$, the set $\{e^{\varphi_1}+e^{i\varphi_3}(1+\lambda e^{i\varphi_1}),\varphi_3\in [0,2\pi]\}$ represents a circle $\partial D(e^{i\varphi_1},r(\varphi_1))$, where $r(\varphi_1)=\sqrt{1+\lambda^2+2\lambda\cos\varphi_1}$. Note also $r(\varphi_1+\pi)=\sqrt{1+\lambda^2-2\lambda \cos\varphi_1}$, and \begin{equation}\label{eq.intersectCond}r(\varphi_1)+r(\varphi_1+\pi)=\sqrt{1+\lambda^2+2\lambda\cos\varphi_1}+\sqrt{1+\lambda^2-2\lambda\cos\varphi_1}\geq 2,
\end{equation}
where equality happens if and only if $\varphi_1=0,\pi$. Starting with any initial $\varphi_1\neq 0, \pi$, $D(e^{i\phi_1},r(\varphi_1))\cap D(e^{i(\varphi_1+\pi)},r(\varphi_1+\pi))$ is non empty due to \eqref{eq.intersectCond}. Therefore the fundamental group of the plane separated by the two circles is isomorphic to $\mathbb{Z}^3.$ Letting $\varphi_1$ go through $\pi$ and $0$, the initial disc   $D(e^{i\varphi_1},r(\varphi_1))$ will be erased, that is, $\cap_{\varphi_1\in [0,\pi]}D(e^{i\varphi_1},r(\varphi_1))=\emptyset$ and the intersection is contractible, that is, $\cap_{\varphi_1\in [0,\pi]} D(e^{i\varphi_1},r\varphi_1)\cap D(e^{i(\varphi+\pi)},r(\varphi+\pi))=\emptyset$. This shows that for $\lambda\leq 1$, any loop in $\mathrm{Ran}(g)$ is contractible. 
\end{proof}

\begin{remark}
When $\lambda>1$, the circles corresponding to $\varphi_1=0$ and $\varphi_1=\pi$ are inscribed, therefore leave a hole in $\mathrm{Ran}(g)$.
\end{remark}

Now $G(v_2)$ is a monotonically increasing simply connected region for $v_2\geq v$ in $\mathbb{C}$ as $v_2$ increases. The following pictures show the (rescaled) range of $g$:

\begin{center}
\begin{tabular}{c c c c}
\includegraphics[width=34mm, height=34mm]{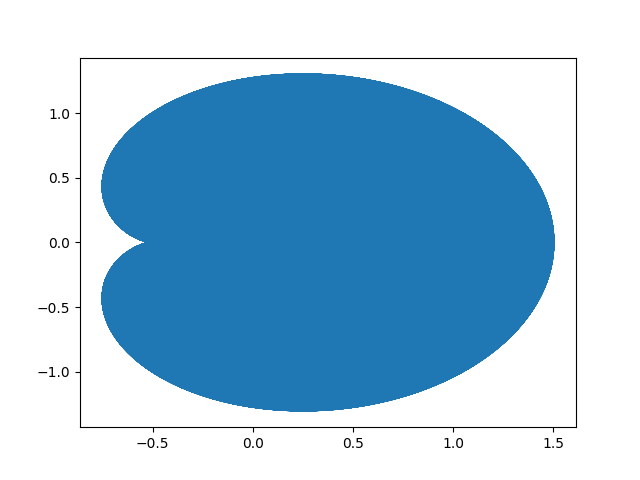}&\includegraphics[width=34mm, height=34mm]{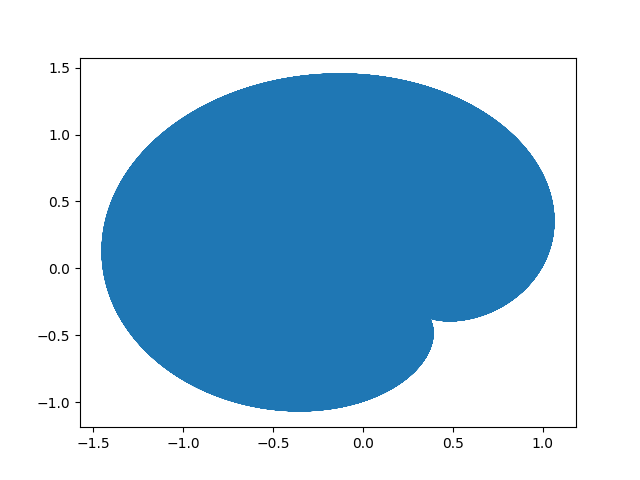}&\includegraphics[width=34mm, height=34mm]{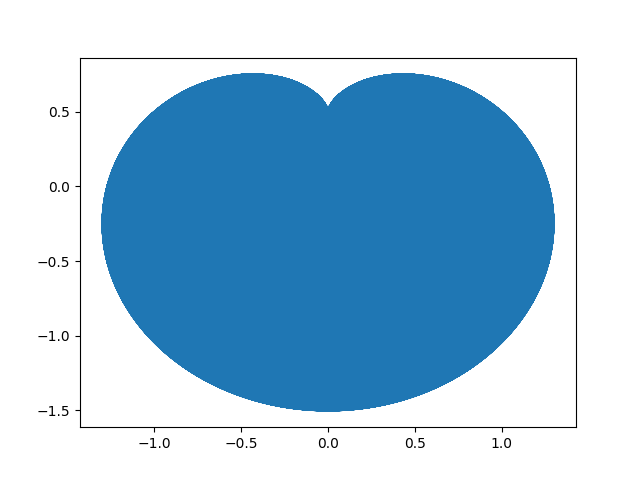}&\includegraphics[width=34mm, height=34mm]{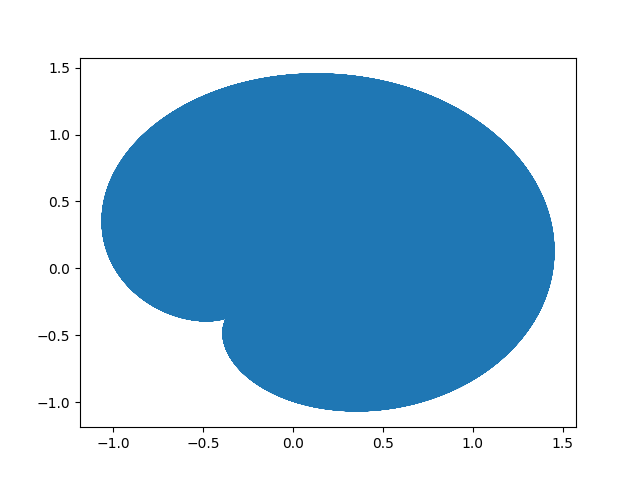}\\
$\theta=0$&$\theta=\pi/4$&$\theta=\pi/2$&$\theta=3\pi/4$\\
\includegraphics[width=34mm, height=34mm]{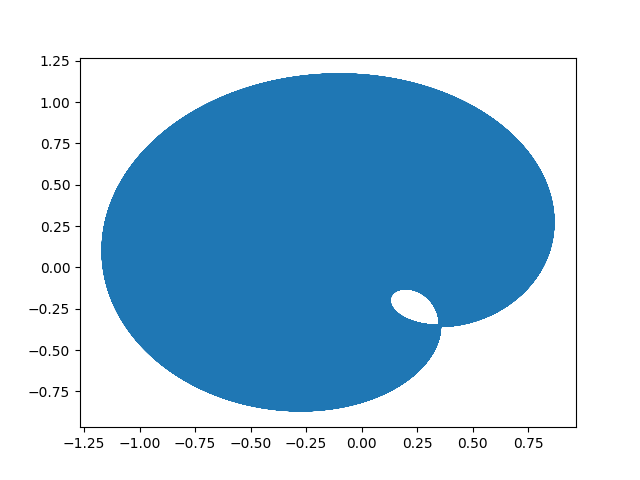}&\includegraphics[width=34mm, height=34mm]{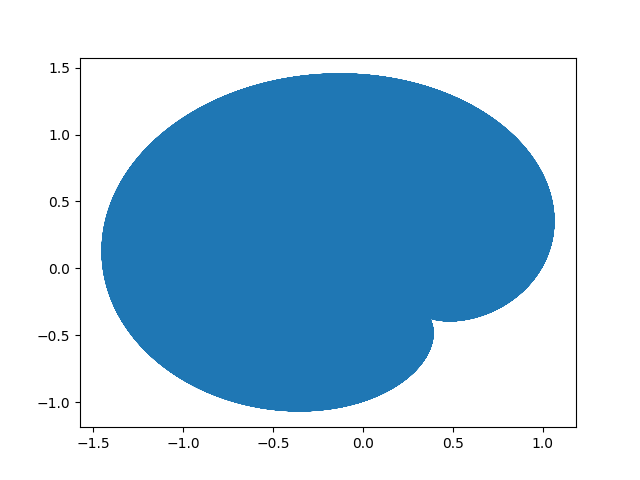}&\includegraphics[width=34mm, height=34mm]{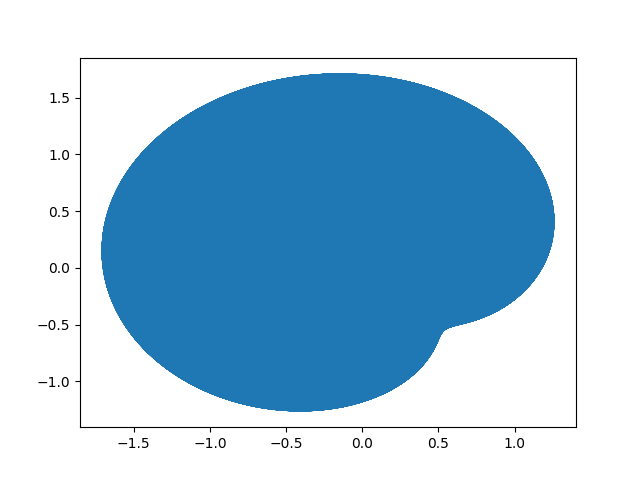}&\includegraphics[width=34mm, height=34mm]{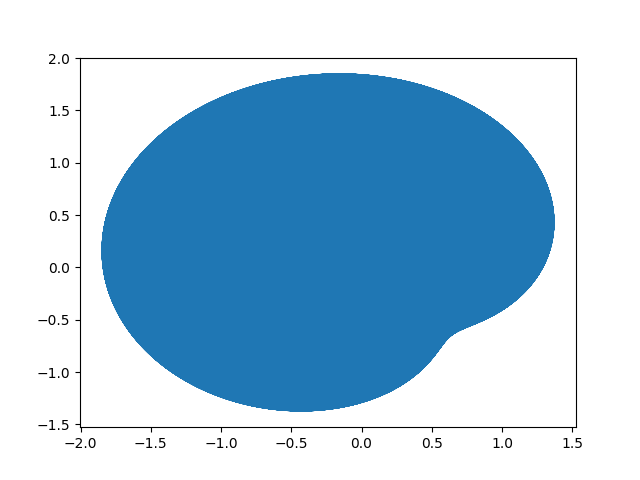}\\
$v_2=1/2$&$v_2=\sqrt{2}/2$&$v_2=\sqrt{4/5}$&$v_2=\sqrt{99/100}$\\
\end{tabular}
\end{center}

The first row of pictures shows the rotating effect of $\theta$ with fixed $v_2=v=\sqrt{1/2},\varphi=\pi/3$, note that the effect of $\varphi$ is the same as $\theta$ from the expression. The second row of pictures shows how $v_2$ will change the boundaries, with $\theta=\pi/4,\varphi=3\pi/4$. In particular, the cardioid becomes solid when $v_2\geq v.$ 

Let us return to the proof of Lemma~\ref{lem.KeyLem}. Let $\tilde{\mathcal{W}}_{A}$ be an open neighborhood of $A\in\mathcal{S}_\theta$ in $\mathrm{SU}(1,1)$ such that, assuming $$A=\frac{1}{\sqrt{1-v^2}}\begin{bmatrix}e^{i\theta}&ve^{-i\varphi}\\ve^{i\varphi}&e^{-i\theta}\end{bmatrix},$$ and letting $\theta,\varphi$  in \eqref{eq.parameterization} be the same as in $A$, for any $B_j\in\tilde{\mathcal{W}}_A, j=1,2,3$, there exists $(\varphi_1,v_2,\varphi_3)$ such that $B_1B_2B_3=\eta(A_1, A_2, A_3)=A_3A_2A_1.$

Now that $\eta$ sets up a local homeomorphism between $\mathcal{S}_\theta$ and $\mathrm{SU}(1,1)$, we can construct the required maps. Let $\mathcal{W}_{A,K}$ be the intersection of $\tilde{\mathcal{W}}_A$ with $C_{A,K}$. For $B\in\mathcal{W}_{A,K}$ , let $\Phi(B)(\omega)=A(\omega)$ if $\omega \notin \bigcup_{i=-1}^{1}T^{i}(K)$ and for $\omega \in K$, let 
$$
(\Phi(B)(T^{-1}(\omega)),\Phi(B)(\omega),\Phi(B)(T(\omega)))=\eta^{-1}(B(T^{-1}(\omega))\cdot B(\omega)\cdot B(T(\omega))).
$$
Let $\Psi(B)(\omega)=id$ for $\omega \notin K \cup T(K)$ and let $\Psi(B)(\omega)=\Phi(B)(T^{-1}(\omega))\cdot[B(T^{-1}(\omega))]^{-1}$ for $\omega\in K$. For $\omega \in T(K)$, let 
$$
\Psi(B)(\omega)=\Phi(B)(T^{-1}(\omega))\cdot\Phi(B)(T^{-2}(\omega))\cdot[B(T^{-2}(\omega))]^{-1}\cdot[B(T^{-1}(\omega))]^{-1},
$$ 
and all required properties follow.
\end{proof}

Now let us prove Lemma~\ref{lem.project}:

\begin{proof}[Proof of Lemma \ref{lem.project}]
Let $V\subset \Omega$ be a non-empty open subset such that letting $K=\overline{V}$, $K\cap T(K)=\emptyset$, and $K\cap T^2(K)=\emptyset.$ Then let $\Phi_{A,V}:\mathcal{W}_{A,V}\to C_{A,K}(\Omega,\mathrm{SU}(1,1))$ and $\Psi_{A,V}:\mathcal{W}_{A,V}\to C(\Omega,\mathrm{SU}(1,1))$ be given by Lemma~\ref{lem.ToLocal}. Let $\Phi_{A,K}:\mathcal{W}_{A,K}\to C(\Omega,\mathcal{S}_\theta)$ and $\Psi_{A,K}:\mathcal{W}_{A,K}\to\mathrm{SU}(1,1)$ given by Lemma~\ref{lem.KeyLem}. Let $\mathcal{W}$ be the domain of $\Phi_{A,K} \circ \Phi_{A,V}$, $\Phi = \Phi_{A,K}\circ \Phi_{A,V}$ and $\Psi = (\Psi_{A,K}\circ\Phi_{A,V})\cdot\Psi_{A,V}$, then the result is proved. 
\end{proof}

\begin{remark}
One may compare Lemma~\ref{lem.KeyLem} with \cite[Lemma 1.3]{ABD12} and the proof of \cite[Lemma 11]{ABD09}. Our choice of the projection is not a coincidence. Indeed, for Schr\"odinger cocycles, each contains only one parameter and therefore one needs the triple product to cover $\mathrm{SL}(2,\bbR)$. For Szeg\H{o} cocycles with constant $\theta$, each matrix contains two parameters: the argument $\varphi$ and the modulus $v$, so taking a triple product will produce an underdetermined system. One can either choose the argument or the modulus of a Szeg\H{o} cocycle as the parameter. However, we do need at least two directions to cover an open region in the complex plane. Taking three argument parameters will cause a boundary problem. Namely, the triple product of $A\in \mathcal{S}_\theta$ with fixed modulus $\lambda$ is always on the boundary of $G(\lambda)$ (the tip). Thus perturbations of the upper-left element of $A$ may generate triple products that are outside of $G(\lambda)$; as a consequence, the inverse map cannot be defined.  
\end{remark}

\section{The Projection Lemma for Jacobi Matrices}\label{JacobiProject}

Define  the set of Jacobi cocycles as follows:
\begin{equation}\label{eq.JacobiM}\mathcal{J}=\left\{\frac{1}{a}\begin{bmatrix}t&-1\\a^2&0\end{bmatrix}:a>0,t\in\bbR\right\}.\end{equation}
Then we have the following:

\begin{lemma}[Projection Lemma: Jacobi case]\label{lem.JprojectionLem}
Let $T : \Omega \to \Omega$ be a minimal homeomorphism of a compact metric space with at least four points, and let $A\in C(\Omega,\mathcal{J})$. Then there exist a neighborhood $\mathcal{W}\subset C(\Omega,\mathrm{SL}(2,\bbR))$ of $A$ and continuous maps
$$
\Phi = \Phi_A : \mathcal{W} \to C(\Omega,\mathcal{J})~\text{and}~\Psi=\Psi_A:\mathcal{W}\to C(\Omega,\mathrm{SL}(2,\bbR))
$$
such that 
$$
\Psi(B)(T(\omega))B(\omega)[\Psi(B)(\omega)]^{-1}=\Phi(B)(\omega),
$$
$$
\Phi(A)=A,~\Psi(A)=id.
$$
In particular, an $\mathrm{SL}(2,\bbR)$ valued perturbation of an element in $\mathcal{J}$ can be conjugated to a $\mathcal{J}$-valued perturbation.
\end{lemma}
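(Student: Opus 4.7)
The plan is to follow the strategy of the CMV projection lemma (Lemma~\ref{lem.project}) in broad outline, but with a quadruple product replacing the triple product of the CMV setting, since each element of $\mathcal{J}$ carries only two real parameters ($a>0$ and $t\in\bbR$), while $\SL(2,\bbR)$ is three-dimensional. First, I would reduce, via Lemma~\ref{lem.ToLocal}, to a local version analogous to Lemma~\ref{lem.KeyLem}. Given a compact set $K\subset\Omega$ such that $K$, $T(K)$, $T^2(K)$, and $T^3(K)$ are pairwise disjoint (which exists because $\Omega$ has at least four points and $T$ is minimal), the task becomes to construct continuous maps $\Phi_{A,K}$ and $\Psi_{A,K}$ on an open neighborhood $\mathcal{W}_{A,K}$ of $A$ in $C_{A,K}(\Omega,\SL(2,\bbR))$ realizing the conjugation relation and the normalization $\Phi(A)=A$, $\Psi(A)=\mathrm{id}$.

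Next I would introduce the quadruple-product map
\[
\eta : \mathcal{J}^4 \to \SL(2,\bbR),\quad (A_1,A_2,A_3,A_4) \mapsto A_4\,A_3\,A_2\,A_1,
\]
parameterize each factor by $(a_i,t_i)\in\bbR_+\times\bbR$, and restrict $\eta$ to a three-parameter submanifold by fixing five of the eight coordinates at the target values $a(T^{i-1}\omega)$ and $t(T^{i-1}\omega)$. A natural choice is to let a mixed triple such as $(t_1,a_2,t_3)$ vary, since pure variation of $t$-parameters produces tangent vectors essentially of Schr\"odinger type (whose span may be deficient precisely at degenerate targets), while introducing an $a$-direction rounds out a basis of $\mathfrak{sl}(2,\bbR)$. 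The goal is then to show that the restricted $\eta$ is a local diffeomorphism onto a neighborhood of the target product $A(T^3\omega)A(T^2\omega)A(T\omega)A(\omega)$ in $\SL(2,\bbR)$.

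With a continuous local inverse for $\eta$ in hand, the construction of $\Phi$ and $\Psi$ then mirrors the CMV case: outside a suitable union of iterates of $K$, set $\Phi(B)=A$ and $\Psi(B)=\mathrm{id}$; on $T^i(K)$ for $0\le i\le 3$, read off the four values of $\Phi(B)$ from $\eta^{-1}$ applied to the local product $B(T^3\omega)B(T^2\omega)B(T\omega)B(\omega)$; and define $\Psi(B)$ telescopically on the remaining iterates to enforce $\Psi(B)(T\omega)B(\omega)[\Psi(B)(\omega)]^{-1}=\Phi(B)(\omega)$. Continuous dependence on $B$ follows from the continuity of $\eta^{-1}$, and the positivity constraint $a>0$ on the $\mathcal{J}$-coordinates is preserved for sufficiently small perturbations by continuity.

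The main obstacle is verifying the local diffeomorphism property of the restricted $\eta$. Unlike the CMV case, where the image of the relevant upper-left entry admitted a transparent geometric picture (the cardioid-type regions), in the Jacobi case one has to work at the infinitesimal level: compute the three partial derivatives of $A_4A_3A_2A_1$ with respect to the chosen free parameters, apply left trivialization by the inverse of the target to reduce to elements of $\mathfrak{sl}(2,\bbR)$, and verify linear independence. The computation is analogous to the tangent-space analysis underlying \cite[Lemma 1.3]{ABD12}, but must be adapted to the mixed-parameter setting; the additional $a$-direction available in $\mathcal{J}$ (absent in the Schr\"odinger family) is what makes the argument go through, and its presence is ultimately the reason why the Jacobi case requires a four-factor rather than a three-factor product.
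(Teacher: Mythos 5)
Your overall architecture (localize via Lemma~\ref{lem.ToLocal}, invert a product of Jacobi matrices over consecutive iterates of $K$, then define $\Phi$ and $\Psi$ telescopically) matches the paper's, but the one step you defer---full rank of the restricted quadruple product---is exactly where the proposal breaks, and it fails for the specific choice you name. Take the degenerate situation $\tr\, A\equiv 0$ with, say, $a\equiv 1$, so every factor equals $J=\begin{bmatrix}0&-1\\1&0\end{bmatrix}$. Writing $P=A_4A_3A_2A_1$ and differentiating at this point, your three free directions give
\[
\partial_{t_1}P=J^{3}\begin{bmatrix}1&0\\0&0\end{bmatrix}=\begin{bmatrix}0&0\\-1&0\end{bmatrix},\qquad
\partial_{t_3}P=J\begin{bmatrix}1&0\\0&0\end{bmatrix}J^{2}=\begin{bmatrix}0&0\\-1&0\end{bmatrix},\qquad
\partial_{a_2}P=J^{2}\begin{bmatrix}0&1\\1&0\end{bmatrix}J=\begin{bmatrix}-1&0\\0&1\end{bmatrix},
\]
so the differential has rank $2$, not $3$; the parallelism of the $t_1$- and $t_3$-directions persists for arbitrary values of the $a_i$ as soon as all $t_i=0$ (both derivatives are multiples of the lower-left elementary matrix, while $\partial_{a_2}P$ is diagonal). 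Hence your restricted $\eta$ is not a submersion precisely on the degenerate stratum, which the lemma must cover (e.g.\ $b\equiv E_0$ in the application), and no implicit-function argument can be run there as stated. Some other mixed triple (for instance $(t_1,t_2,a_3)$) does have full rank at this particular point, but then one must verify full rank at \emph{every} point of the diagonal of $\mathcal{J}^4$, which the proposal never does; the claim that ``introducing an $a$-direction rounds out a basis'' is asserted, not proved, and is false for the triple you chose.

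The paper's proof shows why this regime is genuinely delicate and takes a different route. It splits according to whether $\tr\, A$ vanishes identically. If not, it localizes to an open set where $\tr\, A\neq 0$ and uses a \emph{three}-factor product varying only $t_1,t_2,t_3$ (the $a$-freedom is never used); the inverse is written down explicitly and only requires the lower-right entry of the product to be nonzero, which is exactly the condition $\tr\, A\neq 0$ at the middle site. If $\tr\, A\equiv 0$, it follows the scheme of \cite[Lemma 11]{ABD09}: four factors are used, but not through a local diffeomorphism---one parameter is scaled like $\sup_\omega\Vert \hat A^4(\omega)-id\Vert^{1/2}$ and the remaining ones are solved for explicitly, yielding a continuous (non-smooth) section. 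So your heuristic that the extra $a$-parameter is ``ultimately the reason why the Jacobi case requires a four-factor product'' is backwards: four factors appear only in the trace-zero case, and there the difficulty is resolved by the square-root scaling trick rather than by varying $a$. To repair your argument you would either need to reproduce this two-case analysis or exhibit a single choice of three free Jacobi parameters whose product map is a submersion along the entire diagonal, together with the verification you currently omit.
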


By Lemma~\ref{lem.ToLocal}, we only need to deal with localized perturbations. 

\begin{lemma}\label{lem.KeyLem0}
Let $K \subset \Omega$ be a compact set such that $K\cap T(K)=\emptyset$ and $K\cap T^2(K)=\emptyset$. Let $A\in C(\Omega,\mathcal{J})$. Then there exist an open neighborhood $\mathcal{W}_{A,K}\subset C_{A,K}(\Omega,\mathrm{SL}(2,\bbR))$ of $A$ and continuous maps 
$$
\Phi = \Phi_{A,K} : \mathcal{W}_{A,K}\to C(\Omega,\mathcal{J}) \quad \text{and}\quad \Psi=\Psi_{A,K}:\mathcal{W}_{A,K}\to C(\Omega,\mathrm{SL}(2,\bbR))
$$
such that for $B\in \mathcal{W}_{A,K}$,
$$
\Psi(B)(T(\omega)) \cdot B(\omega) \cdot [\Psi(B)(\omega)]^{-1}=\Phi(B)(\omega)
$$
$$
\Phi(A)=A,~\Psi(A)=id.
$$
\end{lemma}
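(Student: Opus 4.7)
The plan is to mirror the proof of the CMV projection lemma (Lemma~\ref{lem.KeyLem}), replacing the Szeg\H{o}-type parametrization by the Jacobi analogue
\[
\eta : \mathcal{J}^3 \to \mathrm{SL}(2,\bbR), \qquad (A_1, A_2, A_3) \mapsto A_3 A_2 A_1.
\]
Since each element of $\mathcal{J}$ carries two parameters $(a_i, t_i) \in \bbR_+ \times \bbR$ by~\eqref{eq.JacobiM}, the source is $6$-dimensional and the target is $3$-dimensional. The disjointness hypotheses $K \cap T(K) = K \cap T^2(K) = \emptyset$ ensure that $K, T(K), T^2(K)$ are pairwise disjoint, so for each $\omega \in K$ the three-point stencil $\{T^{-1}(\omega), \omega, T(\omega)\}$ is embedded in a single orbit segment. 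Once a continuous local inverse $\eta^{-1}$ is produced on a neighborhood of the continuous family $\{(A(T^{-1}\omega), A(\omega), A(T\omega)) : \omega \in K\}$, the construction of $\Phi$ and $\Psi$ follows the CMV template verbatim: $\Phi(B) = A$ outside $T^{-1}(K) \cup K \cup T(K)$, and on $K$ we set
\[
(\Phi(B)(T^{-1}\omega), \Phi(B)(\omega), \Phi(B)(T\omega)) = \eta^{-1}\bigl(B(T^{-1}\omega) \cdot B(\omega) \cdot B(T\omega)\bigr);
\]
then $\Psi(B)$ is given by the telescoping formulas from the proof of Lemma~\ref{lem.KeyLem}.

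The technical heart is the verification that $\eta$ is a local submersion at each base triple. Writing each $A_i$ as in~\eqref{eq.JacobiM} and multiplying out yields polynomial formulas for the four entries of $M = A_3 A_2 A_1$. A direct computation of the six partial derivatives $\partial\eta/\partial a_i$ and $\partial\eta/\partial t_i$ at any base triple shows that they span the $3$-dimensional tangent space $T_M \mathrm{SL}(2,\bbR)$; for instance, at the base $a_i = 1,\, t_i = 0$ (where each $A_i = J$ and $M = -J$) the six derivatives reduce to three linearly independent symmetric matrices in $T_{-J}\mathrm{SL}(2,\bbR) = \{N : N_{12} = N_{21}\}$, spanning it. Restriction to any fixed choice of three of the six parameters yields a $3 \times 3$ Jacobian whose vanishing locus is an explicit codimension-one hypersurface in $\mathcal{J}^3$ --- e.g., the choice $(t_1, t_2, a_2)$ with $(a_1, t_3, a_3)$ held fixed gives the Jacobian $(t_2 t_3 - a_2^2)/(a_1^3 a_2^2 a_3)$ in suitable local coordinates on $\mathrm{SL}(2,\bbR)$, and different parameter choices give different loci. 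Since at every point of $\mathcal{J}^3$ some choice has non-vanishing Jacobian, by compactness of $K$ one covers the family of base triples by finitely many such charts; the uniqueness clause of the implicit function theorem, which forces each local inverse to fix its base triple, then glues them into a single continuous $\eta^{-1} : \mathcal{V} \to \mathcal{J}^3$ on an open neighborhood $\mathcal{V}$ of the family in $\mathrm{SL}(2,\bbR)$.

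The main obstacle I foresee is precisely this uniform submersion property together with the coherent gluing of local charts across $K$. In contrast with the CMV situation, where the difficulty involved the range geometry of the product map (its inner versus outer boundaries), here the issue is a genuine rank drop of $\eta$ along codimension-one loci in $\mathcal{J}^3$; one must argue that no base triple lies simultaneously in the critical locus of every parameter subset. Because these loci are cut out by explicit polynomial equations in $(a_i, t_i)$, a routine covering argument suffices. Apart from this, the Jacobi case is noticeably simpler than the CMV case: $\mathcal{J}$ is noncompact, so there is no range obstruction, and the local inverses automatically take values in $\mathcal{J}^3$.
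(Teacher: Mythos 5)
Your overall strategy is viable but genuinely different from the paper's, and your central linear-algebra observation is correct: right-translating, the tangent directions $\partial_{t_i}$ and $\partial_{a_i}$ of $\mathcal{J}$ at any of its points span exactly the $2$-dimensional plane of upper-triangular traceless matrices, and conjugation by $A_3\in\mathcal{J}$ (whose lower-left entry $a_3>0$ never vanishes) moves that plane off itself, so $D\eta$ has rank $3$ at \emph{every} point of $\mathcal{J}^3$, including $t_i=0$; your computation at $a_i=1$, $t_i=0$ checks out. The paper instead freezes $a_1,a_2,a_3$ and inverts the triple product explicitly in $(t_1,t_2,t_3)$, which degenerates exactly when the lower-right entry $s=-a_3t_2/(a_1a_2)$ vanishes, i.e.\ when $\tr A$ vanishes at the middle site; hence its two cases ($\tr A\equiv 0$ handled with a four-fold product on a set $K'$ disjoint from $T^j(K')$, $j=1,2,3$, otherwise $K$ chosen inside $\{\tr A\neq 0\}$). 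Your use of the $a$-directions removes that case distinction, but note the trade-off: your $\Phi$ perturbs the $a$-component as well, whereas the paper's construction keeps $a$ fixed. This is irrelevant for the literal statement of the lemma, but the fixed-$a$ property is exactly what is used later when $\Phi(A_t)\in\mathcal{J}$ is converted into a path $b_t$ with the \emph{same} $a$ in the proof of Theorem~\ref{thm.main1}, so your variant would not support that application without modification.

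The genuine gap is in how you produce the inverse. Since $\eta$ has $3$-dimensional fibers, local right inverses are highly non-unique, and the uniqueness clause of the implicit function theorem operates only within one chart, i.e.\ for one fixed choice of three free parameters with the other three frozen; two charts built from different parameter subsets both pass through the base triple but disagree at nearby products, so a finite subcover does not glue them into a single map. Moreover, a single $\eta^{-1}\colon \mathcal{V}\to\mathcal{J}^3$ on a neighborhood of the set of products in $\mathrm{SL}(2,\bbR)$ is not even the right object: distinct $\omega\in K$ can produce the same product with different base triples, so no $\omega$-independent inverse can fix all of them. What the construction of $\Phi$ actually requires (also for continuity of $\Phi(B)$ across $\partial K$, where $B=A$) is a family of local sections $\sigma_\omega$ of $\eta$, jointly continuous in $(\omega,M)$, sending the unperturbed product to the unperturbed triple. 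This is fixable, and in fact a single global choice of free parameters suffices: freeing $(t_2,t_3,a_3)$ and freezing $(a_1,t_1,a_2)$ at the values coming from $A$, the right-translated partial derivatives are nonzero multiples of $\mathrm{Ad}(A_3)\begin{bmatrix}0&1\\0&0\end{bmatrix}$, $\begin{bmatrix}0&1\\0&0\end{bmatrix}$ and $\begin{bmatrix}1&0\\0&-1\end{bmatrix}$, and since $\mathrm{Ad}(A_3)\begin{bmatrix}0&1\\0&0\end{bmatrix}$ has lower-left entry $-a_3^2\neq 0$, these span the traceless matrices at every point of $\mathcal{J}^3$; one application of the implicit function theorem with parameters then yields the required continuous family (alternatively, use a continuous transversal to $\ker D\eta$). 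As written, however, the covering-plus-IFT-uniqueness argument does not deliver a coherent inverse, and that is precisely the step on which the lemma rests.
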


\begin{proof}
The proof follows the same outline as the proofs of \cite[Lemma 11]{ABD09} and \cite[Lemma 1.3]{ABD12} with minor changes. For any compact $K\subset \Omega$ with non-empty interior, if $T$ is minimal, then by \cite[Lemma 10]{ABD09}, namely, the $\mathrm{SL}(2,\bbR)$ version of Lemma~\ref{lem.ToLocal}, any $\mathrm{SL}(2,\bbR)$ perturbation $\tilde{A}$ of $A\in\mathcal{J}$ can be conjugated to a perturbation $\hat{A}$ such that $\hat{A}=A$ outside $K$. Therefore it suffices to consider these local perturbations.

The argument is divided into two different cases:

\textbf{Case 1: $\tr \, A$ vanishes identically.} 

Since $\Omega$ has at least four points, we can pick $K$ having a compact neighborhood $K'$ such that $K'\cap T^{j}(K')=\emptyset$ for $j=1,2,3.$

Since $A\in \mathcal{J}$ and $\tr \, A=0$ together give $A^{4}=id$, as $\hat{A}$ is close to $A$, $\hat{A}^{4}$ is close to the identity $id$. Define $E=\sup_{\omega\in \Omega}\Vert \hat{A}^4(\omega)-id\Vert^{\frac{1}{2}}$.

For each $\omega \in K'$, let $E_3(\omega)=E\phi(\omega)$, where $\phi(\omega)$ is determined by 
$$
\hat{A}^4(\omega)=id+\begin{bmatrix}p(\omega)&q(\omega)\\r(\omega)&s(\omega)\end{bmatrix}=A_4A_3A_2A_1,
 ~A_i(\omega)=\frac{1}{a_i(\omega)}\begin{bmatrix}E_i(\omega)&-1\\a_i^2(\omega)&0\end{bmatrix}, i=1,2,3,4
$$ 
and 
$$E_1(\omega)=-\frac{r(\omega)+E\frac{a_1a_4}{a_2a_3}\phi(\omega)}{s(\omega)+1},E_2(\omega)=\frac{a_2^2-\frac{a_1a_2a_3}{a_4}(1+s(\omega))}{E\phi(\omega)},E_4(\omega)=\frac{a_4^2q(\omega)-\frac{a_3a_4}{a_1a_2}E_2(\omega)}{s(\omega)+1},$$
with the expression for $E_2(\omega)=0$ whenever $E\phi(\omega)=0$. It is readily verified that $E_i(\omega): K'\to\bbR$ are continuous and vanish on $\partial K'$.

Construct $\Phi(\hat{A})=\hat{A}=A$ for $x\notin \cup_{j=1}^{3}T^{j}(K')$. For $\omega\in K'$, let $\Phi(\hat{A})(T^{j}(\omega))=A_{j+1}(\omega)$, $j=0,1,2,3.$ Then $\Phi$ is continuous in a neighborhood of $A$.

Since $\Phi(\hat{A})^4(\omega)=\hat{A}^4(\omega)$ for $\omega \in K'$, let $\Psi(\hat{A})=id$ for $\omega \notin \cup_{j=1}^{3} T^{j}(K')$, $\Psi(\hat{A})(T^{j}(\omega))=\Phi(\hat{A})^j(\omega)\hat{A}^{j}(\omega)^{-1}$ for $\omega\in K'$ and $j=1,2,3$. Then $\Psi$ is continuous in a neighborhood of $A$ and satisfies 
$$
\Psi(\hat{A})(T(\omega))\hat{A}(\omega)[\Psi(\hat{A})(\omega)]^{-1}=\Phi(\hat{A})(\omega).
$$ 
\medskip

\textbf{Case 2: $\tr \, A$ does not vanish identically.}

In this case, we do not need to take four copies of $\mathcal{J}$. Instead, we take three copies as in the proof of Lemma~\ref{lem.project}. This is essentially \cite[Lemma 11]{ABD12} with minor notational changes. Since $\tr \, A$ does not vanish identically, we can start with an open $V\subset X$ in which $\tr \, A \neq 0$ and $K=\overline{V}$ such that $K\cap T(K)=K\cap T^2(K)=\emptyset.$

Let $A_j=\frac{1}{a_j}\begin{bmatrix}t_j&-1\\a_j^2&0\end{bmatrix},t_j\in\bbR,j=1,2,3.$ A computation gives
$$
A_3A_2A_1=\frac{1}{a_1a_2a_3}\begin{bmatrix}t_1t_2t_3-a_2^2t_1-a_1^2t_3)&a_2^2-t_2t_3\\a_3^2t_2t_1-a_1^2a_3^2&-a_3^2t_2\end{bmatrix}=\begin{bmatrix}p&q\\r&s\end{bmatrix},
$$
where $s\neq 0,$ implies $t_1=-\frac{r+\frac{a_1a_3}{a_2}}{s}, t_2=-\frac{a_1a_2s}{a_3}, t_3=\frac{a_3^2q-\frac{a_2a_3}{a_1}}{s}$. Define $\eta:\mathcal{J}\to \mathcal{L}\subset \mathrm{SL}(2,\bbR)$, where any $\begin{bmatrix}p&q\\r&s\end{bmatrix}\in\mathcal{L}$ satisfies $s\neq 0,$ by $$\eta: \mathcal{J}\to \mathcal{L}$$
$$
(A_1,A_2,A_3)\to A_3A_2A_1.
$$
Then $\eta$ is an analytic homeomorphism and its inverse is well defined.

Let $\mathcal{W}_{A,K}$ be a neighborhood of $A$ in $C_{A,K}$. For $B\in\mathcal{W}_{A,K}$ , let $\Phi(B)(\omega)=A(\omega)$ if $\omega\notin\bigcup_{i=-1}^{1}T^{i}(K)$ and for $\omega\in K$, let 
$$
(\Phi(B)(T^{-1}(\omega)),\Phi(B)(\omega),\Phi(B)(T(\omega)))=\eta^{-1}(B(T^{-1}(\omega))\cdot B(\omega)\cdot B(T(\omega))).
$$
Let $\Psi(B)(\omega)=id$ for $\omega\notin K\cup T(K)$ and let $\Psi(B)(\omega)=\Phi(B)(T^{-1}(\omega))\cdot[B(T^{-1}(\omega))]^{-1}$ for $\omega\in K$. For $\omega\in T(K)$, let 
$$
\Psi(B)(\omega)=\Phi(B)(T^{-1}(\omega))\cdot\Phi(B)(T^{-2}(\omega))\cdot[B(T^{-2}(\omega))]^{-1}\cdot[B(T^{-1}(\omega))]^{-1}
$$ 
and all required properties follow.
\end{proof}

\begin{proof}[Proof of Lemma \ref{lem.JprojectionLem}]
Let $V\subset \Omega$ be a non-empty open subset such that letting $K=\overline{V}$, $K\cap T(K)=\emptyset$, and $K\cap  T^2(K)=\emptyset.$ Then let $\Phi_{A,V}:\mathcal{W}_{A,V}\to C_{A,K}(\Omega,\mathrm{SL}(2,\bbR))$ and $\Psi_{A,V}:\mathcal{W}_{A,V}\to C(\Omega,\mathrm{SL}(2,\bbR))$ be given by Lemma~\ref{lem.ToLocal}. Let $\Phi_{A,K}:\mathcal{W}_{A,K}\to C(\Omega,\mathcal{J})$ and $\Psi_{A,K}:\mathcal{W}_{A,K}\to\mathrm{SL}(2,\bbR)$ given by Lemma~\ref{lem.KeyLem}. Let $\mathcal{W}$ be the domain of $\Phi_{A,K}\circ\Phi_{A,V}$, $\Phi=\Phi_{A,K}\circ\Phi_{A,V}$ and $\Psi=(\Psi_{A,K}\circ\Phi_{A,V})\cdot\Psi_{A,V}$, then the result is proved. 
\end{proof}

\section{Opening Gaps for Jacobi and CMV Matrices in the Continuous Case}\label{mainProof} 

Now we are ready to give the proof of Theorem~\ref{thm.main1} and Theorem~\ref{thm.main3}. We note that we can restrict our attention to the case where $\Omega$ is not finite. Indeed, if $\Omega$ is finite, then we are dealing with the periodic case since $T$ is assumed to be minimal and hence has to be of the form $T \omega = \omega + 1$ on $\Omega = \mathbb{Z} / (p\mathbb{Z})$.  Thus, generic gap opening in the form stated in Theorems~\ref{thm.main1} and \ref{thm.main3} is already known; see Simon \cite{Simon76} (where the statement is proved for continuum Schr\"odinger operators -- the reader can verify that the same argument yields the desired statement in the case of Jacobi matrices) and Simon \cite{Simon2} (see Theorem~11.13.1 covering the CMV case in the form needed here).

Let us proceed to the proofs under the assumption justified above. For the Jacobi case:

\begin{proof}[Proof of Theorem \ref{thm.main1}]
Take a sampling function $b \in C(\Omega,\bbR)$ and a label $\ell\in\mathcal{L}_{S}\cap(0,1)$. Suppose that $\ell$ is the label of a collapsed gap of $\Sigma_{b}$, that is, there exists a unique $E_0\in\bbR$ such that 
$$
k_{b}(E_0)=\ell.
$$ 
Let 
$$
A=\frac{1}{a}\begin{bmatrix}E_0-b&-1\\a^2&0\end{bmatrix}.
$$

Since $2\rho_{b}(E_0)=1-k_{b}(E_0)=1-\ell$, compare \cite{DFZ23}, we can apply Lemma~\ref{thm.AUH} to obtain a continuous path $A_{t},t\in[0,1]$ with $A_{0}=A$ and $A_{t}\in\mathcal{UH}$ for each $t>0$. More importantly, $A_t$ preserves the label.

Let $\tilde{A}_{t}=\Phi(A_{t})$ for small enough $t$, where $\Phi$ is given by Lemma~\ref{lem.JprojectionLem} applied to $A_0$.  Since $\tilde{A}_{t}\in\mathcal{J}$, there exists a continuous path $b_{t}\in C(\Omega,\bbR)$ such that 
$$
\tilde{A}_{t}=\frac{1}{a}\begin{bmatrix}E_0-b_t&-1\\a^2&0\end{bmatrix}.
$$
For each $t>0$, we have $\tilde{A}_{t}\in\mathcal{UH}$ and therefore $E_0\notin\Sigma_{b_{t}}$. Thus $\ell$ is the label of an open gap of $\Sigma_{b_t}$ for $t>0$.
\end{proof}

For the CMV case:

\begin{proof}[Proof of Theorem \ref{thm.main3}]
Take a sampling function $v\in C(\Omega,\bbD)$ and a label $\ell\in\mathcal{L}_S\cap(0,1)$. Suppose that $\ell$ is the label of a collapsed gap of $\Sigma_{v}$, that is, there exists a unique $z_0=e^{i\theta_0}\in\partial \bbD$ such that 
$$
k_{v}(z_0)=\ell.
$$ 
Let 
$$
A=\frac{1}{\sqrt{1-|v|^2}}\begin{bmatrix}z_0^{\frac{1}{2}}&-\overline{v}z_0^{\frac{1}{2}}\\-vz_0^{\frac{1}{2}}&z_0^{-\frac{1}{2}}\end{bmatrix}.
$$

Since $2\rho_{v}(z_0)=k_{v}(z_0)=\ell$, see \cite[Chapter 8.3]{Simon2}, we can apply Lemma \ref{thm.AUH} to obtain a continuous path $A_{t},t\in[0,1]$ with $A_{0}=A$ and $A_{t}\in\mathcal{UH}$ for each $t>0.$ More importantly, $A_t$ preserves the label.

Let $\tilde{A}_{t}=\Phi(A_{t})$ for small enough $t$, where $\Phi$ is given by Lemma \ref{lem.project} applied to $A_0$. Since $\tilde{A}_{t}\in\mathcal{S}_{\theta_0}$, there exists a continuous path $v_{t}\in C(\Omega,\bbD)$ such that 
$$
\tilde{A}_{t}=\frac{1}{\sqrt{1-|v_t|^2}}\begin{bmatrix}z_0^{\frac{1}{2}}&-\overline{v_t}z_0^{-\frac{1}{2}}\\-v_tz_0^{\frac{1}{2}}&z_0^{-\frac{1}{2}}\end{bmatrix}.
$$
For each $t>0$, we have $\tilde{A}_{t}\in\mathcal{UH}$ and therefore $z_0\notin\Sigma_{v_{t}}$. Thus $\ell$ is the label of an open gap.
\end{proof}

\section{Opening Gaps for Jacobi and CMV Matrices in the Analytic Case}\label{analyticCase}

In this section, we will show that in the subcritical region (see Definition \ref{def.global}), for both Jacobi and CMV matrices, the boundaries of resonance tongues are analytic. As a corollary of analytic tongue boundaries and an argument of transversality at the tongue tips, we give a proof for generic gap opening in the subcritical region for both Jacobi and CMV matrices with analytic sampling functions and {\it Diophantine} frequencies. 

Recall that $\alpha\in\T$ is called Diophantine if there exist constants $\kappa, \tau > 0$ such that
$$
\inf \limits_{j \in\mathbb{Z}} \vert n\alpha - j \vert \geq \frac{\kappa}{\vert n\vert^{\tau}}
$$
for all $n \in \mathbb{Z} \backslash \{0\}$. Let $DC$ be the set of all Diophantine frequencies $\alpha$, it is well known that $DC$ is of full measure.

In the following definition, we consider a one parameter family of Jacobi or CMV matrices by assigning a coupling constant to the sampling function, that is $\delta b$ for Jacobi case and $\delta h$ for CMV case, where $\delta$ varies in $[0,1]$.
\begin{definition}[Resonance tongue]
Let $(\alpha,A_{\mu}(x))$ be a $\chi$-valued cocycle, where $\chi=\mathrm{SL}(2,\bbR)$ or $\mathrm{SU}(1,1)$ and $\mu=(E,\delta)$ is the multiparameter, where $E$ is the spectral parameter and $\delta$ is the coupling constant.
The resonance tongue corresponding to a label $k\in\mathbb{Z}$ is defined as
$$\mathcal{R}_{k}=\{\mu\in\bbR^2:2\rho(\alpha,A_{\mu})=k\alpha\}.$$
Its boundaries are  functions $E^{k}_{\pm}=E^{k}_{\pm}(\delta)$ of the coupling constants. We say $\mathcal{R}_{k}$ has analytic tongue boundaries if $E^{k}_{\pm}(\delta)$ are real analytic functions of $\delta.$
\end{definition}

\begin{definition}[Cocycle classification]\label{def.global} Let $T:\T\to\T$ be the irrational rotation $Tx=x+\alpha,\alpha\in\bbR\setminus\bbQ$,
and $(\alpha,A)$ be an analytic $\chi$-valued cocycle that is not uniformly hyperbolic. Then  $(\alpha,A)$ is said to be
\begin{enumerate}[itemsep=1ex]
    \item \emph{Supercritical}, if $\sup_{z}\Vert A^n(z)\Vert$ grows exponentially.
    \item \emph{Subcritical}, if there exists a uniform sub-exponential upper bound on the growth of $\Vert A^n(z)\Vert$ through some band $|\Im z|<\epsilon.$ 
    \item \emph{Critical}, otherwise.
\end{enumerate}
\end{definition}

Let's first introduce the following assumptions and a criterion of \cite{LDZ22}:

Let $\mu_0=(E_0,\delta_0)$ be any fixed parameter such that the cocycle $(\alpha,A_{\mu_0})$ is subcritical and let $\mu$ vary in its neighborhood
\begin{itemize}
\item[(H1):]  $\rho(\alpha,A_{\mu})$ satisfies $2 \rho(\alpha,A_{\mu_{0}})=\langle k,\alpha\rangle\mod\mathbb{Z}$ and is a non-decreasing (or non-increasing) function of $E$ in a neighborhood of $E_{0}$.
\item[(H2):]  $(\alpha,A_{\mu_{0}})$ is not uniformly hyperbolic.
\item[(H3):]  There exists a neighborhood of $\mu_0$ such that $(\alpha,A_{\mu})$ is subcritical.
\end{itemize}
We further assume  that the cocycle $(\alpha,A_{\mu_0})$ is reducible, that is, there exists $B\in C^{\omega}(\T,\mathrm{SL}(2,\bbR))$ such that 
$$B(\omega+\alpha)A_{\mu_0}(\omega)[B(\omega)]^{-1}=\begin{bmatrix}1&c\\0&1\end{bmatrix}.$$
Denote $\mu-\mu_0=(\Delta,\delta)$ and let $P(\omega)$ be the collection of the linear terms of $(\Delta, \delta)$, that is,
$$B(\omega+\alpha)A_{\mu}(\omega)[B(\omega)]^{-1}=\begin{bmatrix}1&c\\0&1\end{bmatrix}(id+P(\omega)+O_2(\Delta,\delta)),$$
where $O_{2}$ is the collection of higher order terms. In the Jacobi case, the $O_2$ term is actually zero. In the following context, we use $[\cdot]$ for the average over $\T$ in Lebesgue measure.
\begin{lemma}\label{twoCases}
Let $\alpha \in \mathrm{DC}, \mu_{0}=(E_{0},\delta_{0})$ be fixed and assume that for a suitable neighborhood of $\mu_{0}$, the hypotheses {\rm (H1), (H2)} and {\rm (H3)} are satisfied. Then we have the following:
\begin{enumerate}
\item If $c\neq 0$ and  the coefficient of $\Delta$ in $[P_{21}]$ is strictly non-zero,
then the tongue boundaries $E_{\pm}=E_{\pm}(\delta)$ are real analytic in a neighborhood of $\delta_{0}$.
\item  If $c=0$ and  the coefficient of $\Delta^{2}$ in the expression of $\det [P]$ is non-zero, then the two tongue boundaries $E_{\pm}=E_{\pm}(\delta),$ with $E_{\pm}(\delta_{0})=E_{0}$ are real analytic in a neighborhood of $\delta_{0}$.
\end{enumerate}
\end{lemma}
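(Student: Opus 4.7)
The plan is to verify the two cases of the analyticity criterion of \cite{LDZ22} by combining reducibility at $\mu_0$, a further quantitative almost-reducibility step that kills the non-resonant Fourier modes, and a finite-dimensional implicit function argument.

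First, reducibility of $(\alpha, A_{\mu_0})$ gives $B \in C^{\omega}(\T,\chi)$ with
$$B(\omega+\alpha)\,A_{\mu_0}(\omega)\,B(\omega)^{-1} = M_0 := \begin{bmatrix}1 & c \\ 0 & 1\end{bmatrix}.$$
For $\mu = \mu_0 + (\Delta,\delta)$ in a small neighborhood the same conjugation yields
$$\tilde A_{\mu}(\omega) = M_0\bigl(I + P(\omega) + O_2(\Delta,\delta)\bigr),$$
with $P$ linear in $(\Delta,\delta)$. The boundary of the resonance tongue $\mathcal{R}_{k}$ is precisely the locus where simultaneously $2\rho(\alpha,\tilde A_{\mu}) = \langle k,\alpha\rangle$ and $(\alpha,\tilde A_{\mu})$ sits on the boundary of uniform hyperbolicity.

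Second, I would run a quantitative almost-reducibility scheme (in the spirit of Hou--You and Leguil--You--Zhao--Zhou) to solve the cohomological equations for every non-resonant Fourier mode of $P$. Since hypothesis (H3) guarantees subcriticality throughout a neighborhood of $\mu_0$ and $\alpha \in DC$ controls the small divisors, the scheme converges with uniform analyticity estimates over $(\Delta,\delta)$ in a complex neighborhood of the origin. The outcome is an analytic family of conjugacies $Z(\cdot;\Delta,\delta)$ such that
$$Z(\omega+\alpha;\Delta,\delta)\,\tilde A_{\mu}(\omega)\,Z(\omega;\Delta,\delta)^{-1} = M_0\bigl(I + \hat P(\omega;\Delta,\delta)\bigr),$$
where $\hat P$ contains only the constant Fourier mode $[P]$, the $k$-resonant mode, and a higher-order analytic remainder.

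Third, after performing a (possibly $k$-resonant) constant rotation that averages away the $k$-resonant mode, the UH boundary is cut out by a single equation $F(\Delta,\delta) = 0$ with $F$ real-analytic near the origin. A direct expansion identifies the leading Taylor coefficients of $F$: in Case 1 ($c \neq 0$) the coefficient of $\Delta$ in $F$ is a non-zero constant multiple of the coefficient of $\Delta$ in $[P_{21}]$, so the implicit function theorem produces an analytic curve through $\mu_0$, and splitting according to the sign of $\tr(\cdot)-2$ yields the two real-analytic branches $E_\pm(\delta)$. In Case 2 ($c = 0$) the linear coefficient in $\Delta$ vanishes, while the quadratic coefficient equals a non-zero constant times the coefficient of $\Delta^2$ in $\det[P]$; Weierstrass preparation (or the quadratic formula applied to the leading polynomial in $\Delta$) then produces two real-analytic branches meeting at the tip.

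The main obstacle is the uniform quantitative almost reducibility in Step 2: one must propagate analytic dependence on $(\Delta,\delta)$ through the KAM iteration, which couples the Diophantine control on small divisors with the subcritical bound from (H3). Once this is in hand, Steps 3--4 reduce to a classical implicit function / Weierstrass preparation argument, and the identification of the leading Taylor coefficients of $F$ with the quantities singled out in (1) and (2) is a careful but essentially routine computation tracking the averaging in Step 3.
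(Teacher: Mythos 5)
The paper does not prove Lemma~\ref{twoCases} itself: it states it as a known result and refers the reader to \cite{LDZ22} for the detailed argument. Your reconstruction --- starting from the assumed reducibility at $\mu_0$, running a quantitative KAM/almost-reducibility scheme with analytic parameter dependence to flatten the non-resonant modes of $P$, and then closing with an implicit-function / Weierstrass-preparation argument on the resulting finite-dimensional boundary equation $F(\Delta,\delta)=0$ --- is exactly the kind of argument that \cite{LDZ22} (following Leguil--You--Zhao--Zhou) carries out, and the identification of the leading Taylor coefficients of $F$ with the coefficient of $\Delta$ in $[P_{21}]$ (when $c\neq 0$) resp.\ the coefficient of $\Delta^2$ in $\det[P]$ (when $c=0$) is the right mechanism. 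Two small imprecisions worth flagging: reducibility of $(\alpha,A_{\mu_0})$ is an explicit standing assumption just before the lemma, not something to be derived in the proof; and the Diophantine condition on $\alpha$, not hypothesis (H3), is what controls the small divisors in the KAM iteration --- (H3) serves instead to keep the nearby cocycles $(\alpha,A_\mu)$ within the subcritical regime so that the near-constant form (via the global-to-local reduction of Lemma~\ref{ARC}) and the rotation-number monotonicity persist over the whole neighborhood. Also, once the initial reducibility has absorbed the $k$-resonant frequency into the constant block $\begin{bmatrix}1&c\\0&1\end{bmatrix}$, the subsequent KAM step only needs to remove non-constant modes of $P$; there is no separate ``$k$-resonant mode'' to retain. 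These are presentational rather than substantive issues, and the structure of your argument is sound.
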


One can consult \cite{LDZ22} for the detailed proof. We also need the following global to local reduction of \cite{LYZZ}:
\begin{lemma}\label{ARC}
Let $\alpha\in DC$, assume that $(\alpha, A_{E,\delta})$ is subcritical, then there exist $Z_{E}\in C^{\omega}(2\T,\mathrm{SL}(2,\bbR))$, $f_{E}(\theta)\in C^{\omega}(\T,\mathrm{sl}(2,\bbR))$, $\phi(E)\in\bbR$, $D_{\phi(E)}=\mathrm{diag}\{e^{i\phi(E)},e^{-i\phi(E)}\}$ such that
\begin{equation}\label{eq.conjugate}
Z^{-1}_{E}(\omega+\alpha)A_{E,\delta}(\theta)Z_{E}(\omega)=D_{\phi(E)}e^{f_{E}(\theta)}
\end{equation}
with $\|f_{E}\|_{h}<\eta,\|Z_{E}\|_{h}<\Gamma(\alpha,\eta,\lambda)$ for some constant $\Gamma$.
\end{lemma}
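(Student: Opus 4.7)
\textbf{Proof proposal for Lemma \ref{ARC}.} The statement is a quantitative version of Avila's almost reducibility conjecture (ARC, now a theorem) in the subcritical regime over a Diophantine rotation, combined with a canonical diagonalization of the constant part. I would invoke ARC as the first step: for Diophantine $\alpha$, any analytic subcritical $\SL(2,\bbR)$-cocycle $(\alpha, A_{E,\delta})$ is almost reducible in the following quantitative sense. For any prescribed $\eta > 0$ there exist a width $h > 0$, a conjugacy $B_E \in C^{\omega}(2\T, \SL(2,\bbR))$ (allowing the double cover to accommodate a possible $\bbZ_2$ monodromy in the eigenvector selection), a constant matrix $C_E \in \SL(2,\bbR)$, and an error $g_E \in C^{\omega}(\T, \mathrm{sl}(2,\bbR))$ with $\|g_E\|_h < \eta$, such that
\[
B_E^{-1}(\omega+\alpha)\, A_{E,\delta}(\omega)\, B_E(\omega) = C_E\, e^{g_E(\omega)},
\]
while $\|B_E\|_h$ is bounded by some $\Gamma = \Gamma(\alpha, \eta, \lambda)$, the quantitative output of the underlying KAM-type iteration.

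Second, I would identify $C_E$ as elliptic and then absorb its diagonalization into the conjugacy. Subcriticality forces the Lyapunov exponent of the conjugated cocycle to vanish through a complex band; combined with the smallness of $g_E$, this excludes hyperbolic $C_E$ (which would yield exponential growth) and nontrivial parabolic $C_E$ (which is non-generic and can be eliminated by an arbitrarily small further conjugation, absorbed into the error). Thus $C_E$ is elliptic with $\tr\, C_E = 2\cos\phi(E)$ for some $\phi(E) \in \bbR$ coinciding, up to the conventional factor of two, with the fibered rotation number. Via the explicit isomorphism $M^{-1}\SU(1,1)M = \SL(2,\bbR)$ recorded in \eqref{eq.su11sl2r}, $C_E$ is conjugate to $D_{\phi(E)} = \mathrm{diag}(e^{i\phi(E)}, e^{-i\phi(E)})$ through some constant $P_E$. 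Setting $Z_E(\omega) = B_E(\omega) P_E$ and $f_E(\omega) = P_E^{-1} g_E(\omega) P_E$ then produces \eqref{eq.conjugate}, both sides being interpreted inside $\SU(1,1)$ via the same identification. The double cover $2\T$ is forced precisely here, because the eigenvector choice implicit in $P_E$ need not be single-valued on $\T$; the desired norm bounds $\|f_E\|_h < \eta$ (after absorbing a constant factor into the threshold) and $\|Z_E\|_h < \Gamma(\alpha, \eta, \lambda)$ follow directly from those of $g_E$ and $B_E$.

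The main obstacle is the heart of ARC itself: constructing $B_E$ with uniform control on its complex extension. This is where Avila's full KAM machinery from \cite{Avila10, Avila24} is required, together with the quantitative refinements of \cite{LYZZ}, which I would cite rather than reprove. A secondary technicality is to justify the use of the double cover $2\T$ as unavoidable in general; this reflects a $\bbZ_2$ monodromy acquired by the diagonalizing conjugacy whenever the rotation number $\phi(E)$ is irrationally related to $\alpha$.
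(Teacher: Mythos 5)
The paper does not actually prove Lemma~\ref{ARC}: it is imported verbatim from \cite{LYZZ} as a known ``global to local reduction,'' with only the remark that a subcritical cocycle can be conjugated to a near-constant one and that the $\SU(1,1)$ version is obtained by parsing. Your outline --- Avila's almost reducibility theorem in the subcritical regime \cite{Avila10, Avila24} with the quantitative control of \cite{LYZZ}, followed by putting the constant part into rotation form $D_{\phi(E)}$ and absorbing the diagonalizing constant into $Z_E$ over the double cover $2\T$ --- is exactly the route taken in the cited source, so your proposal is consistent with how the statement is justified here. One caveat: the step where you ``exclude'' hyperbolic and parabolic constants is too quick as stated, since $C_E$ may be hyperbolic or parabolic with exponent or off-diagonal part comparable to $\eta$, in which case no contradiction with subcriticality arises; the correct handling (already built into the quantitative statements you cite) is to absorb such a nearly elliptic constant into the error term $e^{f_E}$, at the cost of a bounded factor in the threshold, rather than to rule it out.
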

The point of Lemma \ref{ARC} is that a subcritical cocycle can be conjugate to a near constant one. It is straightforward to parse this result to an $\mathrm{SU}(1,1)$ version. 
\subsection{Jacobi Case}
Let $a(\omega)>0$ be fixed and $b(\omega)\in C^{\omega}_{r}(\T,\bbR)$ for some $r>0$. In the following context, we will consider $b_{\delta}(\omega)=\delta b(\omega)$ and denote $(\alpha,A_{E,\delta}(\omega))$
the corresponding Jacobi cocycle. 
\begin{theorem}[Analytic tongue boundaries]\label{thm.JacobiBound}
Let $\alpha\in DC$ and
assume that $(\alpha,A_{\mu_0})$ is subcritical with $2\rho(\alpha,A_{\mu_0})=k\alpha$, then the boundaries of $\mathcal{R}_{k}$ are analytic functions of $\delta$ in the neighborhood of  $(E_0,\delta_0)$ where the cocycle $(\alpha,A_\mu)$ is subcritical. 
\end{theorem}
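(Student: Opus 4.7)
The plan is to combine Lemma~\ref{ARC} with Lemma~\ref{twoCases}, using the subcritical almost reducibility to turn the near-constant normal form at the resonant parameter into a genuinely reducible one, and then verifying the non-degeneracy conditions by a direct computation that exploits the specific structure of the Jacobi cocycle.

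First, I apply Lemma~\ref{ARC} at $\mu_0=(E_0,\delta_0)$ to obtain $Z_{E_0}\in C^\omega(2\T,\mathrm{SL}(2,\bbR))$ and $f_{E_0}\in C^\omega(\T,\mathrm{sl}(2,\bbR))$ with $\|f_{E_0}\|_h$ as small as desired, satisfying $Z_{E_0}^{-1}(\omega+\alpha)A_{\mu_0}(\omega)Z_{E_0}(\omega)=D_{\phi(E_0)}e^{f_{E_0}(\omega)}$. The resonance $2\rho(\alpha,A_{\mu_0})=k\alpha$, together with $\alpha\in DC$ and a finite KAM iteration (which converges because the constant rotation $D_{\phi(E_0)}$ is rationally aligned with the base rotation), upgrades almost reducibility to full reducibility: there is $B\in C^\omega(\T,\mathrm{SL}(2,\bbR))$ with $B(\omega+\alpha)A_{\mu_0}(\omega)B(\omega)^{-1}=\bigl(\begin{smallmatrix} 1 & c \\ 0 & 1 \end{smallmatrix}\bigr)$ for some $c\in\bbR$, placing us exactly in the setting required by Lemma~\ref{twoCases}.

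Second, I verify hypotheses (H1)--(H3) in a neighborhood of $\mu_0$: (H1) is Sturm/oscillation monotonicity of $\rho(\alpha,A_{E,\delta})$ in $E$ at fixed $\delta$; (H2) is immediate from subcriticality, which forces the Lyapunov exponent to vanish and hence precludes uniform hyperbolicity; (H3) is Avila's global theory, which guarantees openness of the subcritical regime. Third, I check the non-degeneracy conditions. The Jacobi-specific feature is that $\partial_E A_{E,\delta}(\omega)=a(\omega)^{-1}\bigl(\begin{smallmatrix} 1 & 0 \\ 0 & 0 \end{smallmatrix}\bigr)$ and the dependence of $A_{E,\delta}$ on $\Delta$ is strictly linear, so no pure $\Delta^2$ term appears in the expansion of $B(\omega+\alpha)A_\mu(\omega)B(\omega)^{-1}$, consistent with the remark in the excerpt that the $O_2$ correction in the Jacobi case carries no $\Delta^2$ piece. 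A direct computation then shows that the coefficient of $\Delta$ in $[P_{21}]$ equals an integral of the form $\int_\T a(\omega)^{-1}Q(B(\omega))\,d\omega$ with $Q$ a strictly positive quadratic form in the relevant entries of $B$; up to a positive factor this is $\partial_E\rho(\alpha,A_{E,\delta_0})|_{E=E_0}$, which is strictly positive whenever the reducible cocycle is not UH. This verifies alternative (1) of Lemma~\ref{twoCases} when $c\neq 0$. When $c=0$, the same positivity together with the absence of an $O_2(\Delta)$ term forces the $\Delta^2$-coefficient of $\det[P]$ to be non-zero, giving alternative (2). Lemma~\ref{twoCases} then yields analyticity of $E^k_\pm(\delta)$ near $\delta_0$.

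The hard part is the upgrade from almost reducibility to reducibility at the resonance $2\rho=k\alpha$: in the Diophantine strictly subcritical setting this follows from Eliasson-type/Avila--Jitomirskaya arguments, but a careful finite KAM step is required to absorb the resonance into the normal form. A secondary subtlety is the $c=0$ case of the non-degeneracy check, where no first-order splitting of the tongue is available; here the absence of an $O_2(\Delta)$ contribution --- a purely Jacobi phenomenon --- is what allows one to read off the required non-vanishing of $\det[P]$ at quadratic order directly from the positivity of $\partial_E\rho$.
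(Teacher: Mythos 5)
Your route is the same as the paper's: apply Lemma~\ref{ARC} at $\mu_0$, use the resonance $2\rho(\alpha,A_{\mu_0})=k\alpha$ together with Eliasson's KAM scheme to upgrade to full reducibility $B(\omega+\alpha)A_{\mu_0}(\omega)[B(\omega)]^{-1}=\bigl[\begin{smallmatrix}1&c\\0&1\end{smallmatrix}\bigr]$ (note that $B$ in general lives on the double cover $2\T$, not on $\T$), and then feed the linear term $P$ into Lemma~\ref{twoCases}. The gap is in the verification of the non-degeneracy hypotheses of Lemma~\ref{twoCases}, which is the actual content of the proof. Writing $B=(b_{ij})$, the computation gives $P_{21}=-(\Delta-\delta b)\,b_{22}^2$, $P_{12}=(\Delta-\delta b)\,b_{12}^2$, $P_{11}=-(\Delta-\delta b)\,b_{12}b_{22}$ (the factor $a^{-1}$ cancels via the conjugacy identity, e.g. $b_{21}(\omega+\alpha)=-a(\omega)b_{22}(\omega)$). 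For $c\neq 0$ what must be shown is $[b_{22}^2]\neq 0$. Your justification --- that this is the integral of a ``strictly positive quadratic form'', or alternatively a positive multiple of $\partial_E\rho$ at $E_0$ --- does not work: the integrand $b_{22}^2$ is only nonnegative, so the issue is precisely to rule out $b_{22}\equiv 0$, and the identification with $\partial_E\rho>0$ is unjustified (at a gap edge, which is where $c\neq 0$ occurs, the rotation number is constant on the gap side and typically has square-root behavior on the spectral side, so no such strictly positive derivative is available). The correct argument is elementary and structural: if $b_{22}\equiv 0$, the reducibility identity forces another entry of $B$ to vanish identically, contradicting $\det B\equiv 1$.

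The $c=0$ case has the larger hole. There the $\Delta^2$-coefficient of $\det[P]$ is, up to sign, the Gram determinant $[b_{12}^2][b_{22}^2]-[b_{12}b_{22}]^2$, and its non-vanishing does not follow from ``the same positivity together with the absence of an $O_2(\Delta)$ term,'' as you assert. By Cauchy--Schwarz this quantity is $\leq 0$... with equality exactly when $b_{12}$ and $b_{22}$ are proportional, so one must prove that these two entries are linearly independent as functions; the paper does this by inserting proportionality back into the identity $B(\omega+\alpha)A_{\mu_0}(\omega)[B(\omega)]^{-1}=id$, propagating it to the other column via minimality, and contradicting the invertibility of $B$. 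Without this step Lemma~\ref{twoCases}(2) cannot be invoked, and your argument establishes nothing at a collapsed gap. Your verification of (H1)--(H3) and the observation that the Jacobi perturbation is exactly linear in $(\Delta,\delta)$ (so the $O_2$ term vanishes) are correct and agree with the paper, but they are the easy parts.
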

\begin{proof}By Lemma \ref{ARC}, there exists $Z_1\in C^{\omega}(2\T,\mathrm{SL}(2,\bbR))$
such that $$Z_1(\omega+\alpha)A_{E_0,\delta_0}(\omega)[Z_1(\omega)]^{-1}=A'(\omega),$$
where $A'\in C^{\omega}(\T,\mathrm{SL}(2,\bbR))$ is close to a constant. For $(\alpha,A')$, by Eliasson's KAM scheme \cite{Eli92, HA09} and that $2\rho(\alpha,A_{\mu_0})=k\alpha$, there exists $Z_2\in C^{\omega}(2\T,\mathrm{SL}(2,\bbR))$ such that $$Z_2(\omega+\alpha)A'(\omega)[Z_2(\omega)]^{-1}=\begin{bmatrix}1&c\\0&1\end{bmatrix}.$$
Let $B=Z_2Z_1\in C^{\omega}(2\T,\mathrm{SL}(2,\bbR))$, then it follows that  
\begin{equation}\label{eq.conj1}B(\omega+\alpha)A_{E_0, \delta_0}(\omega)[B(\omega)]^{-1}=\begin{bmatrix}1&c\\0&1\end{bmatrix}.
\end{equation}
Perturb $(E_0, \delta_0)$ by $(E_0+\Delta,\delta_0+\delta )$, where $\Delta,\delta\in\bbR$ are small.
Then the same $B$ conjugates the cocycle $(\alpha,A_{E_0+\Delta,\delta_0+\delta })$ to \begin{equation}\label{eq.conj2}B(\omega+\alpha)A_{E_0+\Delta,\delta_0+\delta}(\omega)[B(\omega)]^{-1}=\begin{bmatrix}1&c\\0&1
\end{bmatrix}(id+ P(\omega,\mu)).
\end{equation} In order to apply Lemma \ref{twoCases}, we need to compute the exact expression of $P$.

It follows from \eqref{eq.conj1}, \eqref{eq.conj2}, and a straightforward computation that for $P(\omega)=\begin{bmatrix}P_{11}&P_{12}\\P_{21}&-P_{11}\end{bmatrix}$, we have
\begin{equation}
\begin{aligned}
&P_{11}=-(\Delta-\delta b)(b_{12}b_{22})\\
&P_{12}=(\Delta-\delta b)b_{12}^2\\
&P_{21}=-(\Delta-\delta b)b_{22}^2.
\end{aligned}
\end{equation}
In order to apply Lemma \ref{twoCases}, if $c\neq 0$,  we need to verify that $[b_{22}^2]\neq 0$. Since $B$ is analytic, if $[b_{22}^2]=0,$ then $b_{22}\equiv 0$, which leads to $b_{12}\equiv 0$ by \eqref{eq.conj1}. This contradiction implies that $[b_{22}^2]>0$.
If $c=0,$  we need to verify that $$-[b_{12}^2][b_{22}^2]+[b_{12}b_{22}]^2\neq 0.$$

By the Cauchy-Schwarz inequality, $-[b_{12}^2][b_{22}^2]+[b_{12}b_{22}]^2\leq 0$,  equality happens if and only if $b_{12}(\omega)=b_{22}(\omega)$ for all $\omega$. This is again impossible due to \eqref{eq.conj1}. Indeed, assuming that 
$$B(\omega+\alpha)A_{E_0,\delta_0}(\omega)[B(\omega)]^{-1}=id,$$
we have $b_{11}(\omega+\alpha)=-a(\omega+\alpha)b_{12}(\omega)=-a(\omega)b_{22}(\omega)=b_{21}(\omega+\alpha)$, by continuity and minimality, $b_{11}(\omega)=b_{12}(\omega)$ which contradicts the linear independence. Hypotheses (H1)-(H3) are trivial for $(\alpha,A_{\mu})$. By Lemma \ref{twoCases}, the tongue boundaries of $\mathcal{R}_k$ are analytic for all $k\in\mathbb{Z}$ in the subcritical region.
\end{proof}
\medskip

To show that the boundaries of $\mathcal{R}_{k}$ do not coincide all the time, we need a transversality argument for the case $c=0$ in each connected component of subcritical region. We noticed that the computations become much easier in $\mathrm{SU}(1,1).$ We need the following result of \cite{LDZ22}:
\begin{lemma}\label{lem.nonzero}
If $\det[P]>0$, then the rotation number
$\rho(\alpha,(id+P(\cdot))$
 is strictly non-zero.
\end{lemma}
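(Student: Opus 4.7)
The first observation is structural: the matrix-valued function $P(\omega)$ takes values in $\mathrm{sl}(2,\bbR)$. Indeed, the defining identity $B(\omega+\alpha)A_{\mu}(\omega)[B(\omega)]^{-1}=(id+P(\omega)+O_2)$ (with $c=0$, where the lemma is needed) puts $id+P+O_2$ in $\mathrm{SL}(2,\bbR)$, and the expansion $\det(id+P+O_2)=1+\mathrm{tr}\,P+O_2=1$ forces $\mathrm{tr}\,P\equiv 0$ at linear order in the small parameters $(\Delta,\delta)$. Hence $[P]$ is also traceless, so the hypothesis $\det[P]>0$ means that $[P]$ has eigenvalues $\pm i\sqrt{\det[P]}\neq 0$, and the constant matrix $id+[P]$ is elliptic with rotation angle $\arctan\!\bigl(\sqrt{\det[P]}\bigr)\neq 0$.

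The core of the proof is to transfer this elliptic behavior from the averaged constant matrix $id+[P]$ to the $\omega$-dependent cocycle $(\alpha, id+P(\cdot))$. My plan is perturbative: since we are sitting at a subcritical base parameter $\mu_0$ and $\alpha$ is Diophantine, the cocycle falls into the regime of Lemma~\ref{ARC}, i.e.\ Avila's almost reducibility theorem \cite{Avila10, Avila24}. I would apply an Eliasson-type KAM iteration \cite{Eli92, HA09}: at each step solve the cohomological equation
$$
Y(\omega+\alpha)-Y(\omega)=P(\omega)-[P]
$$
in $C^\omega(\T,\mathrm{sl}(2,\bbR))$, which is possible precisely because $\alpha\in\mathrm{DC}$ controls the small divisors and $P-[P]$ has zero mean. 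The resulting conjugacy $Z=\exp(Y)$ reduces $(\alpha,id+P)$ to $(\alpha,id+[P]+R)$ with $R$ of strictly smaller analytic norm. Iterating produces a limiting conjugation to the constant cocycle $(\alpha,id+[P])$ modulo a super-exponentially small remainder. Since conjugation preserves the fibered rotation number and the rotation number is continuous in the cocycle, one concludes
$$
\rho(\alpha,id+P(\cdot))=\rho(\alpha,id+[P])=\tfrac{1}{2\pi}\arctan\!\bigl(\sqrt{\det[P]}\bigr)\neq 0.
$$

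\textbf{Main obstacle.} The delicate point is to run the KAM scheme so that the identification of the limiting constant piece with $[P]$ itself is preserved throughout the iteration (as opposed to a distorted average), and that the accumulated higher-order corrections from $R$ at each step do not pollute the rotation number at the critical order. The Diophantine hypothesis on $\alpha$ is exactly what keeps the cohomological equations solvable in $C^\omega$ at each stage; without it the whole scheme breaks. A secondary bookkeeping issue is that the conjugating transformation $Z$ may live on the double cover $2\T$, in which case the rotation number of the reduced constant cocycle is related to the original by a factor $\tfrac{1}{2}$ — in either normalization the conclusion $\rho\neq 0$ persists as long as $\det[P]>0$.
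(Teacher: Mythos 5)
Your plan correctly identifies the skeleton: $P$ is traceless at linear order, so $\det[P]>0$ forces the averaged constant $id+[P]$ to be elliptic, and the hope is to transfer that ellipticity to the full cocycle by conjugation. The problem is that the single sentence on which the whole argument rests — that the KAM iteration ``produces a limiting conjugation to the constant cocycle $(\alpha,id+[P])$ modulo a super-exponentially small remainder'' — is not true, and the issue is not the bookkeeping you describe; it is structural.

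Concretely, after solving the cohomological equation $Y(\omega+\alpha)-Y(\omega)=P(\omega)-[P]$ and conjugating by $e^{Y}$, the cocycle becomes $(\alpha,\,id+[P]+R)$ with $\|R\|\lesssim \|P\|^{2}/\gamma^{2}$, and each subsequent step \emph{replaces} the constant part by a new constant $Q_{n}$; the limit $Q_{\infty}$ is a genuine deformation of $id+[P]$, not $id+[P]$ itself. So the identity $\rho(\alpha,id+P)=\tfrac{1}{2\pi}\arctan\sqrt{\det[P]}$ that your proof concludes with is simply not what the scheme delivers; at best one gets $Q_{\infty}=id+[P]+O(\|P\|^{2})$ and hence $\det(Q_{\infty}-id)=\det[P]+O(\|P\|^{3})$. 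This matters because $\det[P]$ is a quadratic form in $(\Delta,\delta)$ that \emph{vanishes} along the tongue boundary: near that boundary $\det[P]$ is arbitrarily small compared to $\|P\|^{2}$, so the correction $O(\|P\|^{3})$ can be of the same order as, or larger than, $\det[P]$. In that regime you cannot conclude that $Q_{\infty}$ is elliptic, and the argument stalls exactly where the lemma is needed (the lemma is invoked precisely to locate the boundary $\det[P]=0$). The Diophantine condition, which you offer as the rescue, only makes the cohomological equations solvable with controlled loss; it does nothing to prevent the renormalization of the constant part or to make the remainder small relative to $\det[P]$.

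To close the gap one needs a mechanism that keeps track of the \emph{sign} of $\det$ rather than its size, e.g. arguing by contradiction: if $\rho(\alpha,id+P)=0$, then by Eliasson-type reducibility at a Diophantine rotation the cocycle is reducible to a constant $C$ with $|\tr C|\geq 2$, and a first-order expansion of the reducing transformation forces $C-id=Z_{0}[P]Z_{0}^{-1}+O(\|P\|^{2})$, hence $\det[P]\leq 0$ up to higher order. Even this version needs care to rule out the higher-order terms overwhelming a small positive $\det[P]$; what it does \emph{not} need is the false identification of the limit constant with $id+[P]$. As written, your proposal asserts that identification, and Lemma~\ref{lem.nonzero} does not follow from it.
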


\begin{theorem}[Transversality]\label{thm.transver}
Assume that $(E_0,\delta_0)$ is such that $(\alpha,A_{E_0,\delta_0})$ is subcritical and $E_0=\mathcal{R}_{k}\cap\{(E,\delta_0)\}$.
There exists an open and dense subset of $b\in C^{\omega}(\T,\bbR)$ for which $\frac{d E^{+}_{k}}{d\delta}(\delta_0)\neq\frac{dE^{-}_{k}}{d\delta}(\delta_0).$
\end{theorem}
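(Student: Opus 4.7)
My plan is to reduce transversality to the strict positivity of the discriminant of the quadratic form $\det[P]$ that already appears in the proof of Theorem~\ref{thm.JacobiBound}, and then to verify this positivity on an open dense set of sampling functions. Because $(E_0,\delta_0)$ sits at the tip of $\mathcal{R}_k$ in the subcritical region, we are in the $c=0$ case of Lemma~\ref{twoCases}: the conjugation $B$ constructed in the proof of Theorem~\ref{thm.JacobiBound} sends $(\alpha, A_{E_0,\delta_0})$ to the identity. Writing $u := b_{12}$, $v := b_{22}$ and $(\Delta,\tau) := (E-E_0,\delta-\delta_0)$, averaging the formulas for $P_{ij}$ displayed in that proof yields
\[ [P_{11}] = -\Delta[uv] + \tau[buv], \quad [P_{12}] = \Delta[u^2] - \tau[bu^2], \quad [P_{21}] = -\Delta[v^2] + \tau[bv^2],\]
and hence $\det[P] = \mathbf{P}\Delta^2 + \mathbf{Q}\Delta\tau + \mathbf{R}\tau^2$ with $\mathbf{P}=[u^2][v^2]-[uv]^2$, $\mathbf{Q}=2[uv][buv]-[u^2][bv^2]-[bu^2][v^2]$, and $\mathbf{R}=[bu^2][bv^2]-[buv]^2$. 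The two slopes $dE^{\pm}_k/d\delta(\delta_0)$ are the roots of $\mathbf{P}s^2+\mathbf{Q}s+\mathbf{R}=0$ and are therefore distinct iff $\Delta_D := \mathbf{Q}^2-4\mathbf{P}\mathbf{R} > 0$.

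Next I introduce the Lorentzian form $\langle X,Y\rangle := \tfrac{1}{2}(X_1Y_3+X_3Y_1)-X_2Y_2$ on $\bbR^3$ and the vectors $W_0 := ([u^2],[uv],[v^2])$, $W_1 := ([bu^2],[buv],[bv^2])$. A direct check gives $\mathbf{P} = \langle W_0,W_0\rangle$, $\mathbf{R} = \langle W_1,W_1\rangle$, $\mathbf{Q} = -2\langle W_0,W_1\rangle$, and hence
\[ \Delta_D = 4\bigl(\langle W_0,W_1\rangle^2 - \langle W_0,W_0\rangle\langle W_1,W_1\rangle\bigr).\]
The $L^2$ Cauchy--Schwarz inequality together with the linear independence of $u,v$ (observed in the proof of Theorem~\ref{thm.JacobiBound} via \eqref{eq.conj1}) gives $\mathbf{P}>0$, so $W_0$ is timelike; the reverse Cauchy--Schwarz for timelike vectors in a Lorentzian form then yields $\Delta_D\geq 0$, with equality iff $W_1\in\bbR W_0$, i.e., iff some $\lambda\in\bbR$ makes $b-\lambda$ orthogonal in $L^2(\T)$ to each of $u^2$, $uv$, and $v^2$.

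It remains to show that $\{b: \Delta_D(b) > 0\}$ is open and dense. Openness follows from the analytic dependence of $B(b)$ and of the tongue tip $(E_0(b),\delta_0(b))$ on $b$, itself a consequence of Lemma~\ref{ARC} together with the implicit function theorem applied to the tip condition. For density, note that the three functionals $\tilde{b}\mapsto[\tilde{b}u^2], [\tilde{b}uv], [\tilde{b}v^2]$ on $C^\omega(\T,\bbR)$ are linearly independent: otherwise $u^2,uv,v^2$ would be linearly dependent in $C(\T)$, forcing $(u,v)$ onto a conic and contradicting $\mathbf{P}>0$. Hence, at any $b_0$ with $\Delta_D(b_0)=0$, the explicit first-order change of $W_1-\lambda W_0$ under a perturbation $b_0+\epsilon\tilde{b}$ can be steered in any direction of $\bbR^3$, while the implicit corrections coming from the variation of $u,v,E_0,\delta_0$ with $b$ are fixed linear functionals of $\tilde{b}$; a suitable $\tilde{b}$ pushes the total variation outside $\bbR W_0$ and gives $\Delta_D>0$ for all small $\epsilon>0$. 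The main obstacle is precisely this implicit $b$-dependence: without the analytic control on $B$ and on the tip supplied by Lemma~\ref{ARC} and Theorem~\ref{thm.JacobiBound}, $\Delta_D$ would not be analytic (or even continuous) in $b$; with it, the three explicit directions always suffice to break the degeneracy and establish density.
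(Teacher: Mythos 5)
The first half of your argument is sound and is essentially the paper's computation transplanted from $\mathrm{SU}(1,1)$ back to $\mathrm{SL}(2,\bbR)$: at a collapsed gap one is in the $c=0$ case of Lemma~\ref{twoCases}, the averaged linearization gives $\det[P]=\mathbf{P}\Delta^2+\mathbf{Q}\Delta\tau+\mathbf{R}\tau^2$, and the boundary slopes are the roots of this quadratic (this identification is exactly what Lemma~\ref{twoCases} and Lemma~\ref{lem.nonzero} supply, so you should cite them rather than assert it). Your Lorentzian reverse Cauchy--Schwarz derivation of $\Delta_D\geq 0$, with equality precisely when $W_1\in\bbR W_0$, is a clean substitute for the paper's appeal to the existence of real tongue boundaries. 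A minor caveat: a pointwise relation $\alpha u^2+\beta uv+\gamma v^2\equiv 0$ only places $(u,v)$ on a union of at most two lines; you need the analyticity of $B$ and connectedness of $\T$ to force a single line and thus contradict $\mathbf{P}>0$, so the linear independence of the three functionals is true but not for the one-line reason you give.

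The genuine gap is the density step. You correctly note that perturbing $b$ to $b+\epsilon\tilde b$ changes not only the explicit occurrences of $b$ in $W_1$ but also $u,v$ (through the conjugacy) and the location of the resonance; but those implicit corrections are themselves linear functionals of $\tilde b$ with values in $\bbR^3$, and the surjectivity of the explicit part $\tilde b\mapsto([\tilde b u^2],[\tilde b uv],[\tilde b v^2])$ does not exclude the possibility that the implicit part cancels it, or steers the total first-order variation of $W_1$ back into $\bbR W_0$. The sentence ``the three explicit directions always suffice to break the degeneracy'' is precisely what has to be proved, and you only assert it. This is where the paper does the actual work: it conjugates the perturbed cocycle at the unperturbed reducible point by the unperturbed conjugacy, obtaining $id+Q$ with $Q$ \emph{explicitly} linear in the perturbation $f$, and then reads off the first-order change of the reducibility data from the first step of the KAM scheme, where the linear-in-$f$ part of the new conjugation is obtained by solving cohomological equations; only this explicit control of the ``implicit'' corrections allows one to choose $f$ destroying the degeneracy. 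Your proposal needs the analogous computation (or some structural reason ruling out cancellation) to close. One thing you do get right and should keep: since $\Delta_D\geq 0$ with equality at $b_0$, a first-order argument on the scalar discriminant is vacuous, so working with $W_1$ modulo $\bbR W_0$ is the correct formulation; the missing control of the implicit first-order terms is the one, but essential, gap.
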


\begin{proof} 
Let $E_0,\delta_0$ be fixed and $M=\frac{1}{1+i}\begin{bmatrix}1&-i\\1&i\end{bmatrix}$, without loss of generality (the label $k$ might be changed), by \cite[Lemma 2.1]{KXZ20}, Lemma \ref{ARC} and Eliasson's KAM scheme, there exists $Z=\begin{bmatrix}z_1&z_2\\\overline{z_2}&\overline{z_1}\end{bmatrix}\in C^\omega(2\T,\mathrm{SU}(1,1))$ 
such that \begin{equation*}
Z(\omega+\alpha)MA_{E_0,\delta}(\omega)M^{-1}[Z(\omega)]^{-1}=\begin{bmatrix}e^{ik\alpha/2}&0\\0&e^{-ik\alpha/2}\end{bmatrix}.
\end{equation*}
Let $B=\begin{bmatrix}e^{-ik\omega/2}&0\\0&e^{ik\omega/2}\end{bmatrix}Z$, then
\begin{equation}\label{eq.conjugation1}B(\omega+\alpha)MA_{E_0,\delta_0}(\omega)M^{-1}[B(\omega)]^{-1}=id.\end{equation}
Let $\mu=(\Delta, \delta)$ be small, and let $P\in\mathrm{su}(1,1)$ be such that 
$$B(\omega+\alpha)MA_{E_0+\Delta,\delta_0+\delta}M^{-1}B(\omega)=id+P(\omega,\mu),$$
then $P(\cdot,\mu)$ can be computed explicitly as the following:
\begin{equation*}
\begin{aligned}
&P_{11}=i\frac{\Delta-\delta b}{2}(|b_{1}|^2+|b_{2}|^2-\overline{b_1}b_2-b_1\overline{b_2}),\\
&P_{12}=i\frac{\Delta-\delta b}{2}(b_1-b_2)^2,
\end{aligned}
\end{equation*}
where $b_1=z_1e^{-ik\omega/2},b_2=z_2e^{-ik\omega/2}$. By Lemma \ref{lem.nonzero}, the boundaries $(\Delta(\delta),\delta)$ are solutions to the quadratic equation $\det[P](\Delta,\delta)=0.$ Equivalently,
\begin{equation}\label{eq.detPzero}
(\hat{h}_0\Delta-\hat{(bh)}_{0}\delta)^2-|\hat{g}_{k}\Delta-\hat{(bg)}_{k}\delta|^2=0,
\end{equation}
where $h=|z_1|^2+|z_2|^2-\overline{z_1}z_2-z_1\overline{z_2}, g=(z_1-z_2)^2.$ The simple observation that tongue boundaries always exist implies that  the discriminant of \eqref{eq.detPzero} must be nonnegative, that is,
\begin{equation}\label{eq.criticalCondition}
(\Re(\hat{g}_k)\overline{\hat{(bg)}_{k}})^2-|\hat{g}_k\hat{(bg)}_{k}|^2+\hat{(bh)}_0^2|\hat{g}_{k}|^2+\hat{h}_0^2|\hat{(bg)}_{k}|^2-2\Re(\hat{g}_k\overline{\hat{(bg)}_{k}})\hat{h}_0\hat{(bh)}_0\geq 0.
\end{equation}
Let $S_k=\{b\in C^\omega(\T,\bbR):\text{the LHS of }\eqref{eq.criticalCondition} \text{ is nonzero}\}$, then clearly $S_k$ is open. We need to show that $S_k$ is also dense. Let $f(\omega)\in C^{\omega}(\T,\bbR)$ be arbitrarily small and assume that $b$ is such that \eqref{eq.criticalCondition} equals zero. By \eqref{eq.conjugation1}, we have 
\begin{equation}\label{eq.conjugation2}
B(\omega+\alpha)M\frac{1}{a(\omega)}\begin{bmatrix}E_0-\delta_0 b(\omega)-\delta_0 f(\omega)&-1\\a^2(\omega)&0\end{bmatrix}M^{-1}[B(\omega)]^{-1}=id+Q(\omega),
\end{equation}
where \begin{equation}\label{eq.newPerturb}
Q(\omega)=\frac{\delta_0f}{2}\begin{bmatrix}
-ih&-ige^{-ik\omega}\\
i\overline{g}e^{ik\omega}&ih
\end{bmatrix}.
\end{equation}
By a standard KAM scheme (see \cite{HA09},\cite{LYZZ}), there exists $Y\in C^{\omega}(\T,\mathrm{su}(1,1))$ such that $e^{Y}$ conjugates $id+Q(\omega)$ to a constant. Moreover, in the first inductive step, the linear part of $Y$ in $Q$ is determined by solving cohomological equations. This implies that the linear part of $Y$ has Fourier coefficients as linear combinations of those of $Q$, and therefore of $f$. As $\Vert f\Vert_r$ is sufficiently small for some $r>0$, the linear part of $Y$ dominates. Since $f$ is independent of $b$ and $Z$, there exists $f$ such that the LHS of \eqref{eq.criticalCondition} is nonzero. This proves the denseness of $S_k.$ 
\end{proof}


We are ready to prove Theorem \ref{thm.main7}

\begin{proof}[Proof of Theorem \ref{thm.main7}]
For any $k$, let $\mathcal{R}_k$ be the $k$-th resonance tongue. By analyticity of $\mathcal{R}_k$'s boundaries (Theorem \ref{thm.JacobiBound}),  there are only finitely many values of $\delta$ such that boundaries coincide at these points for any fixed $(a,b)$. Excluding up to countably many values of $\delta$, no boundaries coincide. For any fixed $\delta_0$, by Theorem \ref{thm.transver}, the set of $b\in C^{\omega}(\T,\bbR)$ such that all collapsed gaps open up linearly is generic. Putting these parameters exclusion together proves the theorem.
\end{proof}

\begin{remark}
A transversality argument in each subcritical component is necessary, since it is unknown if the tongue boundaries are analytic throughout the whole parameter space. It is unclear if there is a transition to a supercritical component between two consecutive subcritical components. 
\end{remark}

\subsection{CMV case:} To be consistent with notations, let $z=e^{iE}$ be the spectral parameter of the Szeg\H{o} cocycle and $v=\lambda e^{i\delta h}$ be the sampling function in the following context. The following result of \cite{LDZ22} shows analyticity of tongue boundaries for CMV matrices in the perturbative region.
\begin{theorem}\label{thm.CMVBoundary}
Let $\lambda\in(0,1),\delta\in\bbR,r>0$,  $\alpha \in DC$, $h\in C_{r}^{\omega}(\T^{d},\bbR)$. Consider the two-sided CMV matrix with quasi-periodic Verblunsky coefficients generated by the sampling function
$$
v(\omega)=\lambda e^{i \delta h(\omega)},
$$
and denote its spectrum by  $\Sigma_{v}$.
Then there exists $\epsilon_{1} = \epsilon_{1}(\alpha,\|h\|_{r},\lambda) > 0$ such that if $|\delta_{0}|<\epsilon_{1}$ and the pair $(E_{0},\delta_{0})$ lies on a tongue boundary, we have the following:

(i) If $E_{0}$ is an endpoint of an open spectral gap of $\Sigma_{v}$, then the tongue boundary $E=E(\delta)$ such that $E_{0}=E(\delta_{0})$ is real analytic in a neighborhood of $\delta_{0}$.

(ii) If $E_{0}$ lies at a collapsed spectral gap of $\Sigma_v$, then the two tongue boundaries $E_{\pm}=E_{\pm}(\delta)$ with $E_{\pm}(\delta_{0})=E_{0}$ are real analytic in a neighborhood of $\delta_{0}$.
\end{theorem}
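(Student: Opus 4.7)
My plan is to mirror the proof of Theorem~\ref{thm.JacobiBound}, running the argument in $\mathrm{SU}(1,1)$ with the Szeg\H{o} cocycle $S^{(z)}_v$ in place of the Jacobi cocycle. The smallness hypothesis $|\delta_0| < \epsilon_1$ is exactly what puts us in the perturbative subcritical regime: at $\delta = 0$ the Verblunsky coefficient reduces to the constant $\lambda \in (0,1)$, so the Szeg\H{o} cocycle is constant and (on $\partial\bbD$) elliptic, hence subcritical, and analytic dependence keeps it subcritical for all small $|\delta_0|$. The tongue-boundary condition $2\rho = k\alpha \bmod \bbZ$ provides the resonance on which everything is built.

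First, I would invoke the $\mathrm{SU}(1,1)$-analog of Lemma~\ref{ARC} to produce an analytic $Z_1 \in C^{\omega}(2\T, \mathrm{SU}(1,1))$ conjugating $(\alpha, S^{(z_0)}_{v_{\delta_0}})$ to a cocycle close to a constant rotation. Since we sit on a tongue boundary, Eliasson's KAM reducibility scheme, combined with a diagonal factor $\mathrm{diag}(e^{-ik\omega/2}, e^{ik\omega/2})$ that absorbs the resonance, then yields $B = Z_2 Z_1 \in C^{\omega}(2\T, \mathrm{SU}(1,1))$ such that
$$
B(\omega+\alpha)\, S^{(z_0)}_{v_{\delta_0}}(\omega)\, [B(\omega)]^{-1} = \begin{bmatrix} 1 & c \\ 0 & 1 \end{bmatrix}.
$$
The scalar $c$ records precisely the dichotomy in the statement: $c \neq 0$ corresponds to $E_0$ being an endpoint of an open gap (case (i)), while $c = 0$ corresponds to a collapsed gap (case (ii)).

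Next I would perturb $(E_0, \delta_0)$ to $(E_0 + \Delta, \delta_0 + \delta)$, conjugate with the \emph{same} $B$, and expand
$$
B(\omega+\alpha)\, S^{(z_0 e^{i\Delta})}_{v_{\delta_0+\delta}}(\omega)\, [B(\omega)]^{-1} = \begin{bmatrix} 1 & c \\ 0 & 1 \end{bmatrix}\bigl(id + P(\omega, \Delta, \delta) + O_2(\Delta, \delta)\bigr)
$$
using the partial derivatives of $S^{(z)}_v$ in $z$ and $v$ at $(z_0, \lambda e^{i\delta_0 h})$. Both the spectral-parameter perturbation (through the $z^{1/2}$ factor and the upper-left entry) and the coupling perturbation (through $v = \lambda e^{i\delta h}$) are analytic, so $P$ is given explicitly as a combination of the entries of $B$ and of $h$, linear in $(\Delta,\delta)$. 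Hypotheses (H1) and (H3) of Lemma~\ref{twoCases} are immediate from subcriticality and the monotonicity of the rotation number along $\partial\bbD$, and (H2) is the tongue-boundary condition itself, so Lemma~\ref{twoCases} will apply once the non-degeneracies of $[P]$ are checked.

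The genuine technical point, and where I expect the real work, is exactly this non-degeneracy check: in case (i) that the $\Delta$-coefficient of $[P_{21}]$ is nonzero, and in case (ii) that the $\Delta^2$-coefficient of $\det[P]$ is nonzero. These are the analogs of $[b_{22}^2] \neq 0$ and $-[b_{12}^2][b_{22}^2]+[b_{12}b_{22}]^2 \neq 0$ in the proof of Theorem~\ref{thm.JacobiBound}, and I would handle them by the same style of argument: assume vanishing of the relevant average, deduce from analyticity of $B$ and minimality of the irrational rotation a pointwise identity among the entries of $B$, and contradict the $\mathrm{SU}(1,1)$ constraint $|b_{11}|^2 - |b_{12}|^2 = 1$ together with the base conjugation identity defining $B$. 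Once these two non-degeneracies are in place, Lemma~\ref{twoCases}(1) produces the analytic tongue boundary in case (i) and Lemma~\ref{twoCases}(2) produces the two analytic tongue boundaries $E_{\pm}(\delta)$ in case (ii), completing the proof.
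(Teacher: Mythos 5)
The paper does not actually prove this theorem --- it is quoted from \cite{LDZ22} --- and your outline follows essentially the same route as that reference and as the paper's own Jacobi analogue (Theorem~\ref{thm.JacobiBound}): reducibility at the resonance via Eliasson's scheme, first-order expansion of the conjugated cocycle in $(\Delta,\delta)$, the criterion of Lemma~\ref{twoCases}, and non-degeneracy of the averaged linear term ruled in by the $\mathrm{SU}(1,1)$ normalization $|b_1|^2-|b_2|^2=1$ together with the conjugation identity. One small remark: since $|\delta_0|<\epsilon_1$ places you in the perturbative regime (the Szeg\H{o} cocycle is already an analytic small perturbation of the constant cocycle with $v\equiv\lambda$), the detour through Lemma~\ref{ARC} is unnecessary --- Eliasson's KAM applies directly, which is consistent with $\epsilon_1$ depending only on $(\alpha,\|h\|_r,\lambda)$.
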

Then the proof of analyticity in the subcritical region follows directly from a global to local reduction Lemma \ref{ARC} and Theorem \ref{thm.CMVBoundary}.

It remains to show the transversality of tongue boundaries where a gap collapses. The following result of \cite{LDZ22} gives the proof at $\delta_0=0.$ 

\begin{theorem}\label{thm.CMVTrans}
Denote the tongue boundaries of the gap with label $k\neq 0$ by $E_{k}^{\pm}(\delta)$. Then we have $\frac{dE^{+}_{k}}{d\delta}(0)\neq \frac{dE^{-}_{k}}{d\delta}(0)$ if and only if
$\hat{h}_{k}\neq 0$.
\end{theorem}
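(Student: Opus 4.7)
The plan is to exploit the special structure at $\delta_0=0$: the Verblunsky coefficients reduce to the constant $\lambda$, so $A_{E_0,0}$ is $\omega$-independent and, since $(E_0,0)\in\mathcal{R}_k$, elliptic with eigenvalues $e^{\pm ik\alpha/2}$. Diagonalizing by a constant $Z\in\mathrm{SU}(1,1)$ and setting $B(\omega)=\mathrm{diag}(e^{-ik\omega/2},e^{ik\omega/2})\,Z\in C^{\omega}(2\T,\mathrm{SU}(1,1))$ gives the exact reducibility $B(\omega+\alpha)A_{E_0,0}B(\omega)^{-1}=id$. Perturbing to $(E_0+\Delta,\delta h)$ and linearizing, I would write
\[
B(\omega+\alpha)A_{E_0+\Delta,\delta h}(\omega)B(\omega)^{-1}=id+P(\omega)+O(|(\Delta,\delta)|^{2}),
\]
with $P(\omega)=\Delta\,B(\omega+\alpha)D_\Delta B(\omega)^{-1}+\delta\,h(\omega)\,B(\omega+\alpha)H\,B(\omega)^{-1}$, where $D_\Delta:=\partial_E A|_{E_0,0}$ and $H$ is defined by $\partial_\delta A|_{\delta=0}=h(\omega)H$, both constant matrices. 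By the $c=0$ branch of Lemma~\ref{twoCases} (equivalently Lemma~\ref{lem.nonzero}), the tongue-boundary condition is $\det[P]=0$.

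Next I would average $P$ over $\omega\in\T$. The twists $e^{\pm ik\omega}$ coming from $B(\omega+\alpha)\,\cdot\,B(\omega)^{-1}$ ensure that the off-diagonal entries of $[P]$ are supported only on the $k$-th Fourier mode of $h$, namely $[P]_{12}\propto e^{-ik\alpha/2}\hat h_k\,\delta$ and $[P]_{21}\propto e^{ik\alpha/2}\overline{\hat h_k}\,\delta$, while the diagonal entries collect contributions only from $\Delta$ and $\delta\hat h_0$. Writing the constant conjugated matrices in $\mathfrak{su}(1,1)$ form
\[
Z D_\Delta Z^{-1}=\begin{bmatrix}it_D & \bar s_D \\ s_D & -it_D\end{bmatrix},\qquad Z H Z^{-1}=\begin{bmatrix}it_H & \bar s_H \\ s_H & -it_H\end{bmatrix},
\]
the purely imaginary diagonals collapse the product $[P]_{11}[P]_{22}$ into a single perfect square, and $\det[P]=0$ reduces to
\[
(t_D\Delta+t_H\hat h_0\,\delta)^{2}=|s_H|^{2}|\hat h_k|^{2}\,\delta^{2}.
\]
Solving yields $\frac{dE^\pm_k}{d\delta}(0)=\bigl(-t_H\hat h_0\pm|s_H||\hat h_k|\bigr)/t_D$, so the slope difference equals $2|s_H||\hat h_k|/|t_D|$, which is nonzero iff $\hat h_k\neq 0$, modulo the nondegeneracies $t_D,s_H\neq 0$.

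The main obstacle is establishing these two nondegeneracies, which encode that the problem is genuinely two-parameter at $(E_0,0)$. The nonvanishing of $t_D$ is equivalent to the strict monotonicity $\partial_E(2\rho)(E_0,0)\neq 0$, classical for the rotation number of the $\lambda$-constant CMV matrix in the interior of its spectral arc. The nonvanishing of $s_H$ reduces to $[A_{E_0,0},H]\neq 0$; a direct $2\times 2$ commutator computation shows this commutator is a nonzero scalar multiple of the matrix $\begin{bmatrix}2\lambda & z_0-1\\ z_0-1 & -2\lambda\end{bmatrix}$, which is nonzero for every $\lambda\in(0,1)$, as assumed. Combining these observations completes the \emph{if and only if} claim.
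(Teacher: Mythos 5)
The paper does not prove this theorem itself; it is imported from \cite{LDZ22}. Your proposal, however, mirrors exactly the strategy the paper carries out for the Jacobi analogue (Theorem~\ref{thm.transver}): exact constant reducibility at the base point, a twist $B(\omega)=\mathrm{diag}(e^{-ik\omega/2},e^{ik\omega/2})Z$, linearization in $(\Delta,\delta)$, averaging, and reading the tongue boundaries off $\det[P]=0$. So the approach is the intended one, and your Fourier-localization observation (the $e^{\pm ik\omega}$ twist makes $[P]_{12},[P]_{21}$ see only $\hat h_{\pm k}$ while the diagonal sees only $\hat h_0$ and $\Delta$) is the correct mechanism producing the slope gap $\propto|\hat h_k|$.

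Two small points are worth tightening. First, $D_\Delta=\partial_E A|_{(E_0,0)}$ is tangent to $\mathrm{SU}(1,1)$ at $A_{E_0,0}$, not at the identity, so the object actually living in $\mathfrak{su}(1,1)$ (and whose entries you should name $t_D,s_D$) is $A_{E_0,0}^{-1}D_\Delta$ conjugated by $Z$, not $ZD_\Delta Z^{-1}$; likewise for $H$. This is harmless because $A_0^{-1}$ is constant and commutes with $A_0$, so the ``$s_H\neq 0 \Leftrightarrow [A_0,H]\neq 0$'' equivalence you use is unchanged, but the normalization should be stated correctly. Second, since the statement is an ``if and only if,'' the nondegeneracies $t_D\neq 0$ and $s_H\neq 0$ are not side remarks but part of the proof, and you do address both: $t_D\neq 0$ by strict monotonicity of $\rho$ on the interior of the free spectral arc, and $s_H\neq 0$ by the explicit commutator $[A_{E_0,0},H]\propto\begin{bmatrix}2\lambda & z_0-1\\ z_0-1 & -2\lambda\end{bmatrix}\neq 0$ for $\lambda\in(0,1)$ (note this also implicitly uses $k\neq 0$, which guarantees $e^{ik\alpha/2}\neq e^{-ik\alpha/2}$ so that the eigenvalues of $A_{E_0,0}$ are distinct and ``commutes $\Leftrightarrow$ diagonal in the same basis'' applies). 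With those two points made explicit, the argument is complete and consistent with the paper's framework.
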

Following the same argument as in the proof of Theorem \ref{thm.transver}, the above conclusion can be generalized to the following:

\begin{coro}\label{cor.CMVTrans}
Assume that $(E_0,\delta_0)$ is such that $(\alpha,A_{E_0,\delta_0})$ is subcritical and $E_0=\mathcal{R}_k\cap \{(E,\delta_0)\}$.
There exists an open and dense subset of $h\in C^{\omega}(\T,\bbR)$ for which  $\frac{dE^{+}_{k}}{d\delta}(\delta_0)\neq \frac{dE^{-}_{k}}{d\delta}(\delta_0)$.
\end{coro}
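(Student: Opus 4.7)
The plan is to adapt the proof of Theorem~\ref{thm.transver} to the $\mathrm{SU}(1,1)$ setting of Szeg\H{o} cocycles. I would begin by invoking the $\mathrm{SU}(1,1)$ analogue of Lemma~\ref{ARC} and then Eliasson's KAM scheme; together with the resonance condition $2\rho(\alpha, A_{E_0,\delta_0}) = k\alpha$, these yield a conjugation $B \in C^{\omega}(2\T, \mathrm{SU}(1,1))$ with
\begin{equation*}
B(\omega+\alpha) A_{E_0,\delta_0}(\omega) [B(\omega)]^{-1} = id,
\end{equation*}
where the diagonal factor $\mathrm{diag}(e^{-ik\omega/2}, e^{ik\omega/2})$ is absorbed into $B$ exactly as in \eqref{eq.conjugation1}.

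Next, I would perturb the parameters to $(E_0 + \Delta, \delta_0 + \delta)$ and expand
\begin{equation*}
B(\omega+\alpha) A_{E_0+\Delta,\delta_0+\delta}(\omega) [B(\omega)]^{-1} = id + P(\omega;\Delta,\delta) + O_2(\Delta,\delta),
\end{equation*}
obtaining an $\mathrm{su}(1,1)$-valued $P$ that is linear in $(\Delta,\delta)$ with coefficients given by explicit trigonometric expressions in the entries of $B$ and $h$. By Lemma~\ref{lem.nonzero}, the tongue boundaries are cut out by $\det[P](\Delta,\delta)=0$, a real quadratic form in $(\Delta,\delta)$, and the transversality $\frac{dE_k^+}{d\delta}(\delta_0) \neq \frac{dE_k^-}{d\delta}(\delta_0)$ is equivalent to strict positivity of its discriminant, producing an inequality of the same shape as \eqref{eq.criticalCondition} with $b$ replaced by $h$ and the relevant Fourier data recomputed for the Szeg\H{o} cocycle.

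Let $S_k \subset C^{\omega}(\T,\bbR)$ denote the set of sampling functions for which this discriminant is strictly positive. Openness of $S_k$ is immediate from analytic dependence of $B$, and hence of $P$, on $h$. For denseness, given $h \notin S_k$ I would perturb $h \mapsto h+f$ by an arbitrarily small analytic $f$ and conjugate the new cocycle by the same $B$, obtaining a near-identity $id + Q(\omega)$ whose entries are linear in $f$ with coefficients depending only on $B$, $\lambda$, and $\delta_0$, and independent of $h$, in direct analogy with \eqref{eq.newPerturb}. One step of the standard KAM scheme yields a near-identity conjugation $e^{Y}$ whose $Y$ is linear in $Q$ to leading order, so that the change in the discriminant is dominated by a nontrivial linear functional of the Fourier coefficients of $f$; adjusting the resonant mode $\hat f_k$, precisely as in Theorem~\ref{thm.CMVTrans}, pushes the discriminant off zero.

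The main obstacle, just as in the Jacobi case, is ruling out the possibility that the discriminant vanishes identically in $h$ for structural reasons. This requires verifying that the entries of $B$ cannot degenerate: were the discriminant zero for every perturbation $f$, one could derive linear relations among the entries of $B$ inconsistent with $B(\omega) \in \mathrm{SU}(1,1)$ together with $B(\omega+\alpha) A_{E_0,\delta_0}(\omega) [B(\omega)]^{-1} = id$. Analyticity of $B$ and minimality of the rotation by $\alpha$ then deliver the desired contradiction, completing the argument.
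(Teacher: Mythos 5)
Your proposal is correct and follows essentially the same route as the paper, which proves the corollary precisely by repeating the argument of Theorem~\ref{thm.transver} in the $\mathrm{SU}(1,1)$/Szeg\H{o} setting: reduction via the $\mathrm{SU}(1,1)$ version of Lemma~\ref{ARC} and Eliasson's scheme, first-order expansion and Lemma~\ref{lem.nonzero} to reduce transversality to strict positivity of the discriminant of $\det[P](\Delta,\delta)=0$, and denseness via a small free perturbation $f$ handled by one KAM step as in \eqref{eq.newPerturb}. The only caveat is your phrase that the coefficients of $Q$ are ``independent of $h$''; they do depend on $h$ through $B$, and the point (as in the paper) is rather that $f$ is a free parameter independent of $h$ and $B$, which is what your argument actually uses.
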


\begin{proof}[Proof of Theorem \ref{thm.main8}] 
For any $k$, let $\mathcal{R}_k$ be the $k$-th resonance tongue. By the analyticity of $\mathcal{R}_k$'s boundaries,  there are only finitely many values of $\delta$ such that boundaries coincide at these points for any fixed $h$. Excluding up to countably many values of $\delta$, no boundaries coincide. For any fixed $\delta_0$, by Corollary \ref{cor.CMVTrans}, the set of $h\in C^{\omega}(\T,\bbR)$ such that all collapsed gaps open up linearly is generic. Putting these parameters exclusion together proves the theorem.
\end{proof}

\section*{Acknowledgement}

We are grateful to Barry Simon for helpful conversations.

\end{document}